\newtheorem{thm}{Theorem}
\theoremstyle{definition}
\newtheorem{lem}{Lemma}
\newtheorem{dfn}{Definition}
\newtheorem{crl}{Corollary}
\newtheorem{cnj}{Conjecture}
\newcommand{\N}{\mathbb{N}}
\newcommand{\R}{\mathbb{R}}
\newcommand{\ord}{\text{ord}}
\newcommand{\scr}[1]{\mathscr{#1}}
\author{Cole Hugelmeyer} 
\title{A Representation Theorem for Knots and a Generalization of the Fundamental Theorem of Finite Type Invariants}
\begin{document} 
\maketitle 

\abstract{We provide a way to produce knots in $S^3$ from signed chord diagrams, and prove that every knot can be produced in this way. Using these diagrams, we generalize the fundamental theorem of finite type invariants. We also provide moves for the diagrams so that any two diagrams for the same knot are connected by a sequence of moves.}

\section{Introduction}
The first part of this paper is devoted to describing a combinatorial method of representing knots with signed chord diagrams. Roughly speaking, we produce a knot where the chords in the chord diagram correspond to clasps in the knot, and the clasps lie on top of each other in an order which is compatible with the cyclic order of the diagram. Our first main result, Theorem \ref{rep}, is that all knots can be represented by these diagrams.

After seeing some examples of these diagrams, we will use them to generalize the fundamental theorem of finite type invariants. The fundamental theorem of finite type invariants gives a linear presentation for the space $\scr{V}_n/\scr{V}_{n-1}$ of rank $n$ finite type invariants modulo the subspace of rank $n-1$ finite type invariants. We generalize this result by giving a linear presentation for the space of finite type invariants $\scr{V}_n$  directly. This presentation extends the standard presentation for $\scr{V}_n/\scr{V}_{n-1}$ by adding higher order terms. This is our second main result, Theorem \ref{gen}.

We will give two proofs for the fact that all knots can be represented by signed chord diagrams. One which is geometric and visual, and one which is more formal and combinatorial. Using the former method, we will include the addition of bands on the knot into the representation so that concepts like unknotting number and band surgery can be shown to behave nicely with respect to signed chord diagrams. Using the latter method, we will give a formal procedure for transforming a knot diagram into a signed chord diagram, and this will allow us to analyze how Reidemeister moves in a knot diagram affect the resulting signed chord diagram. We will describe a set of moves on signed chord diagrams which preserve the knot type. Our final result, Theorem \ref{moves}, states that any two signed chord diagrams for the same knot   are related by a sequence of moves.

\section{The Construction}

We begin by describing a way of producing banded unknots from chord diagrams. 

\begin{dfn}
A chord diagram is a cyclically ordered finite set $D$ equipped with a perfect matching $\tau: D\to D$, an involution with no fixed points. The order of a chord diagram, $\ord(D)$, is defined to be $|D/\tau|$. A subdiagram of a chord diagram is a subset $D'\subseteq D$ which is closed under $\tau$ and equipped with the same cyclic ordering as $D$. We will think of a chord diagram as a drawing in a disk with points on the boundary representing the elements of $D$, and lines between the points representing the elements of $D/\tau$, which we call chords.
\end{dfn}

\begin{dfn}
A banded knot is a smooth knot $\gamma: S^1\to S^3$ equipped with a finite set of smoothly embedded bands $\beta_i: [0,1]^2\to S^3$, $i\in F$, such that $$\{\beta_i([0,1]\times\{s\}): s\in \{0,1\}, i\in F\}$$ is a set of disjoint intervals in the image of $\gamma$ whose union is $$\gamma(S^1)\cap \bigcup_{i \in F} \beta_i([0,1]^2).$$ A banded knot is said to be orientable if for all $i$, the point $\beta_i(x,s)$ moves in the positive direction on $\gamma(S^1)$ exactly when either $x$ increases and $s = 0$, or $x$ decreases and $s = 1$.

Let $\rho: [0,1]^2\to [0,1]^2$ denote an orientation preserving 180 degree rotation of the square. We will say two banded knots $(\gamma,\{\beta_i\}_{i\in F})$ and $(\gamma',\{\beta'_i\}_{i\in F'})$  are isotopic if there is an orientation preserving diffeomorphism $\phi: S^3\to S^3$, a function $s:F\to \{0,1\}$, a bijection $\sigma:F\to F'$, and an orientation preserving diffeomorphism $p: S^1\to S^1$ such that $\phi\gamma p = \gamma'$, and $\phi\beta_{i}\rho^{s(i)} = \beta'_{\sigma(i)}$ for all $i\in F$.
\end{dfn}

\begin{dfn}
Given a chord diagram $D$, we construct an orientable banded unknot, $B(D)$, which is well-defined up to isotopy, in the following way. First, we select a smooth unknot $\gamma: S^1\to S^3$ and we fix a smooth map $\phi: D^2\times S^1\to S^3$ such that $$\phi|_{\text{Int}(D^2\times S^1)}: \text{Int}(D^2\times S^1)\to S^3\setminus \gamma(S^1)$$ is an orientation reversing diffeomorphism, and such that there exists an orientation preserving diffeomorphism $\iota: S^1\to \partial D^2$ so that for all $x,y\in S^1$, we have $\phi(\iota(x),y) = \gamma(x)$.  Now, we choose a injective function $f: D\to S^1$ so that the cyclic ordering on $D$ induced by the orientation on $S^1$ is the same as the cyclic ordering with which $D$ was already equipped. Now we choose a set of disjoint closed intervals in $S^1$, $\{I_i\}_{i\in D}$, so that $f(i)$ is in the interior of $I_i$. We now select a base point $*\in S^1$ which is not in the image of $f$. Moving in the positive direction in $S^1$ from the base point induces a linear ordering ``$<_*$'' on $D$ which is consistent with the cyclic ordering. We then let $D_* = \{x\in D: x <_* \tau(x)\}$.  Now, for each $i\in D_*$, we select a smooth orientation preserving embedding $q_i: [0,1]^2\to D^2$ such that $q_i([0,1]^2)\cap\partial D^2 = \iota(I_i)\cup \iota(I_{\tau(i)})$ and $q_i([0,1]\times \{0\}) = \iota(I_i) $, and $q_i([0,1]\times \{1\}) = \iota(I_{\tau(i)})$. Finally, we produce a banded knot $B(D) := (\gamma, \{\beta_i\}_{i\in D_*})$, where for any $i\in D_*$ and $x\in [0,1]^2$, we have $\beta_i(x) := \phi(q_i(x), f(i))$. 

\begin{figure}[h]
\caption{A chord diagram $D$ and the banded unknot $B(D)$.}
\centering
\includegraphics[scale = 0.6]{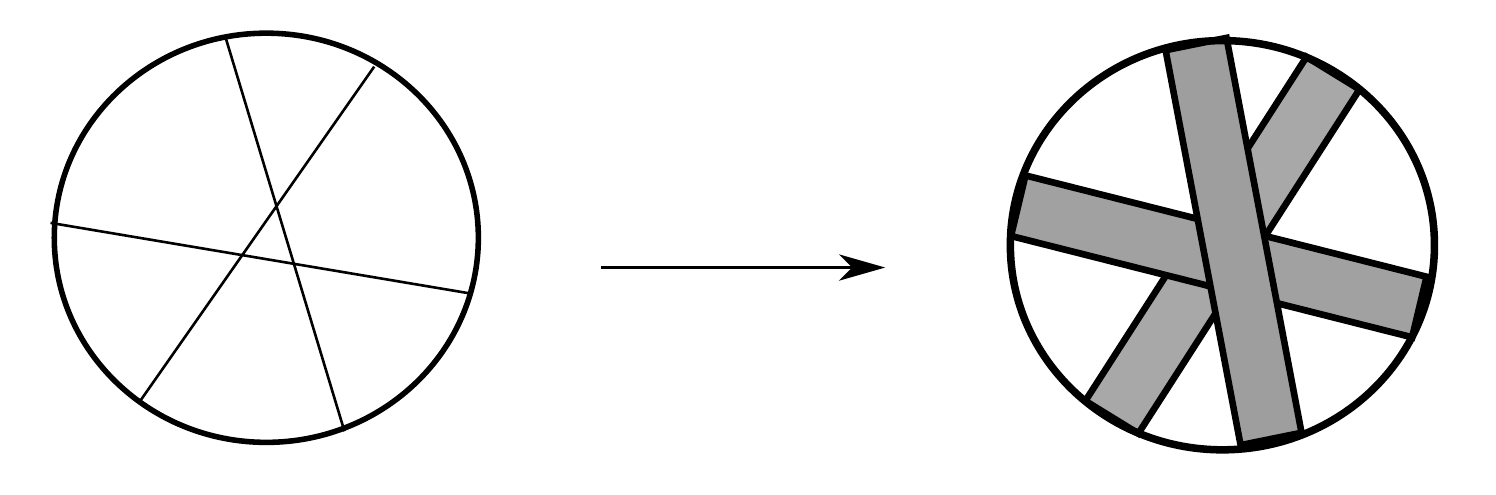}
\end{figure}

We have now constructed a banded knot from a chord diagram, but our construction depends on a choice of base point. For this definition to make sense, we need the following lemma. 
\end{dfn}

\begin{lem}
The isotopy type of $B(D)$ does not depend on the choice of base point $*\in S^1$. 
\end{lem}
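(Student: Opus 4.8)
The plan is to understand precisely how the construction $B(D)$ depends on the choice of base point, and then to show that moving the base point across a single element of $D$ produces an isotopic banded knot. Since any two base points differ by a finite sequence of such elementary crossings (each base point lies in one of the finitely many arcs of $S^1 \setminus f(D)$, and adjacent arcs are separated by one point $f(i)$), it suffices to handle the case where two base points $*$ and $*'$ are separated by exactly one point $f(j)$, with no other element of $D$ between them.

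First I would isolate what actually changes when the base point moves past $f(j)$. The base point enters the construction only through the linear order $<_*$ and the resulting set $D_* = \{x : x <_* \tau(x)\}$, which selects one endpoint of each chord as the ``bottom'' end where the band $\beta_i$ is parametrized to start (i.e. $q_i([0,1]\times\{0\}) = \iota(I_i)$). Crossing $f(j)$ changes the linear order only in the relative position of $j$: whereas before $j$ was the largest element, it becomes the smallest (or vice versa). The key observation is that this swaps whether $j$ or $\tau(j)$ belongs to $D_*$: if $j \in D_*$ under $<_*$, then under $<_{*'}$ we instead have $\tau(j) \in D_{*'}$. For every chord other than the one containing $j$, membership in $D_*$ is unchanged, because the relative order of its two endpoints is unaffected by where we cut the circle (the order of any two elements both different from $j$ is preserved, and for a pair involving $j$ the comparison simply flips).

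Next I would produce the isotopy. For the chords unaffected by the move, the bands are literally unchanged up to the allowed reparametrizations, so I can take $\phi = \mathrm{id}$ on those. For the single chord $\{j,\tau(j)\}$, the two constructions differ only in which endpoint is labelled the start of the band: the band $\beta_j$ of the first construction and the band $\beta_{\tau(j)}$ of the second trace out the same embedded square in $S^3$, but with the roles of the two boundary intervals $\iota(I_j)$ and $\iota(I_{\tau(j)})$ interchanged. This interchange is exactly the orientation-preserving $180$-degree rotation $\rho$ of the square that appears in the definition of isotopy of banded knots. Concretely, I would verify that $q_{\tau(j)}$ and $q_j \circ \rho$ can be chosen to have the same image with matching boundary data, so that setting $s(j) = 1$ (and $s(i) = 0$ for all other $i$), with $\sigma$ the evident bijection $D_* \to D_{*'}$ and $\phi = \mathrm{id}_{S^3}$, $p = \mathrm{id}_{S^1}$, exhibits the two banded knots as isotopic in the precise sense of the definition.

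The main obstacle I anticipate is not the combinatorial bookkeeping but checking that the rotation $\rho$ on the square genuinely matches the geometric swap of the two band endpoints in $S^3$ via the map $\phi$, and in particular that it is orientation-preserving and respects the orientability condition on the banded knot. I would need to confirm that rotating the square by $180^\circ$ exchanges $[0,1]\times\{0\}$ with $[0,1]\times\{1\}$ while reversing the direction along each, which is consistent with how $\gamma$ is traversed at the two clasp intervals, so that the orientability hypothesis is preserved. Once this compatibility is pinned down, the flexibility in the definition of $B(D)$ (it is only well-defined up to isotopy, so all the auxiliary choices of $f$, $\{I_i\}$, $q_i$, and $\phi$ may be matched up) lets the single rotation account entirely for the discrepancy, completing the argument.
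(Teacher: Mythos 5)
There is a genuine gap, and it sits exactly at the geometric heart of the lemma. Your reduction to a single elementary crossing and your combinatorial bookkeeping (crossing $f(j)$ swaps which of $j$, $\tau(j)$ lies in $D_*$, all other chords unaffected) are correct, but the central claim of your third paragraph is false: the band $\beta_j$ of the first construction and the band $\beta_{\tau(j)}$ of the second do \emph{not} trace out the same embedded square in $S^3$. By the definition of $B(D)$, the band for the chord $\{j,\tau(j)\}$ is $\beta_j(x)=\phi(q_j(x),f(j))$ when $j\in D_*$, but $\beta_{\tau(j)}(x)=\phi(q_{\tau(j)}(x),f(\tau(j)))$ when $\tau(j)\in D_{*'}$; the interior of the band lies in the meridian disk $\phi(D^2\times\{f(j)\})$ in the first case and in the different disk $\phi(D^2\times\{f(\tau(j))\})$ in the second (these are distinct since $f$ is injective and $\tau$ has no fixed points). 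The two bands attach to $\gamma$ along the same two intervals, but their interiors occupy different positions in $S^3$, so no reparameterization by $\rho$ can identify them. Nor does the flexibility in the auxiliary data rescue this: $f$ is required to induce the given cyclic order on $D$, so you cannot re-choose the second construction's $f$ to put $f(\tau(j))$ at the position of $f(j)$ while fixing the images of the other elements --- whenever other chords separate $j$ from $\tau(j)$, this would violate the cyclic-order requirement. Taking $\phi=\mathrm{id}_{S^3}$ with $s(j)=1$, as you propose, therefore does not exhibit an isotopy.

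What is missing is an actual ambient isotopy moving the band between the two fibers, which is what the paper constructs: arrange $q_j=q_{\tau(j)}\rho$, let $h:[0,1]\to S^1$ parameterize the arc from $f(j)$ to $f(\tau(j))$ that contains the base point, and slide the band through the solid torus via $\beta(x,t)=\phi(q_j(x),h(t))$. The point that then requires proof --- and the reason the choice of arc matters --- is that this slide creates no self-intersections of the banded knot: the fibers crossed during the slide are exactly those $f(k)$ with $\tau(j)<_* k$, and for such $k\in D_*$ the chord $\{k,\tau(k)\}$ does not cross $\{j,\tau(j)\}$, so one may choose $q_k([0,1]^2)$ disjoint from $q_j([0,1]^2)$ inside $D^2$, keeping the moving band disjoint from $\beta_k$ at every time $t$. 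This sliding argument and disjointness check are the substance of the lemma; the rotation $\rho$ only accounts for the relabeling of the parameter square once the slide is complete.
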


\begin{proof}
It suffices to show that if we move $*$ in the positive direction over a point $f(i)$, then the resulting banded knot is isotopic to what we had before. We will have two different base points $*$ and $*'$, where the former is just before $f(i)$ in $S^1$ and the latter is just after $f(i)$. This gives us two different sets $D_*$ and $D_{*'}$ as in the construction. The only difference in the ordering is that $i$ is the first element of $D$ with respect to $<_*$ but not with respect to $<_{*'}$. In particular, $i <_* \tau(i)$ but $\tau(i) <_{*'} i$. This means the only modification to $B(D)$ is that the band $\beta_{i}(x) = \phi(q_i(x), f(i))$ is replaced with a band $\beta_{\tau(i)}(x) = \phi(q_{\tau(i)}(x), f(\tau(i)))$. It can be arranged that $q_i = q_{\tau(i)}\rho$. Let $h: [0,1]\to S^1, h(1) = f(\tau(i)), h(0) = f(i)$ parameterize the interval in $S^1$ between $i$ and $\tau(i)$ that contains $*$. Furthermore, if $\tau(i) <_* j$ for some $j\in D_*$, we may select $q_j([0,1]^2)$ to be disjoint from $q_i([0,1]^2) = q_{\tau(i)}([0,1]^2)$. Putting all of this together, we can continuously perturb the band $\beta_i$ to the band $\beta_{\tau(i)}\rho$ via $\beta(x,t) = \phi(q_i(x), h(t))$, and this perturbation can be arranged to not induce self-intersections of the banded knot. Therefore, this perturbation induces an isotopy. 
\end{proof}

It should be noted at this point that it is critical for our definition of isotopy of banded knots to include 180 degree rotations of the parameterizations of the bands, because as can be seen from this lemma, there is no preferred edge for any of the bands in $B(D)$. The reason we have chosen $\phi|_{\text{int}(D^2\times S^1)}$ to be orientation reversing rather than orientation preserving is that it will make it easier to draw the banded knots that we will construct. 

\begin{dfn}\label{surgery}
Let $X = (\gamma,\{\beta_i\}_{i\in F})$ be an oriented banded knot. We describe two ways of modifying this knot at a band $i\in F$, which we call the $+1$ and $-1$ clasp surgery. Let $s \in \{1,-1\}$. First, we ``thicken'' our band. That is to say, we select an orientation preserving embedding $\theta: [0,1]^3\to S^3$ which intersects our banded knot exactly at $\beta_i$, and has $\beta_i(x) = \theta(x,1/2)$ for all $x\in [0,1]$. Let $I_0 = \theta([0,1]\times \{0\}\times\{1/2\})$ and $I_1 = \theta([0,1]\times \{1\}\times\{1/2\})$ be the two intervals in $\partial(\theta([0,1]^3))$ at which $\beta_i$ coincides with $\gamma$. Let $\eta_0,\eta_1:[0,1]\to [0,1]^3$ be given by $$\eta_0(t) = \left(t,3t\left(1-t\right),\frac{1}{2} + st\left(\frac{1}{2}-t\right)\left(1-t\right)\right)$$  $$\eta_1(t) = \left(1-t,1-3t\left(1-t\right),\frac{1}{2} + st\left(\frac{1}{2}-t\right)\left(1-t\right)\right)$$ We now modify our banded knot. We remove the band $\beta_i$, and we replace $I_0$ and $I_1$ with $\theta\eta_0([0,1])$ and $\theta\eta_1([0,1])$ respectively. We then smooth the corners. The resulting banded knot is called the $s$ clasp surgery of $X$ at $i$.
\end{dfn}

\begin{figure}[h]
\caption{Clasp surgery on a band.}
\centering
\includegraphics[scale = 0.7]{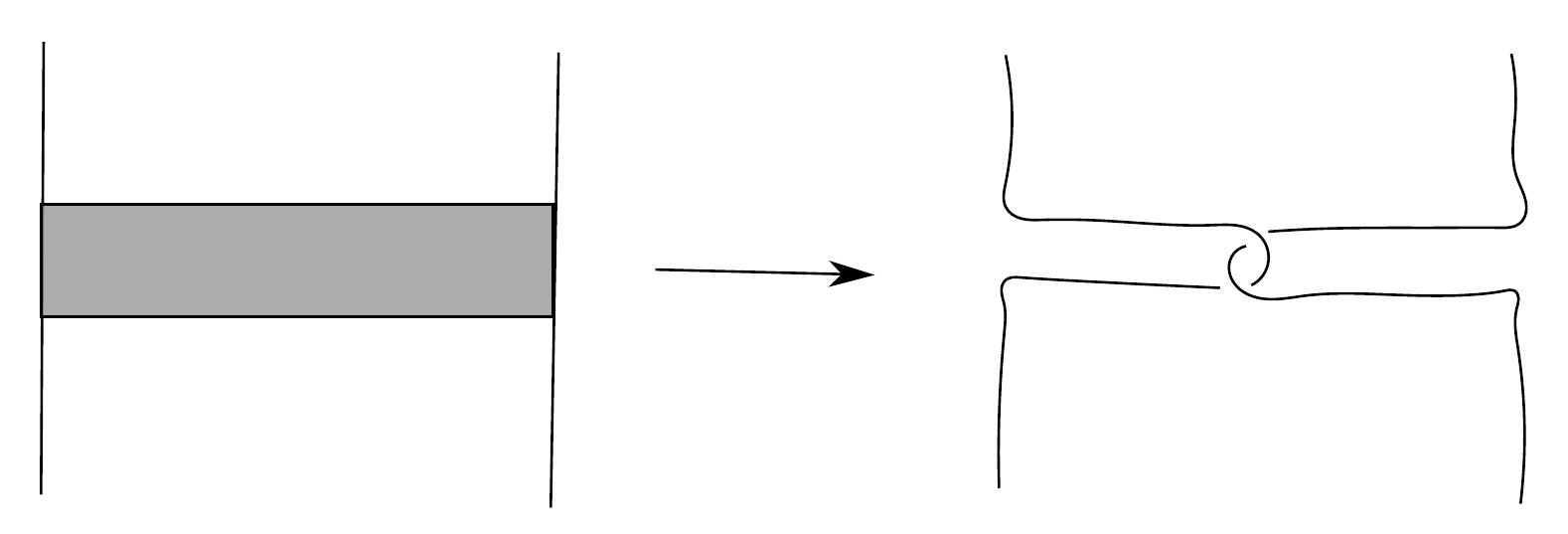}
\end{figure}

\begin{dfn}
A partially signed chord diagram is a cyclically ordered set $D$ with a perfect matching $\tau: D\to D$ and a function $s: D/\tau \to \{-1,0,1\}$. A signed chord diagram is a partially signed chord diagram for which $0$ is not in the image of $s$. Given a partially signed chord diagram, $D$, we define its geometric realization to be the oriented banded knot given by applying $s(i)$ clasp surgery on $B(D)$ for each $i\in D/\tau$ with $s(i)\neq 0$. The geometric realization of a partially signed chord diagram is denoted $\Gamma(D)$. Note that the geometric realization of $D$ is a knot if and only if $D$ is a signed chord diagram. 
\end{dfn}

We can now state our main theorem.

\begin{thm}\label{rep}
Every orientable banded knot is isotopic to the geometric realization of some partially signed chord diagram. 
\end{thm}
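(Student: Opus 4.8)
The plan is to reformulate the statement as a reduction to the standard ``stacked'' form $B(D_0)$ and then read off a signed chord diagram. Since each $s$ clasp surgery of Definition \ref{surgery} is reversible --- unclasping a clasp tangle returns the original band --- the geometric content we must establish is that every orientable banded knot $X=(\gamma,\{\beta_i\})$ is isotopic to the result of performing a collection of disjoint clasp surgeries on a banded unknot of the special form $B(D_0)$. Granting this, the theorem is immediate: if $X$ is obtained from $B(D_0)$ by $s_i$ clasp surgery on the bands indexed by some subset, then assigning the sign $s_i\in\{-1,1\}$ to the corresponding chords and the sign $0$ to all remaining chords produces a partially signed chord diagram $D$ whose geometric realization $\Gamma(D)$ is, by definition, exactly this banded knot. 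Thus the whole proof reduces to exhibiting $X$ as clasps applied to a stacked banded unknot.

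To produce such a presentation I would first fix a generic diagram of $X$, isotoping $\gamma$ so that it lies on a standard planar unknotted circle except for finitely many local features --- self-crossings of $\gamma$, self-crossings or twists of the bands $\beta_i$, and crossings of a band with $\gamma$ away from its feet --- and isotoping each band so that, away from these features, it is a flat untwisted rectangle with its two feet on the circle. The key local move is that any one such feature can be absorbed into a clasp: I would show that a crossing change (equivalently, a band twist, or an extra band--core crossing) is realized by a clasp surgery along a small auxiliary band connecting the two strands involved, so that after inserting finitely many such clasps the core becomes a genuine unknot and every band becomes a flat untwisted rectangle. Here the orientability hypothesis on $X$ is exactly what guarantees that each of these auxiliary clasps has a well-defined sign $s\in\{-1,1\}$ and that the modified object remains an orientable banded knot, matching the output of $\Gamma$.

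It remains to put the resulting unknot-with-flat-bands into the form $B(D_0)$. I would identify the complement of the unknotted core with the open solid torus $\phi(\mathrm{Int}(D^2\times S^1))$ of the construction and then slide each flat band so that it lies in a single disk slice $D^2\times\{f(i)\}$, with its feet realized as the intervals $I_i$ and $I_{\tau(i)}$ on the boundary circle. Choosing the heights $f(i)$ to be distinct and in the desired cyclic order gives precisely $B(D_0)$ for the chord diagram $D_0$ recording the foot-pairing $\tau$ and the induced cyclic order; the preceding lemma and the freedom to rotate band parameterizations by $\rho$ guarantee that this presentation is well defined independent of the auxiliary choices. Reading off $\tau$, the cyclic order, and the recorded clasp signs then yields the partially signed chord diagram $D$ with $\Gamma(D)$ isotopic to $X$. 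I expect the main obstacle to be the reduction step of the second paragraph: establishing that all of the knotting, linking, and twisting of both the core and the bands can be traded for clasp surgeries simultaneously, and that the auxiliary clasp bands can be routed so that, together with the original bands, they can all be straightened and stacked into disjoint disk slices without re-introducing complexity.
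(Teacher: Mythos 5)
Your first paragraph is fine, but it is only an unwinding of the definition of $\Gamma$: the theorem is equivalent to showing that every orientable banded knot is obtained by clasp surgeries on a sub-collection of the bands of some $B(D_0)$. Of the two steps you propose for this, the first is mis-stated and the second is a genuine gap. The local move in your second paragraph is correct only for self-crossings of $\gamma$: a clasp surgery inserts a full twist between two strands of the \emph{knot}, and its auxiliary band must have both feet on $\gamma$, so a crossing of a band with the core (or with another band) is not ``realized by a clasp surgery along a small auxiliary band connecting the two strands involved'' --- one of those strands is a two-dimensional band, not part of the knot. This is repairable (slide the point where a strand pierces a band along the sheet to a foot of the band, and trade it there for a crossing change between knot strands), but it must be argued. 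Also, orientability is not what gives the auxiliary clasps well-defined signs (those come from the crossing signs); it is what allows the original bands to survive as sign-zero chords.

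The real gap is the stacking step in your third paragraph. In $B(D_0)$ the heights of the disk fibers are not free parameters: by construction, the band of a chord lies in the fiber $D^2\times\{f(i)\}$, where $f(i)$ is the position on the knot of that chord's \emph{first} endpoint. So you cannot simply ``choose the heights $f(i)$ to be distinct and in the desired cyclic order''; you must show that the flat bands and the auxiliary clasps can be simultaneously isotoped so that each one lies in the page determined by its own first foot, i.e.\ that the vertical stacking order can be made to agree with the first-encounter order of the feet along the knot. Two bands whose chords cross cannot slide past one another in the height direction, so this compatibility is a genuine constraint --- it is exactly the content of the theorem. The paper's proof spends essentially all of its effort here: it marks the first-encounter undercrossings and bands, isotopes these marked points onto the x-axis in encounter order, views the knot from the side so that each marked point becomes a downward finger, repeats the procedure once more so that the fingers lie on top of everything, and even then must correct the negative clasps that come out with an extra full twist by inserting an additional small negative chord (which changes the resulting diagram). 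Your proposal explicitly defers this step as ``the main obstacle,'' so as it stands it assumes the heart of the theorem rather than proving it.
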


We will postpone the proof of this theorem to Section \ref{main}. In the meantime, we will examine the consequences of this result. 

\begin{crl}\label{knot}
Every oriented knot is representable as the geometric realization of some signed chord diagram. 
\end{crl}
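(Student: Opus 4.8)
The plan is to derive Corollary~\ref{knot} directly from Theorem~\ref{rep} by exhibiting every oriented knot as an orientable banded knot, and then appealing to the theorem to represent that banded knot by a partially signed chord diagram whose realization happens to be a knot. The key observation is that a knot is just a banded knot with the empty set of bands. More precisely, given an oriented knot $\gamma: S^1 \to S^3$, I would form the orientable banded knot $X = (\gamma, \{\beta_i\}_{i \in \emptyset})$ with no bands at all. The orientability condition is vacuously satisfied since there are no bands to check, so $X$ is a legitimate orientable banded knot.

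By Theorem~\ref{rep}, this banded knot $X$ is isotopic to the geometric realization $\Gamma(D)$ of some partially signed chord diagram $D$. The remaining step is to observe that $D$ must in fact be a signed chord diagram, i.e.\ that $0$ is not in the image of $s$. This is exactly the content of the final sentence of the definition of geometric realization: the realization $\Gamma(D)$ is a knot (as opposed to a knot-with-bands having a nonempty band set) precisely when $D$ is a signed chord diagram. Since our target $X$ has no bands, its realization is a genuine knot, forcing $s$ to avoid $0$ on every chord.

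The one subtlety I would want to check carefully is the interface between the notion of isotopy of banded knots and ordinary isotopy of knots. The theorem guarantees an isotopy of banded knots between $X$ and $\Gamma(D)$, and I would note that when both banded knots have empty band sets, the defining conditions for banded-knot isotopy collapse to the existence of an orientation preserving diffeomorphism $\phi: S^3 \to S^3$ and a reparameterization $p: S^1 \to S^1$ with $\phi \gamma p = \gamma'$, which is precisely ambient isotopy of oriented knots. Hence the representation of $X$ as $\Gamma(D)$ yields a representation of the original oriented knot, up to the expected notion of equivalence.

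I expect essentially no serious obstacle here: the corollary is a clean specialization of the main theorem to the case of no bands, and the only real work is bookkeeping to confirm that the ``knot iff signed'' clause and the banded-knot isotopy relation restrict correctly to the bandless setting. The substantive content lives entirely in Theorem~\ref{rep}, whose proof is deferred to Section~\ref{main}.
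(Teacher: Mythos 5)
Your proposal is correct and takes exactly the same route as the paper, whose entire proof is the one-line observation that oriented knots are oriented banded knots with no bands; your additional bookkeeping (the band bijection in the isotopy definition forcing $\Gamma(D)$ to be bandless, hence $D$ signed, and banded-knot isotopy collapsing to ordinary oriented isotopy) is a careful expansion of that same specialization of Theorem~\ref{rep}.
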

\proof{Oriented knots are oriented banded knots with no bands.}

\begin{dfn}
Given a banded knot, $X$, let $\Lambda(X)$ denote the surgery of $X$ where one removes the interior of the bands, and the interior of the intervals where the bands coincide with the knot. This produces an oriented link.
\end{dfn}

\begin{crl}
For every oriented link $L$, there is a partially signed chord diagram $D$ so that $\Lambda(\Gamma(D))$ is isotopic to $L$. Moreover, the chords of sign zero can be selected to not cross one another
\end{crl}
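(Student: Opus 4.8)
The plan is to reduce the statement to Theorem~\ref{rep} together with an elementary \emph{fusion} construction. I would first observe that $\Lambda$ descends to banded-knot isotopy classes: an orientation-preserving diffeomorphism $\phi:S^3\to S^3$ realizing an isotopy of banded knots carries the arcs and band-sides that define $\Lambda$ of one knot onto those of the other, while the orientation-preserving reparametrization $p$ of $\gamma$, the rotations $\rho^{s(i)}$ of the individual bands, and the reindexing $\sigma$ leave the relevant images unchanged. Hence if $\Gamma(D)$ is isotopic as a banded knot to some $X$, then $\Lambda(\Gamma(D))$ is isotopic as an oriented link to $\Lambda(X)$. So it suffices to build an orientable banded knot $X$ with $\Lambda(X)\cong L$, and then invoke Theorem~\ref{rep} to write $X\cong\Gamma(D)$ for some partially signed chord diagram $D$.

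To build $X$ I would fuse the components of $L$ into a single knot. Choose a spanning tree on the $n$ components of $L$ and, for each of its $n-1$ edges, attach a band joining the two corresponding components coherently with the orientation of $L$. Coherence of each attachment is precisely the orientability condition in the definition of a banded knot, so $X=(\gamma,\{\beta_i\})$ is an orientable banded knot whose underlying curve $\gamma$ is a single oriented knot, since the bands form a tree connecting all components. By construction $\Lambda$ removes exactly these $n-1$ bands together with their attaching arcs, undoing each fusion; as the bands form a spanning tree, the result is the original $n$ oriented components, so $\Lambda(X)\cong L$ as an oriented link.

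For the final clause I must control which chords of $D$ receive sign zero. The sign-zero chords are exactly the bands retained by $\Gamma$, since the nonzero chords become clasps, so under the isotopy $\Gamma(D)\cong X$ they are in bijection with the bands $\{\beta_i\}$ of $X$. Moreover, the endpoints of a sign-zero chord sit at the feet of the corresponding band along $\gamma$, and an orientation-preserving reparametrization of $\gamma$ preserves their cyclic order. Consequently two sign-zero chords cross if and only if the feet of the two corresponding bands interleave along $\gamma$, and it therefore suffices to perform the fusion so that no two bands have interleaving feet. This is arranged by attaching the bands locally: carry out the fusions one at a time, each time splicing the new component into a short arc of the partially fused knot, for instance attaching every band to the original component at distinct points. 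Then on one side of each band's pair of feet there lies only the newly spliced component, which carries no other band feet, so no two bands interleave and the sign-zero chords of $D$ are non-crossing.

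The step I expect to require the most care is the correspondence invoked in the last paragraph, namely that the isotopy furnished by Theorem~\ref{rep} preserves the cyclic order of the band feet, so that non-interleaving in $X$ transfers to non-crossing of the sign-zero chords in $D$. This should follow from the precise form of banded-knot isotopy, which permits only an orientation-preserving reparametrization of $\gamma$ and orientation-preserving rotations of the individual bands, neither of which disturbs the cyclic order of the attaching arcs. The remaining points—that coherent fusion bands meet the orientability condition and that $\Lambda$ returns $L$ with its given orientation—are routine once the fusion has been set up coherently.
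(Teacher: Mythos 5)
Your proposal is correct and takes essentially the same route as the paper: the paper likewise fuses the components of $L$ together with bands attached sequentially to obtain an orientable banded knot, invokes Theorem \ref{rep}, and notes that the sequential attachment makes the band chords parallel (non-crossing). Your extra step—checking that banded-knot isotopy preserves the cyclic order of band feet, so non-interleaving bands in $X$ force the sign-zero chords of $D$ to be non-crossing—is a careful justification of what the paper leaves implicit.
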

\begin{proof}
Given a link, we can connect all of the link components together by bands to produce a banded knot. We can connect the components sequentially, and the chords representing bands will all be parallel. 
\end{proof}

\begin{crl}
A knot $K$ is ribbon if and only if there is some signed chord diagram $D$ with $\Gamma(D)$ isotopic to $K$, and a diagram $D'$ which is made from $D$ by inserting non intersecting bands and has $\Lambda(\Gamma(D'))$ isotopic to the trivial link. 
\end{crl}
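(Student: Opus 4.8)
The plan is to translate the statement into the classical fusion (band-surgery) characterization of ribbon knots and then use the paper's dictionary between bands and sign-zero chords. Recall that $K$ is ribbon precisely when there is a family of disjoint, coherently oriented bands $b_1,\dots,b_m$ attached to $K$ such that simultaneous band surgery along them produces an unlink of \emph{exactly} $m+1$ components: capping the resulting unlink with the $m+1$ disjoint disks it bounds and gluing on the saddle trace of the bands yields an immersed disk with only ribbon singularities and boundary $K$, and the count $m+1$ is exactly what makes the Euler characteristic of that surface equal to $1$, i.e.\ forces it to be a disk rather than a higher-genus surface. The observation connecting this to the paper is that attaching bands to $K$ and then applying $\Lambda$ is literally simultaneous band surgery, so both sides of the biconditional already speak the same language once the component count is under control.

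For the \emph{only if} direction I would begin with a ribbon disk for $K$, choose a Morse function with no maxima, and read off its saddles as a coherent family of bands $\{b_i\}$ making $(K,\{b_i\})$ into an orientable banded knot whose $\Lambda$ is an unlink. I then want this banded knot to be $\Gamma(D')$ for a diagram $D'$ whose sign-zero chords are pairwise non-crossing. Rather than invoke Theorem \ref{rep} as a black box (which need not deliver non-crossing band chords), I would imitate the proof of the preceding corollary and perform the $m$ fusions sequentially, so the bands may be isotoped to lie parallel and their chords drawn without crossings. Deleting these sign-zero chords from $D'$ produces a signed chord diagram $D$; since removing a sign-zero chord simply deletes the corresponding band while leaving the clasps untouched, $\Gamma(D)$ is the underlying knot $K$, and $D'$ is then exactly $D$ with the non-crossing bands inserted, with $\Lambda(\Gamma(D'))$ the required unlink.

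For the \emph{if} direction, suppose $D$ and $D'$ are as in the statement. Then $\Gamma(D')$ is the knot $K=\Gamma(D)$ equipped with one coherent band per inserted sign-zero chord, and $\Lambda(\Gamma(D'))$ is an unlink; I must upgrade ``unlink'' to ``unlink with exactly (number of bands)$+1$ components,'' for only then does the fusion characterization apply. This is where the non-crossing hypothesis does the work: because $B(D')$ is an orientable banded unknot every band is coherent, so each band surgery is a coherent saddle changing the component count by $\pm1$, and I would prove by induction on the number of chords that $b$ pairwise non-crossing coherent bands on a single oriented circle always split it into exactly $b+1$ components with a tree-like nesting pattern. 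The inductive step removes an innermost non-crossing chord, which splits off a clasp-free loop and leaves a circle carrying the remaining $b-1$ still-non-crossing bands. Crucially this count depends only on the cyclic attaching data recorded by the chord diagram and not on the clasps, so the knotting of $K$ is irrelevant. With $b+1$ components in hand, the saddle trace together with the $b+1$ bounding disks is a connected surface of Euler characteristic $1$ with single boundary $K$, hence a ribbon disk, and $K$ is ribbon.

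I expect the main obstacle to be exactly this component bookkeeping: showing that the planar, combinatorial non-crossing condition on the chords forces the bands to form a tree after coherent surgery, independently of the clasps that knot $K$, and that this is precisely what separates ``$K$ bounds an immersed surface'' from ``$K$ bounds a ribbon disk.'' The mirror-image difficulty in the forward direction, namely arranging the fusion bands to have non-crossing chords, I would dispatch with the same sequential-fusion trick already employed for the link corollary.
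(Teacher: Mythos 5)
Your ``if'' direction is essentially correct, and in fact it supplies the details the paper waves off with ``of course'': the innermost-chord induction showing that $b$ pairwise non-crossing coherent bands on one circle surger to exactly $b+1$ components is exactly the bookkeeping needed to turn a trivial $\Lambda(\Gamma(D'))$ into a fusion presentation of $K$, hence a ribbon disk. The genuine gap is in your ``only if'' direction, at the step where you decline to use Theorem \ref{rep} and instead claim that the fusion bands coming from the ribbon disk ``may be isotoped to lie parallel'' by performing the fusions sequentially. This step fails: the crossing pattern of the band chords records the cyclic order in which the band feet sit on the knot, and that cyclic order is preserved by any isotopy of banded knots (the reparameterization $p:S^1\to S^1$ in the definition is orientation preserving), so no isotopy can convert nested or crossing band chords into parallel ones. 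Moreover, the ``sequential-fusion trick'' of the preceding corollary is not available here: in the link corollary one is free to choose \emph{any} bands joining the components, because the resulting knot is unconstrained, whereas here the bands are forced on you by the ribbon structure --- their fusion must produce the specific knot $K$ --- so you cannot simply choose them to be sequential. Repairing this along your lines would require band slides (handle slides), which you neither invoke nor justify.

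The missing idea is the observation that constitutes the paper's entire proof, and it is precisely the converse of the induction you do carry out: if $n$ coherent bands attached to a knot surger to a link with $n+1$ components, then no two of the band chords can cross. Indeed, the component count is purely combinatorial and independent of the order of the surgeries; if chords $c$ and $c'$ crossed, surgering $c$ first splits the circle into two components with the feet of $c'$ on different components, so surgering $c'$ merges them back, and since every remaining surgery changes the count by at most one, the total is at most $n-1 < n+1$. Consequently your stated reason for bypassing Theorem \ref{rep} --- that it ``need not deliver non-crossing band chords'' --- is backwards: apply Theorem \ref{rep} to the banded knot $(K,\{b_i\})$ read off from the ribbon disk, whose surgery is an unlink of $n+1$ components; the sign-zero chords of the resulting diagram $D'$ have the same interleaving pattern as the bands $b_i$, hence are automatically pairwise non-crossing, and deleting them yields the diagram $D$ with $\Gamma(D)$ isotopic to $K$. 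With that substitution your argument closes up and coincides with the paper's.
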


\begin{proof}
If we add $n$ bands to a knot so that the surgery on these bands gives an $n+1$ component link, then the cord diagram for the bands must have no intersecting chords. This means that for any ribbon knot we can find a pair of diagrams as described. Of course, if such diagrams exist then the knot is ribbon. 
\end{proof}

\begin{crl}
A knot $K$ has unknotting number $\leq n$ if and only if there is a signed chord diagram $D$ and a subdiagram $D'\subseteq D$, such that the difference between the order of $D$ and the order of $D'$ is at most $n$, $\Gamma(D)$ is isotopic to $K$, and $\Gamma(D')$ is an unknot. 
\end{crl}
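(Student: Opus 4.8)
The whole argument rests on one geometric observation: \emph{toggling a single chord changes the knot by exactly one crossing change.} More precisely, if $D$ is a signed chord diagram and $c$ is one of its chords, then $\Gamma(D)$ and $\Gamma(D\setminus\{c,\tau(c)\})$ are related by a single crossing change. The reason is that the clasp introduced by the surgery of Definition \ref{surgery} at $c$ is supported in a small ball disjoint from the clasps coming from the other chords; undoing one of the two crossings of that clasp unhooks it, after which the two strands run parallel and are isotopic to the configuration in which the band at $c$ was never inserted — that is, to $\Gamma(D\setminus\{c,\tau(c)\})$. Since the surgery regions of distinct chords are disjoint, this one crossing change leaves every other clasp untouched. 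I would isolate this as the engine driving both implications.

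The backward implication is then immediate. Suppose $\Gamma(D)\cong K$, $\Gamma(D')$ is an unknot, $D'\subseteq D$, and $\ord(D)-\ord(D')=k\le n$. Then $D\setminus D'$ consists of $k$ chords, and deleting them one at a time produces a chain of signed chord diagrams from $D$ down to $D'$. At each step the observation above supplies one crossing change relating consecutive knots, so composing them carries $K=\Gamma(D)$ to the unknot $\Gamma(D')$ in $k\le n$ crossing changes, whence $u(K)\le n$.

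For the forward implication I would run the same correspondence in reverse. Assuming $u(K)\le n$, fix a diagram of $K$ together with a set of $m\le n$ crossings whose simultaneous change yields a diagram of the unknot $U$. Using the fact that a crossing change is realized by clasp surgery, at each marked crossing I join the two strands of $U$ by a small embedded band so that clasping that band (with the sign chosen to match the direction of the crossing change) effects that change; doing this at all $m$ crossings exhibits an orientable banded knot $Y=(U,\{\beta_1,\dots,\beta_m\})$ whose underlying knot is the unknot and for which clasping all $m$ bands reproduces $K$. Now apply Theorem \ref{rep} to $Y$ to obtain a partially signed chord diagram $D_0$ with $\Gamma(D_0)\cong Y$. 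Because isotopy of banded knots matches bands to bands, $D_0$ has exactly $m$ chords of sign $0$ (the images of the $\beta_j$), and deleting them leaves a signed subdiagram $D'$ whose realization is the underlying knot of $Y$, namely $U$. Reassigning to each of these $m$ chords the sign that clasps the corresponding band in the direction needed to recover $K$ yields a genuine signed chord diagram $D$ with $D'\subseteq D$, $\Gamma(D)\cong K$, $\Gamma(D')\cong U$, and $\ord(D)-\ord(D')=m\le n$.

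The step I expect to be the main obstacle is the geometric claim invoked in both directions: that clasp surgery on a suitably placed band realizes an arbitrary prescribed crossing change and is undone by a single crossing change. Making this rigorous requires a careful local analysis of the clasp tangle of Definition \ref{surgery} — verifying that unhooking one of its two crossings returns the trivial bandless configuration up to isotopy, and that any crossing change on $U$ can be produced by inserting and clasping an appropriate band with the correct sign. Once that local picture is pinned down, everything else is bookkeeping with the cyclic order and the band-to-chord dictionary furnished by Theorem \ref{rep}.
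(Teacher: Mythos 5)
Your proposal is correct and takes essentially the same route as the paper: realize each of the $m\le n$ crossing changes turning the unknot into $K$ as a clasp surgery on a band attached to the unknot, apply Theorem \ref{rep} to the resulting banded unknot, and convert the sign-zero chords into signed chords of the appropriate sign. Your explicit treatment of the converse direction (deleting a chord amounts to a single crossing change, since unhooking a clasp is one crossing change) is the same observation the paper relies on, though the paper leaves that direction implicit.
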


\begin{proof}
The crossing changes required to transform an unknot into $K$ can be thought of as clasp surgeries on bands. Therefore, there is a banded unknot with at most $n$ bands, such that after performing clasp surgeries of some sign on each band, a knot isotopic to $K$ is obtained. Theorem \ref{rep} then gives us the desired result. 
\end{proof}

\begin{dfn}
Recall Definition \ref{surgery}. We can define a new kind of surgery, which we call the $s$ singular clasp surgery, by instead using the parameterization $$\eta_0(t) = \left(t,2t\left(1-t\right),\frac{1}{2} + st\left(\frac{1}{2}-t\right)\left(1-t\right)\right)$$  $$\eta_1(t) = \left(1-t,1-2t\left(1-t\right),\frac{1}{2} + st\left(\frac{1}{2}-t\right)\left(1-t\right)\right)$$ This produces a transverse double point. Given a banded knot $X = (\gamma,\{\beta_i\}_{i\in F})$, and a function $s: F\to \{1,-1\}$ we define $\Sigma_f(X)$ to be the singular knot obtained by applying s singular clasp surgeries to each band of $X$.
\end{dfn}

\begin{crl}\label{singular}
Let $K$ be a singular knot with transverse double points, and let $S$ be the set of double points. For any function $f:S\to \{1,-1\}$, there exists a partially signed chord diagram $D$, and a function $f': F\to \{1,-1\}$, where $\Gamma(D) = (\gamma, \{\beta_i\}_{i\in F})$ such that $K$ is isotopic to $\Sigma_{f'}(\Gamma(D))$ in such a way that if the isotopy takes a double point $x$ to a double point at which the corresponding band index is $i\in F$, then we have $f'(i) = f(x)$.
\end{crl}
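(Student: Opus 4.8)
The plan is to reverse the singular clasp surgery in order to turn the singular knot $K$ into an honest orientable banded knot $X$, apply Theorem \ref{rep} to $X$, and then transport the signs. Working locally at a double point $x\in S$, I would choose an orientation-preserving chart $\theta\colon[0,1]^3\to S^3$ carrying the two transverse branches of $K$ through $x$ onto the curves $\theta\eta_0,\theta\eta_1$ of the $f(x)$ singular clasp surgery. The point to check is that \emph{either} sign can be realized: both the $s=1$ and $s=-1$ surgeries produce genuine transverse double points, and any transverse double point of an oriented knot is carried onto the standard local model by an orientation-preserving diffeomorphism, so the branches of $K$ can be put in standard form for whichever value $f(x)$ we wish. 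I would then reverse the surgery, deleting the two wiggled branches and re-inserting the flat band $\beta_x=\theta([0,1]^2\times\{1/2\})$ together with the straightened edges $I_0,I_1$. Since $\theta\eta_0$ runs in the $+a$-direction and $\theta\eta_1$ in the $-a$-direction (writing $(a,b,c)$ for the cube coordinates), the reinserted edges inherit opposite orientations along the knot, which is exactly the coherence condition making $\beta_x$ orientable. Carrying this out at every double point produces an orientable banded knot $X=(\gamma_0,\{\beta_x\}_{x\in S})$ with $\Sigma_f(X)\cong K$ by construction.

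Next I would apply Theorem \ref{rep} to $X$, obtaining a partially signed chord diagram $D$ and an isotopy of banded knots $X\cong\Gamma(D)=(\gamma,\{\beta_i\}_{i\in F})$. The bands indexed by $F$ are exactly the sign-zero chords of $D$, and the isotopy furnishes a bijection $\sigma\colon S\to F$ identifying the double points of $K$ with these bands. I would then simply \emph{define} $f'$ by $f'(\sigma(x))=f(x)$, so that the sign-matching clause of the statement holds by construction; what remains is to confirm that $\Sigma_{f'}(\Gamma(D))$ is still isotopic to $K$.

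This reduces to the naturality of singular clasp surgery under banded-knot isotopy. An orientation-preserving ambient diffeomorphism commutes with the surgery because the surgery is defined through an orientation-preserving $\theta$, and a reparameterization of $\gamma$ plays no role. The only subtle ingredient is the $180^\circ$ band rotation $\rho^{s(i)}$ allowed in the definition of isotopy, which could in principle flip the sign. Here I would compute directly that the cube rotation $(a,b,c)\mapsto(1-a,1-b,c)$ sends $\eta_0\mapsto\eta_1$ and $\eta_1\mapsto\eta_0$ \emph{while leaving $s$ fixed}, so rotating a band merely relabels its two edges and does not change the sign of the surgery. Granting this, the isotopy $X\cong\Gamma(D)$ carries $\Sigma_f(X)$ onto $\Sigma_{f'}(\Gamma(D))$, and therefore $\Sigma_{f'}(\Gamma(D))\cong\Sigma_f(X)\cong K$, with the double points matched to the bands via $\sigma$ and the signs matched by the definition of $f'$.

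I expect the principal obstacle to lie in the first step: showing simultaneously that each double point opens into an \emph{orientable} band and that the recovering surgery can be forced to have the prescribed sign $f(x)$. Both facts rest on the same local analysis of the curves $\eta_0,\eta_1$ and the orientation behavior of the chart $\theta$, and once that computation is secured the remainder is a formal invocation of Theorem \ref{rep} together with the sign-invariance of the surgery under the moves defining banded-knot isotopy.
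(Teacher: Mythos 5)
Your proposal is correct and follows essentially the same route as the paper: realize each transverse double point locally as a singular clasp surgery of the prescribed sign, thereby converting $K$ into a signed banded knot, and then invoke Theorem \ref{rep} to obtain the diagram and transport the signs. The paper states this in three sentences and leaves the local orientation and rotation-invariance checks implicit; your extra verification that the $180^\circ$ band rotation $\rho$ exchanges $\eta_0$ and $\eta_1$ without changing $s$ is a worthwhile detail but not a different argument.
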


\begin{proof}
Locally, a transverse self intersection can be represented as a singular clasp surgery of either sign. We can then represent our singular knot as a banded knot with signed bands. Theorem \ref{rep} then gives us the desired partially signed chord diagram, along with a function from the bands of its geometric realization to $\{1,-1\}$ which coincides with our choice of sign for the singular clasp surgeries. 
\end{proof}

\begin{dfn}
For an oriented knot $K$, let $A(K)$ denote the minimum order of all chord diagrams $D$ for which $\Gamma(D)$ is isotopic to $K$.
\end{dfn}

\begin{crl}
The Heegaard genus of $K$ is at most $A(K) + 1$. 
\end{crl}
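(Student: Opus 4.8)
The plan is to show that the exterior $E(K) = S^3 \setminus \text{Int}(N(K))$ admits a Heegaard splitting of genus $A(K)+1$; equivalently, since the Heegaard genus of a knot exterior equals its tunnel number plus one, it suffices to exhibit $A(K)$ unknotting tunnels for $K$. Fix a chord diagram $D$ realizing the minimum, so $\ord(D) = A(K) =: n$ and $\Gamma(D)$ is isotopic to $K$ (necessarily the chords carry nonzero signs, so that $\Gamma(D)$ is a knot). Recall that $\Gamma(D)$ is built from the banded unknot $B(D) = (\gamma,\{\beta_i\})$ by performing one clasp surgery at each of the $n$ bands, and that by construction each band $\beta_i$, hence its core arc $c_i := \beta_i(\{1/2\}\times[0,1])$, lies in a distinct meridian disk of the solid torus $W := S^3 \setminus \text{Int}(N(\gamma))$, with its two feet on $\gamma$. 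I would produce exactly one tunnel per clasp.

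First I would analyze the spine $G := \gamma \cup \bigcup_i c_i$ of $B(D)$. Its regular neighborhood $N(G)$ is a handlebody of genus $n+1$, since $G$ is a connected graph of first Betti number $1+n$ (the circle $\gamma$, plus $n$ arcs attached at their endpoints). The essential point is that the complementary region $H := S^3 \setminus \text{Int}(N(G))$ is \emph{also} a handlebody. To see this I would exploit the product structure of $W$: cutting $W$ along one meridian disk placed in each gap between cyclically consecutive cores decomposes $H$ into $n$ pieces, each a $3$-ball (a cylinder $D^2\times I$) from which a single boundary-spanning arc $c_i$ has been removed, hence a genus-one handlebody. Reassembling, these $n$ solid tori are glued along meridian disks according to a cycle graph on $n$ vertices, and gluing handlebodies along disks in this pattern produces a handlebody of genus $\left(\sum_i 1\right) + b_1(\text{cycle}) = n+1$. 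Thus $N(G)\cup H$ is in fact a genus $n+1$ Heegaard splitting of $S^3$, which serves as a useful consistency check.

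It remains to transfer this picture from the spine $G$ of $B(D)$ to the actual knot $K=\Gamma(D)$ by accounting for the clasp surgeries, and this local analysis is where I expect the real work to lie. Each clasp surgery is supported in a ball $\theta([0,1]^3)$ meeting the rest of the knot only in the two feet of $\beta_i$; inside it, $B(D)$ presents two parallel arcs joined by the core $c_i$, whereas $K$ presents the two clasped arcs $\eta_0,\eta_1$. I would take the $i$-th tunnel $t_i$ to be the arc joining $\eta_0$ to $\eta_1$ through the hook of the clasp (its co-core). The key claim is that, rel the boundary of the ball, the tangle-with-arc $(\theta([0,1]^3),\ \eta_0\cup t_i\cup \eta_1)$ is isotopic to $(\theta([0,1]^3),\ I_0\cup c_i\cup I_1)$: bridging the two clasped strands by $t_i$ permits one strand to be slid off the other, unhooking the clasp (this is precisely the phenomenon that makes twist knots tunnel number one). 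Granting this at every clasp simultaneously, which is legitimate since the supports lie in disjoint band neighborhoods, $N\!\left(K\cup\bigcup_i t_i\right)$ is isotopic to $N(G)$, so its complement is the handlebody $H$ found above. Hence $\{t_i\}_i$ is a system of $n$ unknotting tunnels for $K$, whence the tunnel number of $K$ is at most $n=A(K)$ and its Heegaard genus is at most $A(K)+1$. The main obstacle is a rigorous verification of the local clasp-plus-tunnel isotopy, which must be carried out carefully enough that the individually verified local isotopies assemble into a global ambient isotopy of neighborhoods.
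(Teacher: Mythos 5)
Your overall strategy is sound, and it is in essence a detailed elaboration of the paper's own two-sentence proof: the Heegaard surface you produce, $\partial N\bigl(K\cup\bigcup_i t_i\bigr)\cong\partial N\bigl(\gamma\cup\bigcup_i c_i\bigr)$, is exactly the boundary of a tubular neighborhood of the banded unknot $B(D)$, which is the surface the paper exhibits; your meridian-disk decomposition showing the outside is a handlebody, and the reduction via tunnel number, supply details the paper omits. However, your self-identified ``key claim'' is false as literally stated, and this is a genuine (though repairable) error. You assert that the graph $\eta_0\cup t_i\cup\eta_1$ is isotopic, rel $\partial\theta([0,1]^3)$, to the graph $I_0\cup c_i\cup I_1$. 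Any ambient isotopy of the ball fixing its boundary pointwise and carrying the first graph to the second must match vertices and edges compatibly (the four boundary points are fixed, and trivalent vertices go to trivalent vertices), so it would carry the sub-tangle $\eta_0\cup\eta_1$ to the sub-tangle $I_0\cup I_1$. That would exhibit the clasp tangle as equivalent rel boundary to the trivial two-string tangle, which is impossible: capping both tangles with the same pair of boundary-parallel arcs in $\partial\theta([0,1]^3)$ yields a Hopf link in the first case and a two-component unlink in the second.

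What is true --- and what your own justification (``one strand slides off the other'') actually proves --- is the weaker statement that the \emph{regular neighborhoods} are isotopic rel boundary. Sliding the end of one edge along another edge is an edge slide, i.e.\ a handle slide of the neighborhood; it changes the spine but not the isotopy class of the neighborhood, and one edge slide of the hooked strand of $\eta_0$ through the tunnel and along $\eta_1$ undoes the clasp. Since your final assembly step uses only that $N\bigl(K\cup\bigcup_i t_i\bigr)$ is ambient isotopic to $N(G)$ (via local isotopies supported in the disjoint surgery balls), the argument goes through verbatim once the key claim is restated at the level of neighborhoods --- equivalently, that the two spines agree up to isotopy and edge slides rel boundary. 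So the proof is correct after this repair: fix the statement, keep the argument.
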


\begin{proof}
If $D$ is a signed chord diagram. The boundary of a tubular neighborhood of $B(D)$ is a Heegaard splitting for the complement of $\Gamma(D)$. The genus of this Heegaard splitting is one more than the order of $D$.
\end{proof}
\newpage
\section{Some Examples}

We will now describe a procedure for actually drawing a knot diagram for the knot corresponding to a signed chord diagram. First, we adopt the convention that in a signed chord diagram, thin black lines are +1 chords, dashed lines are -1 chords, and thick grey lines are 0 chords. 

\begin{figure}[h]
\caption{A partially signed chord diagram.}
\centering
\includegraphics[scale = 0.45]{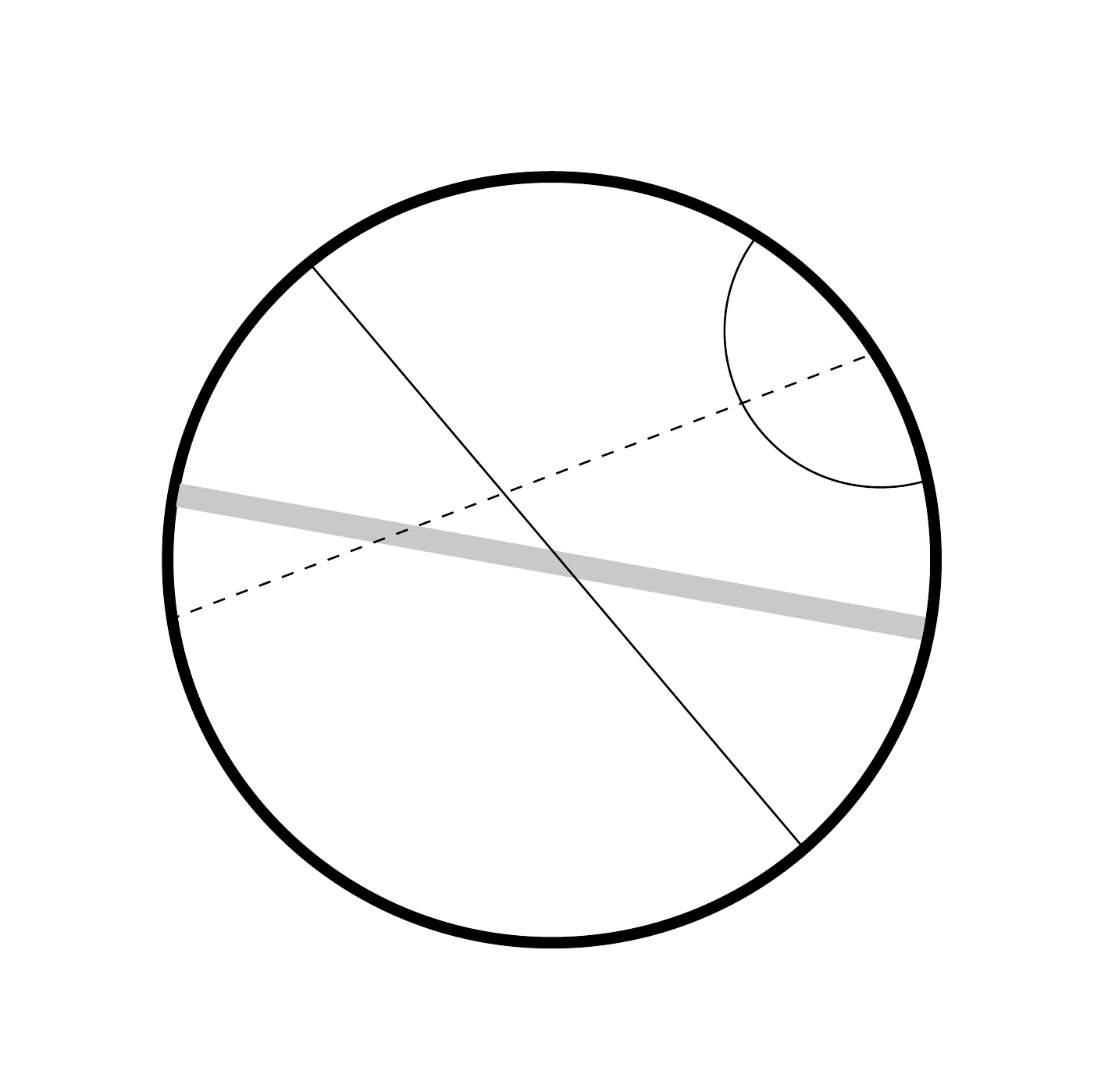}
\end{figure}

To draw a knot from a partially signed chord diagram, first draw the diagram on a line. It doesn't matter which point the diagram is expanded around.

\begin{figure}[h]
\caption{The above diagram drawn on a line.}
\centering
\includegraphics[scale = 0.5]{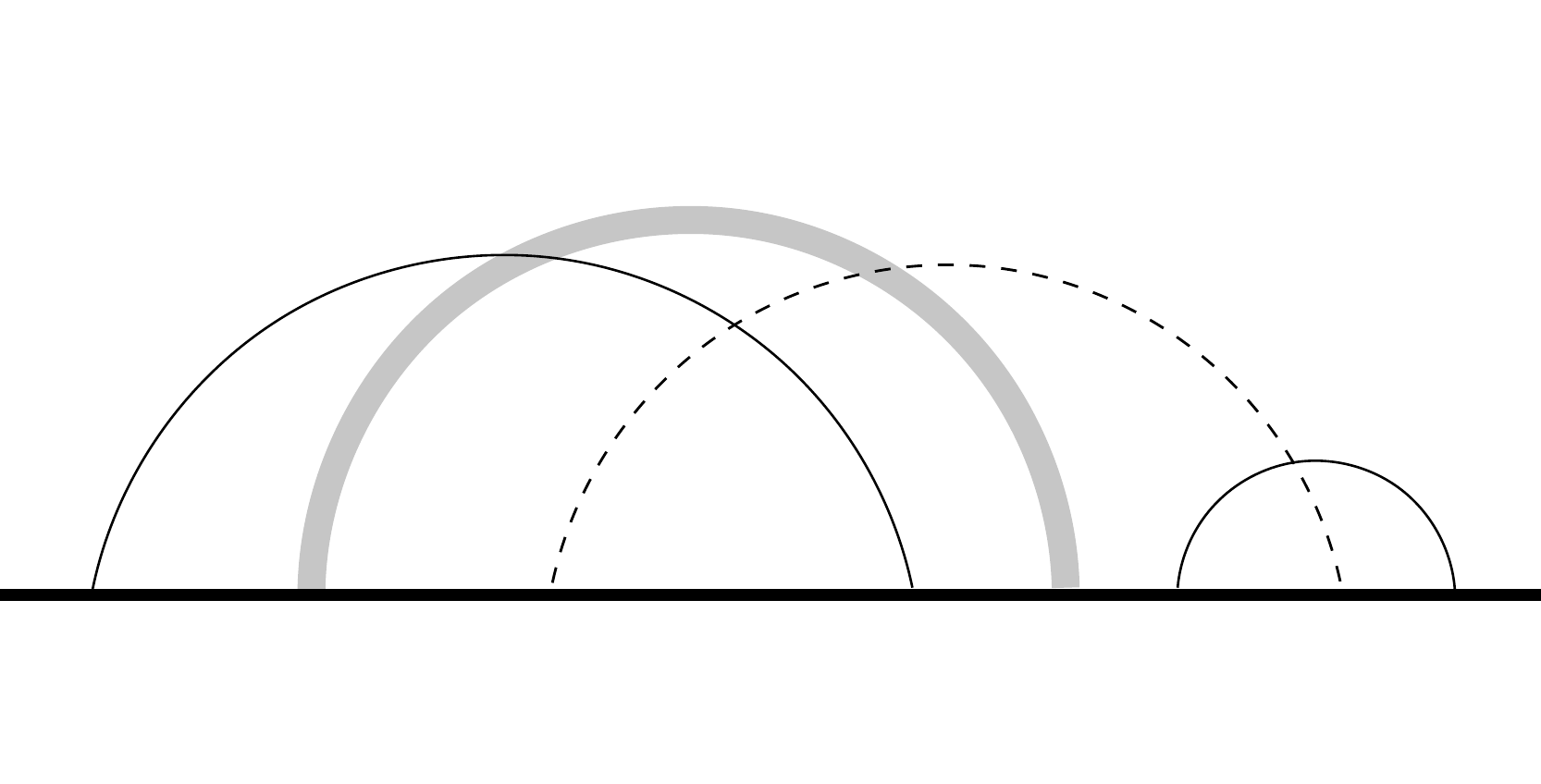}
\end{figure}

Then, move along the line. Every time we reach a point which is the first part of a chord, draw a gap unless the chord is sign zero. Every time we reach a point which is the second part of a chord, draw a clasp, going under any lines that are already drawn, which goes up and over to link with the corresponding gap via a clasp of the correct sign. For a band, draw a finger that goes back to the first part of the chord, and a small rectangular band.

\begin{figure}[h]
\caption{The knot we get from the above diagram.}
\centering
\includegraphics[scale = 0.5]{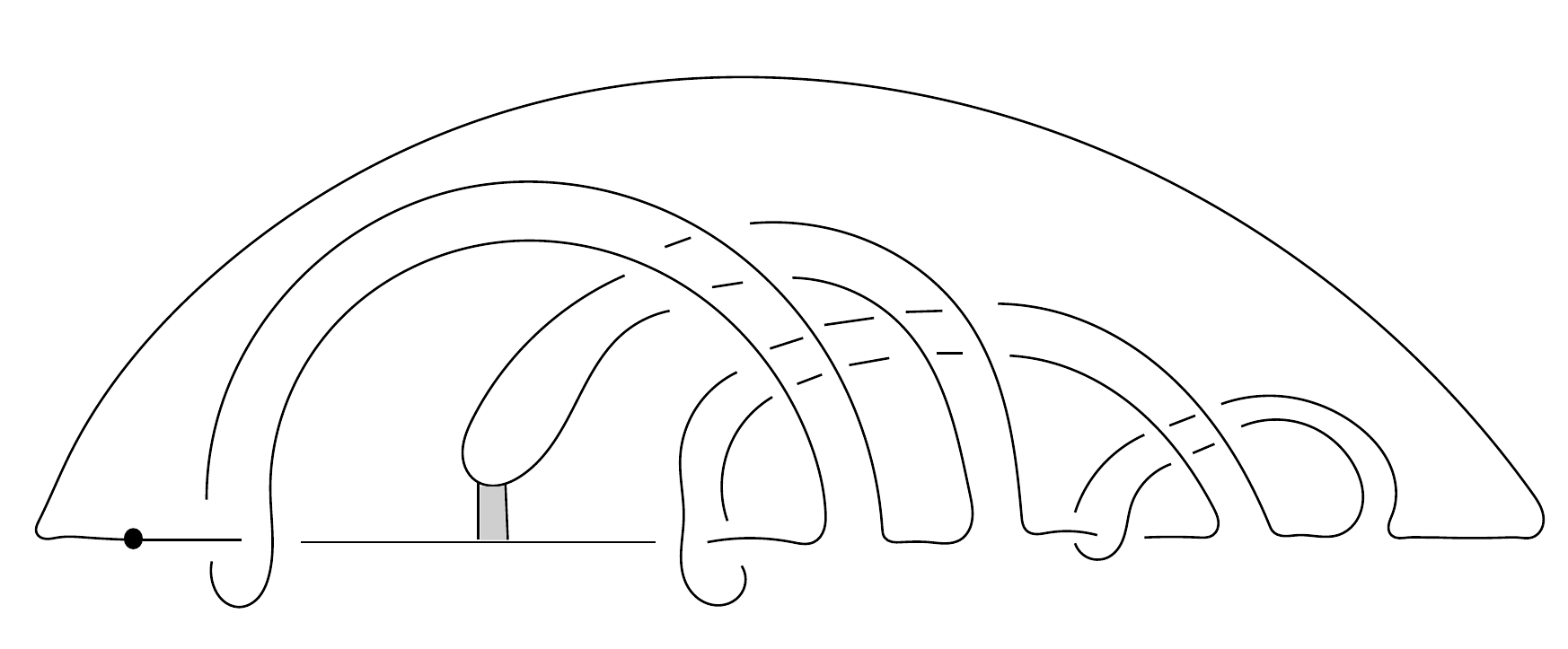}
\end{figure}

Now, we will give examples of signed chord diagrams for some common knots. 

\begin{figure}[h]
\caption{Signed chord diagrams and their corresponding knots.}
\centering
\includegraphics[scale = 0.5]{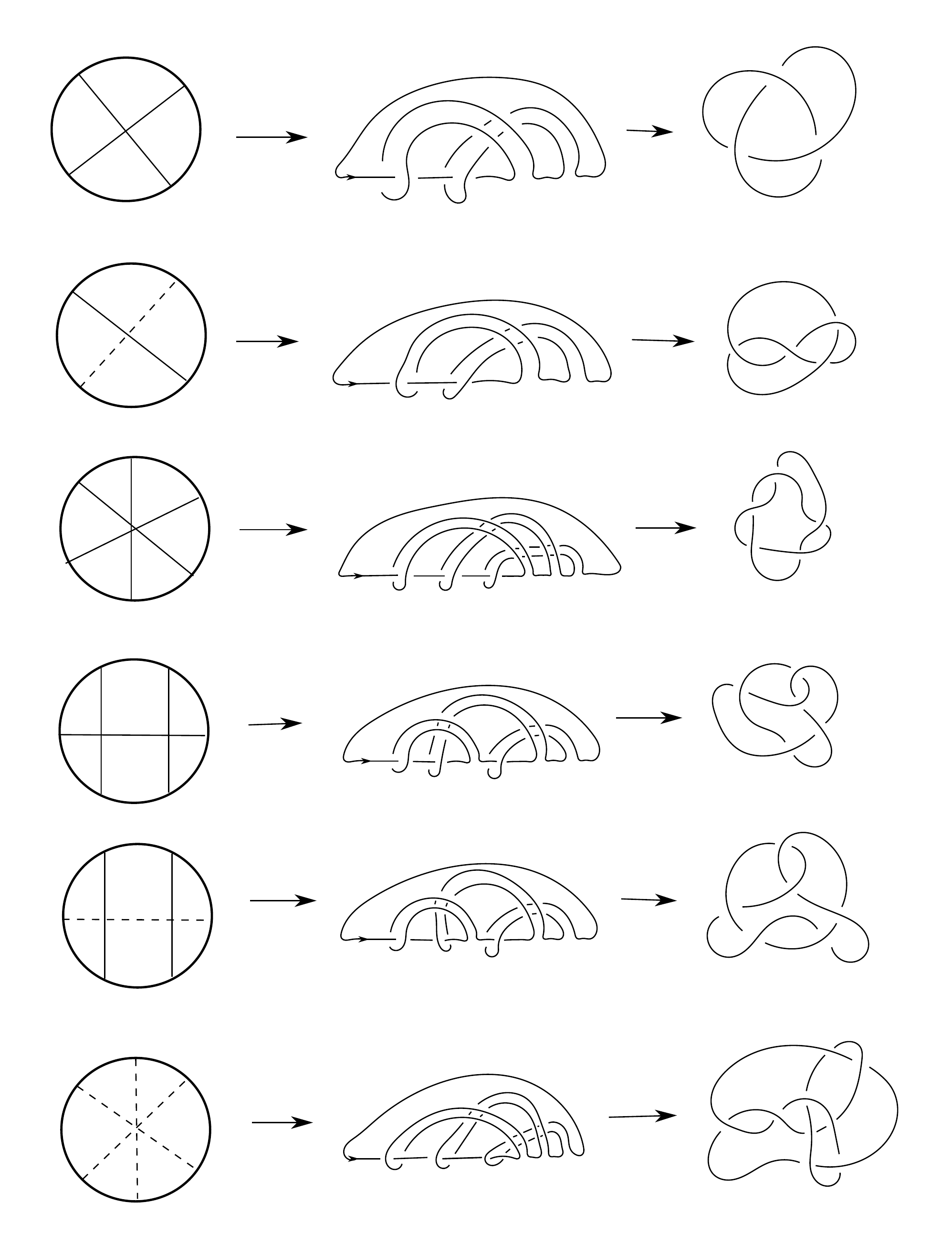}
\end{figure}

Note that in some cases, reversing the sign of every chord yields the mirror image knot. This is not true in general, however. As can be seen above, the diagram with three mutually intersecting +1 chords yields the right handed cinquefoil, but the diagram with three mutually intersecting -1 chords yields the knot $9_{48}'$. 

\begin{figure}[h]
\caption{Two diagrams for the cinquefoil, and three diagrams for the unknot.}
\centering
\includegraphics[scale = 0.4]{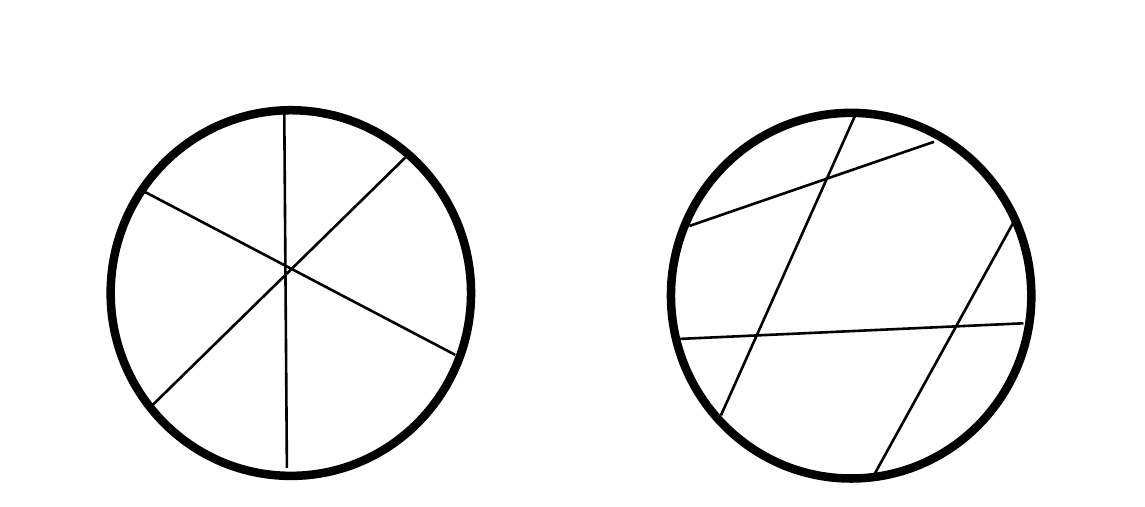}

\includegraphics[scale = 0.4]{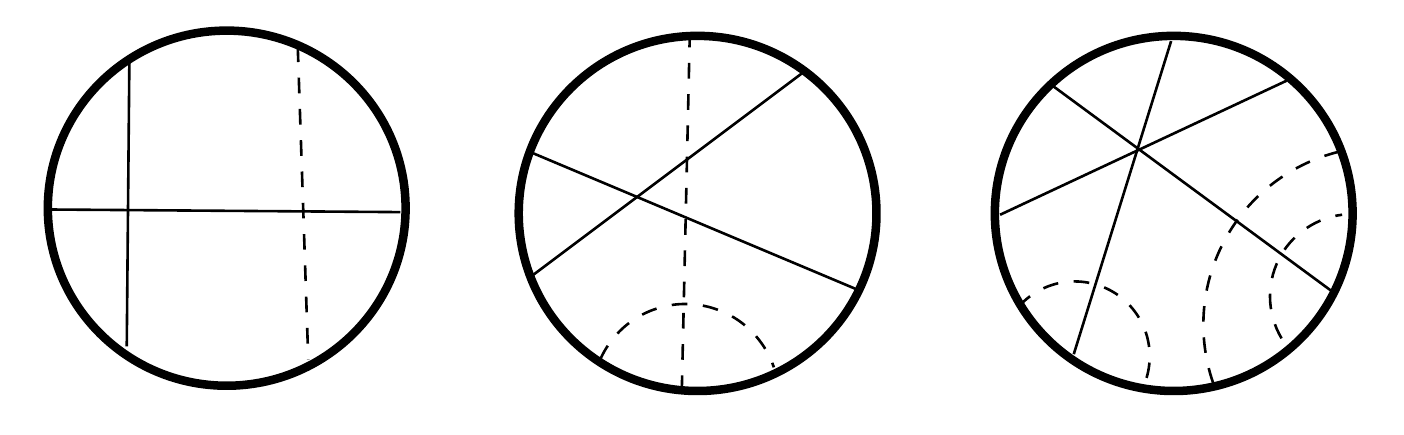}
\end{figure}

\section{Finite Type Invariants}

There is a striking similarity between the representation of knots though signed chord diagrams and the representation of knots through Gauss diagrams. There is one key difference however, which is of course that not all Gauss diagrams represent knots, but by construction, all signed chord diagrams represent knots. This difference allows one to obtain additional structure on the space of finite type invariants. 

Throughout this section, $\scr{D}$ will denote the set of all signed chord diagrams up to equivalence, $\scr{K}$ will denote the set of knot types, and $\text{ord}(D)$ denotes the order of a signed chord diagram $D$.

\begin{dfn}
A function $f: \scr{D}\to \R$ is said to be finite type of order $\leq n$ if for any pair of signed chord diagrams, $D'\subseteq D$, for which $\ord(D)-\ord(D') >n$, we have 
\begin{equation} \sum_{D'\subseteq H\subseteq D} (-1)^{\ord{D}-\ord{H}} f(H) = 0 \label{altsum}\end{equation}
\end{dfn}

\begin{thm}\label{invariants}
A function $V: \scr{K} \to \R$ is a finite type invariant of order $\leq n$ if and only if the function $V\Gamma: \scr{D}\to \R$ is finite type of order $\leq n$. 
\end{thm}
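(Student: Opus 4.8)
The plan is to exhibit a precise dictionary between the two alternating sums---the Vassiliev sum over resolutions of double points on the knot side, and the subdiagram sum of \eqref{altsum} on the diagram side---and to show that they agree up to a global nonzero sign. The bridge between them is the singular clasp surgery of Corollary \ref{singular}, together with the observation (already implicit in the unknotting-number corollary) that a signed chord and its deletion differ by a single crossing change, so that the interpolating double point is exactly a singular clasp.

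First I would isolate the key geometric lemma: for a band carrying sign $s\in\{-1,1\}$, the transverse double point produced by the $s$ singular clasp surgery has its two Vassiliev resolutions equal to (a) the regular $s$ clasp surgery and (b) the banded knot with that band removed. To see this, observe that in the local model of Definition \ref{surgery} the arcs $\eta_0,\eta_1$ meet in two crossings for the regular surgery (coefficient $3$) but in a single doubled transverse crossing for the singular surgery (coefficient $2$); perturbing this double point one way restores the two crossings, recreating the $s$ clasp, while perturbing it the other way pulls the arcs apart entirely, leaving no clasp. Since the bands of $B(D)$ are supported in disjoint local regions of the solid torus, the ``no clasp'' state is isotopic to the realization of the diagram with that chord deleted. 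In diagram language: resolving the double point at a chord $c$ yields $\Gamma$ of the diagram with $c$ present at its sign, or $\Gamma$ of the diagram with $c$ deleted.

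Next I would translate both finite-type conditions into this common form. Given signed chord diagrams $D'\subseteq D$ with $m=\ord(D)-\ord(D')>n$ and $D\setminus D'=\{c_1,\dots,c_m\}$, form the partially signed diagram keeping the signs of $D'$ but assigning sign $0$ to each $c_j$, realize it as a banded knot, and apply the $s(c_j)$ singular clasp surgery at each band; this produces a singular knot $K^{\ast}$ with exactly $m$ double points whose $2^m$ Vassiliev resolutions are, by the key lemma, precisely the knots $\Gamma(H)$ for $D'\subseteq H\subseteq D$. Conversely, an arbitrary singular knot with $m>n$ double points is presented by Corollary \ref{singular} as such a $\Sigma_{f'}(\Gamma(\tilde D))$, so the same picture applies. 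Adopting the convention that the positive resolution is the clasp-present state when $s(c_j)=1$ and the clasp-absent state when $s(c_j)=-1$, the Vassiliev sign of the resolution indexed by $H$ factors as $\prod_j\epsilon_j=\bigl(\prod_j s(c_j)\bigr)(-1)^{\ord(D)-\ord(H)}$, whence
\begin{equation*}
\sum_{\epsilon}\Bigl(\prod_j\epsilon_j\Bigr)V(K_\epsilon)=\Bigl(\prod_{j=1}^m s(c_j)\Bigr)\sum_{D'\subseteq H\subseteq D}(-1)^{\ord(D)-\ord(H)}\,V(\Gamma(H)).
\end{equation*}
Because $\prod_j s(c_j)=\pm1$, the Vassiliev sum on the left vanishes if and only if the subdiagram sum on the right vanishes.

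Both implications then follow at once. If $V$ is finite type of order $\le n$ on $\scr{K}$, then $K^{\ast}$ has $m>n$ double points, so the left-hand side is zero and \eqref{altsum} holds for $V\Gamma$; conversely, if $V\Gamma$ is finite type of order $\le n$ on $\scr{D}$, then for any singular knot presented via Corollary \ref{singular} the right-hand side is zero, so $V$ vanishes on every singular knot with more than $n$ double points. I expect the main obstacle to be the geometric lemma of the second paragraph---in particular, verifying carefully that the ``clasp-absent'' resolution really is isotopic to the \emph{deletion} of the chord, rather than merely to some other local state, which requires controlling the embedded local model of the clasp and confirming that its removal does not disturb the remaining bands. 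Once that local analysis and the accompanying sign convention are pinned down, the sign bookkeeping and both directions of the equivalence are routine.
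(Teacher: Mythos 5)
Your proof is correct and takes essentially the same route as the paper: both directions rest on replacing the clasps of the chords in $D\setminus D'$ with singular clasp surgeries so that the subdiagram sum in Equation \ref{altsum} becomes the Vassiliev alternating sum over resolutions of a singular knot with more than $n$ double points, with Corollary \ref{singular} supplying the converse direction. Your explicit $\prod_j s(c_j)$ sign bookkeeping is a worthwhile refinement that the paper glosses over here (it only surfaces later, as the $(-1)^k$ factor in Lemma \ref{symbol}), but it does not constitute a different approach.
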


\begin{proof}
First assume that $V$ is a finite type invariant of order $\leq n$. We must prove that $V\Gamma$ is finite type of order $\leq n$. Suppose we have a pair of signed chord diagrams, $D'\subseteq D$, for which $\ord(D)-\ord(D') >n$. By replacing, in $\Gamma(D)$, all of the clasp surgeries for chords in $D\setminus D'$ with singular clasp surgeries, we obtain a singular knot with more than $n$ self intersections. The sum in Equation \ref{altsum} is then the alternating sum of our invariant over all resolutions of our singular knot, which must vanish because our invariant is finite type of order $\leq n$. 

Now, we assume that a knot invariant $V: \scr{K}\to \R$ is such that $V\Gamma$ is finite type of order $\leq n$. To see that $V$ is a finite type invariant of order $\leq n$, observe that Corollary \ref{singular} implies that any singular knot can be represented by a pair of signed chord diagrams $D'\subseteq D$, by taking $\Gamma(D)$, and replacing all of the clasp surgeries for chords in $D\setminus D'$ with singular clasp surgeries. This means any alternating sum of $V$ over resolutions for a singular knot with more than $n$ self intersections can be represented as the sum in Equation \ref{altsum}, which is zero by our assumption that $V\Gamma$ is finite type of order $\leq n$. This gives us our desired result. 
\end{proof}

It should be noted that Corollary \ref{singular} is slightly stronger than what is needed to prove Theorem \ref{invariants}. Indeed, given $s\in \{1,-1\}$, it suffices to check that $V\Gamma$ satisfies Equation \ref{altsum} only for diagram pairs $D'\subseteq D$ for which every chord in $D\setminus D'$ is of sign $s$. 

\begin{dfn}\label{notation}
We write $\scr{F}_n$ to denote the vector space of all functions $\scr{D}\to \R$ which are of finite type of order $n$, and we let $\scr{V}_n$ denote the subspace of $\scr{F}_n$ consisting of functions which factor through $\Gamma: \scr{D}\to \scr{K}$. We define $\scr{F} = \bigcup_{n = 1}^\infty \scr{F}_n$ and $\scr{V} = \bigcup_{n = 1}^\infty \scr{V}_n$. Theorem \ref{invariants} gives us a canonical isomorphism between $\scr{V}$ and the space of finite type invariants. We write $\scr{C}_n$ to denote the vector space of all functions $f: \scr{D}\to \R$ which are zero for all diagrams with order more than $n$. We define $\scr{C} = \bigcup_{n = 1}^\infty \scr{C}_n$. For $f: \scr{D}\to \R$, we define a function $C_f: \scr{D}\to \R$ by $$C_f(D) =  \sum_{H\subseteq D} (-1)^{\ord(G)-\ord(H)} f(H)$$
\end{dfn}

We will now begin to develop the connection between signed chord diagrams for knots, and the classical theory for finite type invariants in terms of chord diagrams. 

\begin{lem}\label{symbol}
Let $f\in V_n$ be a finite type invariant of order $n$. Let $D$ be a signed chord diagram of order $n$ with exactly $k$ strands of sign $-1$. Then the symbol of $f$ at the unsigned chord diagram underlying $D$ is equal to $(-1)^k C_f(D)$.
\end{lem}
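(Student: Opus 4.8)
The plan is to compute the symbol by realizing a singular knot for the underlying diagram $\underline{D}$ directly from the banded unknot $B(D)$, and then to resolve its double points into the ``clasped'' and ``deleted'' configurations, which are exactly the subdiagram contributions that appear in $C_f(D)$. Throughout, recall that the symbol (weight system) of an order-$n$ invariant at a degree-$n$ chord diagram $C$ is its value on any singular knot whose Gauss/chord diagram is $C$; this is well defined precisely because $f$ has order $n$.

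First I would realize $\underline{D}$ geometrically. Writing $s(i)\in\{-1,1\}$ for the sign of chord $i$ in $D$, apply the $s(i)$ singular clasp surgery to each band of $B(D)$. By the construction of $B(D)$ the clasps are stacked in the cyclic order of $D$, so the chord diagram of the resulting singular knot (with $n=\ord(D)$ double points) is exactly $\underline{D}$; hence its $f$-value equals $\mathrm{symbol}(f)(\underline{D})$. Note this value does not depend on the chosen signs $s(i)$, since each double point is merely a transverse self-intersection, so the left-hand side is genuinely a function of $\underline{D}$ alone.

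The key geometric input is that the double point produced by an $s$ singular clasp surgery interpolates between the $s$ clasp surgery and the deleted band. Comparing the parameterizations, the arcs $\eta_0,\eta_1$ reach $y=\tfrac12$ and merely touch (the double point), whereas the factor $3t(1-t)$ of the ordinary clasp surgery makes them overlap and clasp, and the straight arcs at $y\in\{0,1\}$ (no band) miss entirely. Thus one resolution of the double point yields the $s$-clasp and the other yields the unclasped (deleted) band, and the height wiggle $\tfrac12+st(\tfrac12-t)(1-t)$ arranges the crossing sign to be $s$. The resulting skein relation is
\begin{equation*}
f(\text{singular at } i)=s(i)\,\big[\,f(\text{$s(i)$-clasp at } i)-f(\text{$i$ deleted})\,\big].
\end{equation*}
Resolving all $n$ double points, each resolution contributes a factor $s(i)$ and splits as (clasp $-$ deleted). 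Iterating and expanding $\prod_i(\text{clasp}_i-\text{del}_i)$ over subsets, the term for a set $S$ of ``clasped'' chords is $(-1)^{n-|S|}f(D_S)$, where $D_S$ is the subdiagram keeping the chords of $S$ with their original signs. Hence
\begin{equation*}
\mathrm{symbol}(f)(\underline{D})=\Big(\prod_{i} s(i)\Big)\sum_{S}(-1)^{\,n-|S|}f(D_S)=\Big(\prod_{i} s(i)\Big)\,C_f(D),
\end{equation*}
and since $\prod_i s(i)=(-1)^k$ the claim follows. The main obstacle is the geometric verification in the third step: confirming that the two resolutions of the singular clasp are exactly the clasped and deleted configurations, and that the crossing sign is $s(i)$ rather than $-s(i)$, as it is precisely this sign that produces the factor $(-1)^k$.
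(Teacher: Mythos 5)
Your proposal is correct and takes essentially the same route as the paper's proof: both realize the underlying unsigned diagram by replacing each clasp of $\Gamma(D)$ with a singular clasp, identify the two resolutions of each double point with the clasped and deleted configurations (the positive resolution being the clasp exactly when $s(i)=+1$), and expand the alternating sum over resolutions into the sum over subdiagrams of $D$, with the negative chords producing the factor $(-1)^k$. Your write-up merely makes explicit the sign bookkeeping $\prod_i s(i) = (-1)^k$ that the paper compresses into its final sentence.
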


\begin{proof}
If we replace the clasps of $\Gamma(D)$ with singular clasps, we get a singular knot whose chord diagram is the underlying chord diagram for $D$. Therefore, the symbol of the unsigned underlying chord diagram, which is the alternating sum of $f$ over all resolutions, is the alternating sum of $f$ over all subdiagrams of $D$, where including a negative clasp counts as a negative resolution, and including a positive clasp counts as a positive resolution. This gives us  $(-1)^k C_f(D)$ when one considers the sign change caused by all negative clasps.
\end{proof}

\begin{lem}
Let $f\in \scr{F}_n$. Then $C_f\in \scr{C}_n$.
\end{lem}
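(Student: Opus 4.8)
The plan is to recognize that the operator $C_f$ is, up to taking the lower diagram $D'$ to be empty, literally the alternating sum appearing in the definition of finite type. First I would observe that for any signed chord diagram $D$,
$$C_f(D) = \sum_{H\subseteq D} (-1)^{\ord(D)-\ord(H)} f(H)$$
is exactly the left-hand side of Equation \ref{altsum} evaluated on the pair $\emptyset \subseteq D$: the empty diagram is a subdiagram of every diagram, so the sum over $\emptyset \subseteq H \subseteq D$ coincides with the sum over all subdiagrams $H\subseteq D$. Thus the only work is to certify that this instance of the defining condition is legitimately available.

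Next I would verify that the empty diagram is a bona fide signed chord diagram that may play the role of $D'$. It is closed under $\tau$, it carries the (trivial) cyclic ordering inherited from $D$, and it vacuously satisfies the requirement that $0$ lie outside the image of its sign function; moreover $\ord(\emptyset) = |\emptyset/\tau| = 0$. Hence $\emptyset \subseteq D$ is an admissible pair to which the finite-type condition defining $\scr{F}_n$ applies.

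Finally I would fix a diagram $D$ with $\ord(D) > n$. Then $\ord(D) - \ord(\emptyset) = \ord(D) > n$, so the hypothesis $f\in\scr{F}_n$ forces the alternating sum over $\emptyset\subseteq H\subseteq D$ to vanish, which is precisely the statement $C_f(D)=0$. Since this holds for every $D$ of order exceeding $n$, we conclude $C_f\in\scr{C}_n$. There is essentially no genuine obstacle in this argument; the only point demanding care is the bookkeeping that the empty diagram qualifies as a valid subdiagram, so that the defining condition of $\scr{F}_n$ can indeed be invoked with $D'=\emptyset$. Everything else is a direct matching of notation.
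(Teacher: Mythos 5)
Your proof is correct and is essentially identical to the paper's own argument: both take $D' = \varnothing$ in the defining condition of $\scr{F}_n$, observe that the resulting alternating sum is exactly the sum defining $C_f(D)$, and conclude $C_f(D)=0$ whenever $\ord(D)>n$. The only difference is that you spell out the (easy) check that the empty diagram is an admissible subdiagram, which the paper leaves implicit.
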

\begin{proof}
If $\ord(D) > n$ then Equation \ref{altsum} holds for the pair $\varnothing \subseteq D$, and the sum coincides with the sum which defines $C_f$. Therefore, $C_f(D) = 0$ when $\ord(D) > \ord(f)$. 
\end{proof}

\begin{lem}\label{inversion}
For any function $f: \scr{D}\to \R$, and any diagram $D$, we have $$ f(D) = \sum_{H\subseteq D} C_f(H) $$
\end{lem}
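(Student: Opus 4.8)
The plan is to recognize this as Möbius inversion on the Boolean lattice of subdiagrams and to verify it by a direct interchange of summation. First I would observe that a subdiagram $H \subseteq D$ is determined precisely by which chords of $D/\tau$ it contains, since $H$ must be closed under $\tau$. Consequently the poset of subdiagrams of $D$, ordered by inclusion, is isomorphic to the Boolean lattice $2^{D/\tau}$, with $\ord$ playing the role of the rank (cardinality) function. In particular, for any $G \subseteq D$ the interval $[G,D]$ is itself a Boolean lattice, isomorphic to $2^{(D/\tau)\setminus(G/\tau)}$, of rank $\ord(D) - \ord(G)$.

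Next I would substitute the definition of $C_f$ from Definition \ref{notation} into the right-hand side and then interchange the order of summation so as to group the terms by the innermost diagram $G$. This produces
$$\sum_{H \subseteq D} C_f(H) = \sum_{H \subseteq D}\ \sum_{G \subseteq H} (-1)^{\ord(H) - \ord(G)} f(G) = \sum_{G \subseteq D} f(G) \left( \sum_{G \subseteq H \subseteq D} (-1)^{\ord(H) - \ord(G)} \right).$$

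The heart of the argument is the inner sum. Using the identification of $[G,D]$ with a Boolean lattice of rank $m := \ord(D) - \ord(G)$, the inner sum becomes $\sum_{k=0}^{m} \binom{m}{k} (-1)^k = (1-1)^m$, which equals $0$ when $m > 0$ and equals $1$ when $m = 0$. Hence every term with $G \subsetneq D$ cancels, and the only surviving contribution comes from $G = D$, which is exactly $f(D)$. This establishes the claimed identity.

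I do not expect a genuine obstacle here; the single point requiring a little care is confirming that the subdiagram relation really is a Boolean lattice, that is, that closure under $\tau$ amounts to freely choosing a subset of the chords, so that each interval $[G,D]$ carries the binomial structure that forces the alternating sum to collapse. Once that combinatorial structure is recorded, the result follows by the standard inclusion–exclusion cancellation.
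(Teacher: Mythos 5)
Your proof is correct and is essentially the paper's own argument: the paper's one-line proof (``the coefficient for $f(D')$ is an alternating sum over a cube of dimension $\ord(D)-\ord(D')$, which is zero unless $D=D'$'') is exactly your interchange of summation followed by the collapse of the alternating sum $\sum_{k=0}^m\binom{m}{k}(-1)^k = (1-1)^m$ over the Boolean interval $[G,D]$. You have simply written out the details, including the correct reading of the sign in Definition~\ref{notation} as $(-1)^{\ord(H)-\ord(G)}$, which the paper intends despite its typographical slip.
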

\begin{proof}
For every $D'\subseteq D$, the coefficient for $f(D')$ is an alternating sum over a cube of dimension $\ord(D) - \ord(D')$, which is zero unless $D = D'$. 
\end{proof}

\begin{dfn}
In a chord diagram, an isolated chord is a chord which does not cross any other chord when the chords are drawn as straight lines on a circular disk. 
\end{dfn}

\begin{lem}\label{moveone}
If a signed chord diagram $D$ has an isolated chord $y$, and $D'$ is $D$ with the isolated chord removed, then $\Gamma(D')$ is isotopic to $\Gamma(D)$. 
\end{lem}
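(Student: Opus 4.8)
The plan is to show that an isolated chord corresponds, in the geometric realization, to a clasp (or, if the chord has sign $0$, a band) that can be isotoped away without interacting with any of the other clasps or bands. The crucial feature of an isolated chord $y$ is that its two endpoints are \emph{cyclically adjacent} with respect to the ordering induced by the rest of the diagram: because $y$ crosses no other chord, there is a base point $*$ for which the two endpoints of $y$ are consecutive in the linear order $<_*$, and all the other chords of $D$ lie entirely on one side. By the base-point-independence lemma proved above, I am free to choose exactly such a base point when building $\Gamma(D)$.

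\emph{First} I would set up the construction of $B(D)$ using a base point $*$ placed immediately before the first endpoint of $y$, so that $y$'s two endpoints $i$ and $\tau(i)$ occupy the two smallest positions in $<_*$ and are consecutive. In the construction from Definition~\ref{surgery}'s preamble, the band $\beta_i$ associated to $y$ is then embedded via a square $q_i([0,1]^2)\subseteq D^2$ whose two boundary intervals $\iota(I_i)$ and $\iota(I_{\tau(i)})$ are adjacent arcs of $\partial D^2$ with no other marked intervals between them. Since the chord is isolated, I can choose the embeddings $\{q_j\}$ for all the remaining chords $j$ so that $q_i([0,1]^2)$ is disjoint from all of them, and so that $q_i$ cuts off a sub-disk of $D^2$ meeting $\gamma$ only along $I_i\cup I_{\tau(i)}$ and containing no other band attachments. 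The image $\beta_i([0,1]^2)=\phi(q_i([0,1]^2),f(i))$ is then a band bounding, together with the arc of $\gamma$ between $I_i$ and $I_{\tau(i)}$, an embedded disk in $S^3$ disjoint from the rest of the banded knot.

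\emph{Next} I would handle the two cases of the sign $s(y)$. If $s(y)=0$, the chord contributes an unmodified band $\beta_i$, and the above disk exhibits this band as an isotopically trivial finger that can be retracted; removing it yields exactly $B(D')$ with the same remaining bands, giving isotopy of $\Gamma(D')$ and $\Gamma(D)$. If $s(y)=\pm1$, then $\Gamma(D)$ is obtained by performing an $s(y)$ clasp surgery on this band; because the band and its thickening $\theta([0,1]^3)$ can be isolated inside the sub-disk described above, the resulting clasp is a small kink linking only the arc of $\gamma$ adjacent to it, and it bounds an embedded disk meeting the rest of $\Gamma(D)$ nowhere. A Reidemeister-I–type isotopy (the clasp surgery on an isolated band produces a local curl rather than a genuine clasp with a second strand) then removes it, and the result is isotopic to $\Gamma(D')$.

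\emph{The main obstacle} I expect is making precise the claim that clasp surgery on an \emph{isolated} band produces something isotopically trivial: for a non-isolated chord the clasp links a second strand of the knot and cannot be undone, so the whole argument hinges on verifying that when the band cuts off a disk disjoint from the rest of the knot, the two strands created by $\eta_0$ and $\eta_1$ in Definition~\ref{surgery} clasp only \emph{each other} and bound a disk, hence cancel. I would make this rigorous by examining the parameterizations $\eta_0,\eta_1$ inside the thickened cube $\theta([0,1]^3)$ and checking directly that the arc they trace, together with the cut-off disk, bounds an embedded disk in the complement of the rest of $\Gamma(D)$; the adjacency of $I_i$ and $I_{\tau(i)}$ guaranteed by the isolatedness is precisely what ensures no other part of the knot threads through this disk.
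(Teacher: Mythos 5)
There is a genuine gap at the very first step: you claim that because $y$ is isolated you may choose a base point so that the two endpoints of $y$ become consecutive, with all other chords lying on one side. This is false. Isolation only means that no chord crosses $y$; it does not prevent other chords from lying on \emph{both} sides of $y$. For example, with points $a<b<c<d<e<f$ in cyclic order and chords $y=\{a,d\}$, $z_1=\{b,c\}$, $z_2=\{e,f\}$, the chord $y$ is isolated, yet both arcs between $a$ and $d$ contain endpoints of other chords, so no choice of base point makes the endpoints of $y$ adjacent. (The base-point-independence lemma lets you rotate the cyclic order; it never lets you change it.) Your untwisting argument, a Reidemeister-I-type removal of a curl, genuinely requires one side of the clasp to be a bare arc of $\gamma$, so it proves the lemma only in that special case; this is not enough for the later applications (for instance Theorem \ref{rel1} and condition 1 of Theorem \ref{gen} invoke the lemma for arbitrary diagrams with an isolated chord, where nesting on both sides certainly occurs).

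A second, related problem is that the disk you exhibit cannot be the operative ingredient. The sub-disk of the fiber cut off by the band of $y$, cobounding with an arc of $\gamma$, exists for \emph{every} chord, isolated or not: distinct bands of $B(D)$ lie in distinct disk fibers of the unknot complement, so the interior of that sub-disk never meets the other bands or clasps. If its existence sufficed to undo the clasp, the diagram with two crossing positive chords would give the unknot, whereas it gives the trefoil. What isolation actually buys, and what the paper's proof uses, is a separating \emph{sphere}: sweeping the band of $y$ once around the fibers of the unknot complement traces out an embedded $S^2\times[0,1]$ that meets $\Gamma(D)$ only at the clasp of $y$, with the chords nested inside $y$ on one side and those outside on the other (a chord crossing $y$ would force its band to puncture this swept sphere, which is exactly what isolation rules out). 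Rotating the ball on one side of this sphere through a full turn is ambient-isotopic to the identity in $S^3$ and undoes the clasp while carrying the enclosed tangle along rigidly. To repair your proof you need this sphere, or some equivalent device for rotating an entire side of the clasp, rather than the cobounding disk and a local curl move.
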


\begin{proof}
Since the chord is isolated, we can take the band of $B(D)$ corresponding to the isolated chord, and move it through one revolution along the fibers of the unknot complement, tracing out an $S^2\times[0,1]$ embedded in $S^3$ which intersects $\Gamma(D)$ exactly at the clasp corresponding to the isolated chord, and which separates the parts of the knot on either side of the clasp. Rotating one side of the complement of this thickened sphere relative to the other by a full rotation will undo the clasp but leave the rest of the knot unchanged. 
\end{proof}

\begin{thm}\label{rel1}
Let $D$ be a signed chord diagram with an isolated chord. Let $V: \scr{K}\to \R$ be a knot invariant. We have $C_{V\Gamma}(D) = 0$.
\end{thm}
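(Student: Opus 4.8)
The plan is to exploit the isolated chord to set up a sign-reversing pairing on the subdiagrams of $D$ over which $C_{V\Gamma}(D)$ is summed. Write the isolated chord as $y$, so that its two endpoints form a single $\tau$-orbit. Since every subdiagram is closed under $\tau$, each $H\subseteq D$ either contains both endpoints of $y$ or neither. I would therefore partition the subdiagrams of $D$ into pairs $(H,\,H\cup y)$, where $H$ ranges over those subdiagrams not containing $y$ and $H\cup y$ denotes $H$ with the two endpoints of $y$ adjoined. This partition accounts for every term in the sum
$$C_{V\Gamma}(D) = \sum_{H\subseteq D} (-1)^{\ord(D)-\ord(H)}\, V\Gamma(H).$$

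The key step is to show that the two members of each pair have the same value under $V\Gamma$. Because $y$ is isolated in $D$, it crosses no other chord of $D$, and hence it crosses no chord of any subdiagram either; in particular $y$ remains an isolated chord of $H\cup y$. Lemma \ref{moveone}, applied to the diagram $H\cup y$ and its isolated chord $y$, then gives that $\Gamma(H)$ is isotopic to $\Gamma(H\cup y)$. Since $V$ is a knot invariant, it follows that $V\Gamma(H) = V\Gamma(H\cup y)$ for every $H$ not containing $y$.

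Finally I would combine this with the sign bookkeeping. Adjoining the chord $y$ increases the order by exactly one, so $\ord(H\cup y) = \ord(H)+1$ and the signs $(-1)^{\ord(D)-\ord(H)}$ and $(-1)^{\ord(D)-\ord(H\cup y)}$ are opposite. Hence within each pair the contributions of $H$ and $H\cup y$ are equal in magnitude but opposite in sign, and cancel. Summing over all pairs yields $C_{V\Gamma}(D)=0$.

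The argument is short, and its only real substance is the uniform applicability of Lemma \ref{moveone}: the one point to check with care is that isolatedness of $y$ in $D$ descends to every subdiagram containing $y$. This is immediate, since deleting chords cannot create new crossings, so I do not expect any genuine obstacle.
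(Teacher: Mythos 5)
Your proof is correct and is essentially the paper's own argument: both split the terms of $C_{V\Gamma}(D)$ according to whether the subdiagram contains $y$, apply Lemma \ref{moveone} to identify $\Gamma(H)$ with $\Gamma(H\cup y)$, and cancel via the opposite signs coming from $\ord(H\cup y)=\ord(H)+1$. Your explicit check that isolatedness of $y$ descends to every subdiagram is a point the paper leaves implicit, but the route is the same.
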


\begin{proof}
The sum for $C_{V\Gamma}(D)$ can be broken into two parts, the alternating sum over all subdiagrams which have the isolated chord, and the alternating sum over those that don't. \begin{align} (-1)^{\ord(D)}C_{V\Gamma}(D) =  \sum_{H\subseteq D} (-1)^{\ord{H}}V\Gamma(H) \nonumber\\
= \sum_{y\in H \subseteq D}(-1)^{\ord{H}}V\Gamma(H)+ \sum_{y\not\in H \subseteq D}(-1)^{\ord{H}}V\Gamma(H)\nonumber
\end{align}  Since removing the chord $y$ does not change the knot type, but it changes the order by one, we have
$$ (-1)^{\ord{D}}C_{V\Gamma}(D) =  -\sum_{y\not\in H' \subseteq D}(-1)^{\ord{H'}}V\Gamma(H')+ \sum_{y\not\in H \subseteq D}(-1)^{\ord{H}}V\Gamma(H) = 0$$ 
\end{proof}

\begin{lem}\label{pair}
Let $D_{\pm}$ be obtained from $D$ by inserting a pair of parallel, adjacent chords with opposite sign. Then $\Gamma(D)$ is isotopic to $\Gamma(D_{\pm})$
\end{lem}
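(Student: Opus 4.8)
The plan is to reduce the statement to a purely local isotopy supported in a small ball, and then to recognize that local picture as a cancelling Reidemeister-II pair. Name the two inserted chords $y_+$ and $y_-$, with signs $+1$ and $-1$. Because they are parallel and adjacent, their four endpoints occupy two short arcs of $S^1$ with no other endpoints of $D$ lying between the members of either pair. I would exploit the freedom in the construction of $B(D_\pm)$: choose the injection $f$, the intervals $I_i$, and the embeddings $q_{y_+}, q_{y_-}$ so that the two bands $\beta_{y_+}, \beta_{y_-}$ lie inside a single small ball $U\subseteq S^3$, running parallel to one another over adjacent subarcs of the unknot, with $U$ disjoint from all other bands and from the rest of $\gamma(S^1)$. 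Since the chords of $D$ are untouched by the insertion, all diagram data for $D_\pm$ outside $U$ can be taken to agree with that for $D$; hence it suffices to analyze what happens inside $U$.

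Next I would perform the two clasp surgeries inside $U$. By the formulas of Definition \ref{surgery}, applying a $+1$ surgery to $\beta_{y_+}$ and a $-1$ surgery to the adjacent parallel band $\beta_{y_-}$ replaces the trivial two-strand tangle in $U$ by a tangle consisting of two adjacent clasps of opposite handedness. The central claim is that this tangle is isotopic rel $\partial U$ to the trivial tangle: the $+1$ clasp and the $-1$ clasp form a cancelling Reidemeister-II pair, so the interlocking hooks can be pulled apart without touching $\partial U$. Applying the resulting ambient isotopy, supported in $U$, carries $\Gamma(D_\pm)$ to the knot obtained from the $D$-data with no clasps in $U$, which by the previous paragraph is precisely $\Gamma(D)$.

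I expect the main obstacle to be verifying, under the sign conventions of Definition \ref{surgery} together with the orientability and parallelism hypotheses, that opposite signs on parallel adjacent chords really produce a \emph{cancelling} rather than a reinforcing clasp pair, and that the configuration inside $U$ is genuinely the standard Reidemeister-II tangle. Concretely, I would track the framings induced by $\phi$ and by the $q_i$, and confirm that the $\eta_0/\eta_1$ strands coming from the two surgeries sit in antiparallel position, so that their crossings occur in oppositely signed pairs that annihilate. Once this local cancellation is checked, no global difficulties arise, since $U$ meets the knot in a single trivial tangle and the isotopy is supported there.
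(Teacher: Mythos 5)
Your proposal is correct and matches the paper's argument: the paper's proof is exactly the observation that a $+1$ clasp connected in parallel to a $-1$ clasp forms a trivial tangle, so the two clasps cancel by an isotopy. Your additional scaffolding (localizing both bands in a ball $U$ disjoint from the rest of the data and performing the cancellation rel $\partial U$) just makes explicit what the paper leaves implicit, so the two proofs are essentially the same.
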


\begin{proof}
We can cancel the clasps from the two chords because the tangle obtained by connecting a +1 clasp to a -1 clasp in parallel is trivial. 
\end{proof}

\begin{thm}\label{rel2}
Let $D_{\pm}$ be obtained from $D$ by inserting a pair of parallel, adjacent chords with opposite sign, $y_+$ and $y_-$. Let $D_+\subseteq D_{\pm}$ consist of $D$ with only the additional positive chord, and let $D_-\subseteq D_{\pm}$ consist of $D$ with only the additional negative chord. If $V: \scr{K}\to \R$ is a knot invariant, then $$C_{V\Gamma}(D_+) + C_{V\Gamma}(D_-) + C_{V\Gamma}(D_\pm) = 0$$
\end{thm}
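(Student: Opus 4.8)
The plan is to expand all three quantities directly from the definition $C_{V\Gamma}(E)=\sum_{H\subseteq E}(-1)^{\ord(E)-\ord(H)}V\Gamma(H)$ and then to organize the subdiagrams according to how they interact with the newly inserted pair. Every subdiagram $H\subseteq D_\pm$ is of exactly one of four kinds, depending on which of $y_+,y_-$ it contains: writing $H_0\subseteq D$ for the part of $H$ coming from the old diagram, the four kinds are $H_0$, $H_0\cup\{y_+\}$, $H_0\cup\{y_-\}$, and $H_0\cup\{y_+,y_-\}$. By definition the subdiagrams of $D_+$ are those of the first two kinds, those of $D_-$ are those of the first and third kinds, and those of $D_\pm$ comprise all four kinds. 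This classification is the backbone of the argument.

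Next I would record the sign bookkeeping, using $\ord(D_+)=\ord(D_-)=\ord(D)+1$ and $\ord(D_\pm)=\ord(D)+2$, together with the fact that adjoining $y_+$ and/or $y_-$ raises the order by the number of chords adjoined. Introducing the three auxiliary sums
\begin{align*}
A &= \sum_{H_0\subseteq D}(-1)^{\ord(D)-\ord(H_0)}V\Gamma(H_0), \\
P &= \sum_{H_0\subseteq D}(-1)^{\ord(D)-\ord(H_0)}V\Gamma(H_0\cup\{y_+\}), \\
N &= \sum_{H_0\subseteq D}(-1)^{\ord(D)-\ord(H_0)}V\Gamma(H_0\cup\{y_-\}),
\end{align*}
the four-way classification should yield $C_{V\Gamma}(D_+)=P-A$, $C_{V\Gamma}(D_-)=N-A$, and $C_{V\Gamma}(D_\pm)=2A-P-N$, at which point the three quantities sum to $0$ by inspection, since the $P$, $N$, and $A$ contributions each cancel.

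The single geometric input, and the step I expect to require the most care, is the identification of the fourth-kind contributions $H_0\cup\{y_+,y_-\}$ appearing in $C_{V\Gamma}(D_\pm)$. Here I would invoke Lemma \ref{pair}: because $y_+$ and $y_-$ were inserted as a parallel, adjacent pair of opposite sign, the pair remains parallel and adjacent inside every $H_0\cup\{y_+,y_-\}$ — deleting the other chords of $D$ can only remove endpoints from between them, never create a crossing — so $\Gamma(H_0\cup\{y_+,y_-\})$ is isotopic to $\Gamma(H_0)$, whence $V\Gamma(H_0\cup\{y_+,y_-\})=V\Gamma(H_0)$. This is precisely what collapses the fourth-kind sum to a second copy of $A$, producing the $2A$ in $C_{V\Gamma}(D_\pm)$ that absorbs the two copies of $-A$ coming from $D_+$ and $D_-$. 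The remaining work is the purely mechanical sign-tracking sketched above; the only point genuinely worth double-checking is that being \emph{parallel and adjacent} is preserved under passing to subdiagrams, which is exactly what licenses the uniform application of Lemma \ref{pair} across all $H_0\subseteq D$.
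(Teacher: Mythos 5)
Your proposal is correct and follows essentially the same route as the paper: both split the subdiagrams of $D_\pm$ into four classes according to which of $y_+,y_-$ they contain, invoke Lemma \ref{pair} to identify the ``both chords'' contribution with the ``neither chord'' contribution, and finish with sign bookkeeping (your $A$, $P$, $N$ decomposition is just a reorganized version of the paper's weighted sums). If anything, you are slightly more careful than the paper in noting explicitly that the parallel-adjacent property of the pair persists in every subdiagram $H_0\cup\{y_+,y_-\}$, which is what justifies applying Lemma \ref{pair} term by term.
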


\begin{proof}
The sum for $C_{V\Gamma}(D_{\pm})$ can be broken in to four parts, one part for each possible subset of the two parallel chords that remains in the subdiagram. \begin{align} (-1)^{\ord{D_\pm}}C_{V\Gamma}(D_\pm) =\nonumber\\
 \sum_{H\subseteq D}(-1)^{\ord{H}} V\Gamma(H) + \sum_{y_+\in H\subseteq D_+}(-1)^{\ord{H}} V\Gamma(H) +\nonumber\\
  \sum_{y_-\in H\subseteq D_-}(-1)^{\ord{H}} V\Gamma(H)+ \sum_{y_+,y_-\in H\subseteq D_{\pm}}(-1)^{\ord{H}} V\Gamma(H)\nonumber\end{align} By Lemma \ref{pair}, the part of the sum which has both chords from the pair is the same as the part of the sum with no chords from the pair. Thus, we have \begin{align} (-1)^{\ord{D_\pm}}C_{V\Gamma}(D_\pm) =\nonumber\\
 2\sum_{H\subseteq D}(-1)^{\ord{H}} V\Gamma(H) + \sum_{y_+\in H\subseteq D_+}(-1)^{\ord{H}} V\Gamma(H) +  \sum_{y_-\in H\subseteq D_-}(-1)^{\ord{H}} V\Gamma(H)\nonumber\\
 = \sum_{H\in D_+} (-1)^{\ord{H}} V\Gamma(H) + \sum_{H\in D_-} (-1)^{\ord{H}} V\Gamma(H)\nonumber\\
 = (-1)^{\ord(D_+)}C_{V\Gamma}(D_+) +  (-1)^{\ord(D_-)}C_{V\Gamma}(D_-)\nonumber\end{align}
 which simplifies to $$ C_{V\Gamma}(D_\pm) + C_{V\Gamma}(D_+) + C_{V\Gamma}(D_-) = 0$$

\end{proof}

\begin{dfn}\label{move}
Let $D$ be a signed chord diagram, and let $x$ be a positive chord of $D$. Select a preferred side of the chord $x$, and a preferred endpoint. We construct a diagram $D^x$ by removing $x$ and adding chords in the following manner. First, mark each chord which crossed $x$. Move along the preferred side of $x$ in the direction towards the preferred endpoint of $x$. Each time you reach an endpoint of an chord which crossed $x$, make two points, one just before the endpoint, and one just after the endpoint. We will label these points $p_1,...,p_{2n}$ in the order in which we have constructed them, where $n$ is the number of chords which cross $x$ in $D$. Then, at the preferred endpoint of $x$, we mark $2n$ adjacent points, $q_1,...,q_{2n}$, where labels increase in the direction toward the preferred side. Add chords between $q_i$ and $p_i$ of sign $(-1)^{i+1}$ for all $i = 1,...,2n$. (See the figure for an example.)
\end{dfn}

\begin{figure}[h]
\centering
\includegraphics[scale = 0.5]{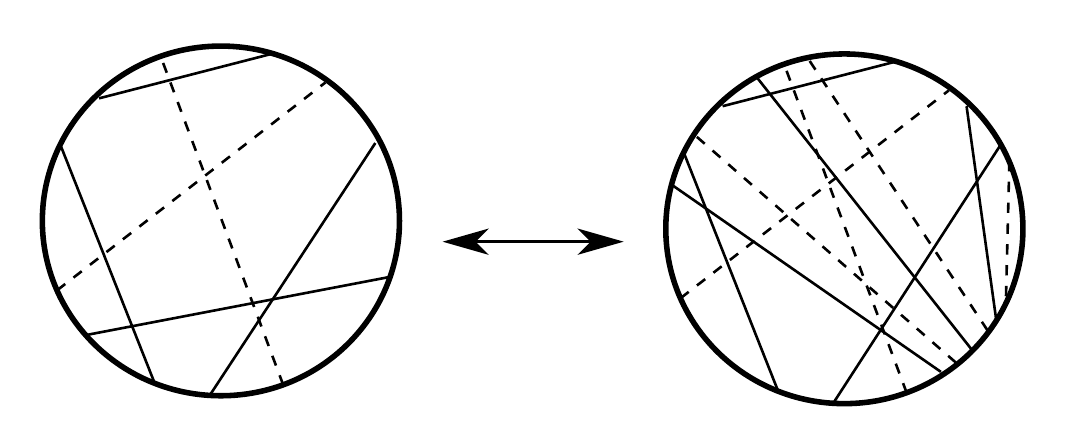}
\end{figure}

\begin{lem}\label{movethree}
Let $D$ and $D^x$ be related as in Definition \ref{move}. Then $\Gamma(D)$ is isotopic to $\Gamma(D^x)$.
\end{lem}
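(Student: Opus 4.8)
The plan is to prove this by an explicit geometric isotopy, in the same spirit as the proofs of Lemma \ref{moveone} and Lemma \ref{pair}, rather than by a purely combinatorial manipulation of the diagram. The starting point is to read off from the construction what the positive clasp corresponding to $x$ looks like inside $\Gamma(D)$: it is a finger of the knot that reaches over and hooks precisely those strands coming from the chords which cross $x$, and the cyclic-order compatibility built into the construction of $B(D)$ guarantees that these hooked strands are met, in order, as one travels along the preferred side of $x$ toward its preferred endpoint $b$. I would first isolate a tubular neighborhood of this finger together with the $n$ strands it hooks, so that the whole question reduces to a statement about a local tangle with everything outside that neighborhood held fixed throughout.

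Next I would carry out the isotopy as a finger slide: retract the tip of the finger of $x$ back toward $b$, releasing the hooked strands one at a time in the order in which they were encountered. The essential local computation is what happens when the retracting finger releases a single strand $z_j$. Here I would use the fact already exploited in Lemma \ref{pair}, namely that a $+1$ clasp and a $-1$ clasp placed in parallel cancel: by inserting such a canceling pair straddling the endpoint of $z_j$ on the preferred side, the finger can be made to let go of $z_j$ while leaving behind a positive clasp just before that endpoint and a negative clasp just after it. Iterating this release over $z_1,\dots,z_n$ deposits $2n$ small clasps, all emanating from near $b$, whose feet land at the points $p_1,\dots,p_{2n}$ and whose signs alternate as $+,-,+,-,\dots$, that is, sign $(-1)^{i+1}$ on the chord with foot $p_i$. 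Matching the order in which these clasps stack up near $b$ to the fanned-out points $q_1,\dots,q_{2n}$ then identifies the output of the isotopy with $\Gamma(D^x)$ exactly as specified in Definition \ref{move}.

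The step I expect to be the main obstacle is the sign and ordering bookkeeping in this final identification: one must check that releasing a strand really produces the pattern (positive before, negative after) and not the reverse, that the labeling $q_i \leftrightarrow p_i$ respects the ``increasing toward the preferred side'' convention, and that no two deposited clasps interfere as the finger is withdrawn. I would organize this by induction on $n$, the number of chords crossing $x$: the base case $n=0$ is exactly Lemma \ref{moveone} (an isolated chord is removable), and the inductive step is the release of the first strand encountered, which reduces the finger to one hooking $n-1$ strands while producing the pair $(q_1,p_1)$, $(q_2,p_2)$ with the claimed signs. The honest content of the lemma is thus concentrated in the single-strand release move, and the remaining work is verifying that the local models glue together consistently with the cyclic ordering.
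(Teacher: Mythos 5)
Your proposal is correct in substance, but it organizes the argument quite differently from the paper. The paper's proof of Lemma \ref{movethree} is a single global isotopy, stated in one sentence and carried almost entirely by Figure \ref{hypermove}: open up the clasp of $x$, loop the finger around the ends of everything that crosses it, and pull everything down, so that all $2n$ alternating clasps appear at once. You instead induct on the number of crossing chords, with Lemma \ref{moveone} as the base case and a local single-strand release as the inductive engine. Your bookkeeping is right: each release must deposit a positive clasp just before and a negative clasp just after the released endpoint, attached near the preferred endpoint, with the first-released pair outermost, and that is exactly the pattern of chords $(q_i,p_i)$ with signs $(-1)^{i+1}$ in Definition \ref{move}. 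Notably, your strand-at-a-time decomposition is essentially the picture the paper itself adopts later in its combinatorial machinery (the ``flip a loop over the gap'' move of Lemma \ref{sideview}, and the conjugation bookkeeping of Lemma \ref{basept}), so your route buys a reusable local move and transparent sign control, at the price of having to verify that the intermediate configurations --- a partially retracted finger amid already-deposited clasps --- glue coherently, a point the paper's one-shot isotopy never has to confront.

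One caution on the key step: as phrased, it misidentifies which pair is cancelling. A $+/-$ pair \emph{straddling} the endpoint of $z_j$ is not parallel-adjacent (the strand of $z_j$ separates the two clasps, and the two chords cross different sets of chords), so Lemma \ref{pair} does not license inserting it; indeed such a straddling pair is precisely \emph{not} trivial --- it carries exactly the linking that the finger surrenders, which is why the lemma does not collapse to the false statement that $x$ may simply be deleted. The correct mechanism is to insert an adjacent cancelling pair on one side of the endpoint (legal by the geometric content of Lemma \ref{pair}), after which the original hook together with one member of the inserted pair forms the straddling pair that is left behind, while the other member becomes the retracted finger. With that repair, your induction goes through at the same level of rigor as the paper's proof-by-figure.
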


\begin{proof}
To describe the isotopy, we open up a clasp, loop a finger around the end of everything that crosses it, then pull everything down. See Figure \ref{hypermove}. 
\end{proof}

\begin{figure}[h]
\caption{\label{hypermove} The isotopy takes a positive clasp to an alternating sequence of clasps.}
\centering
\includegraphics[scale = 0.5]{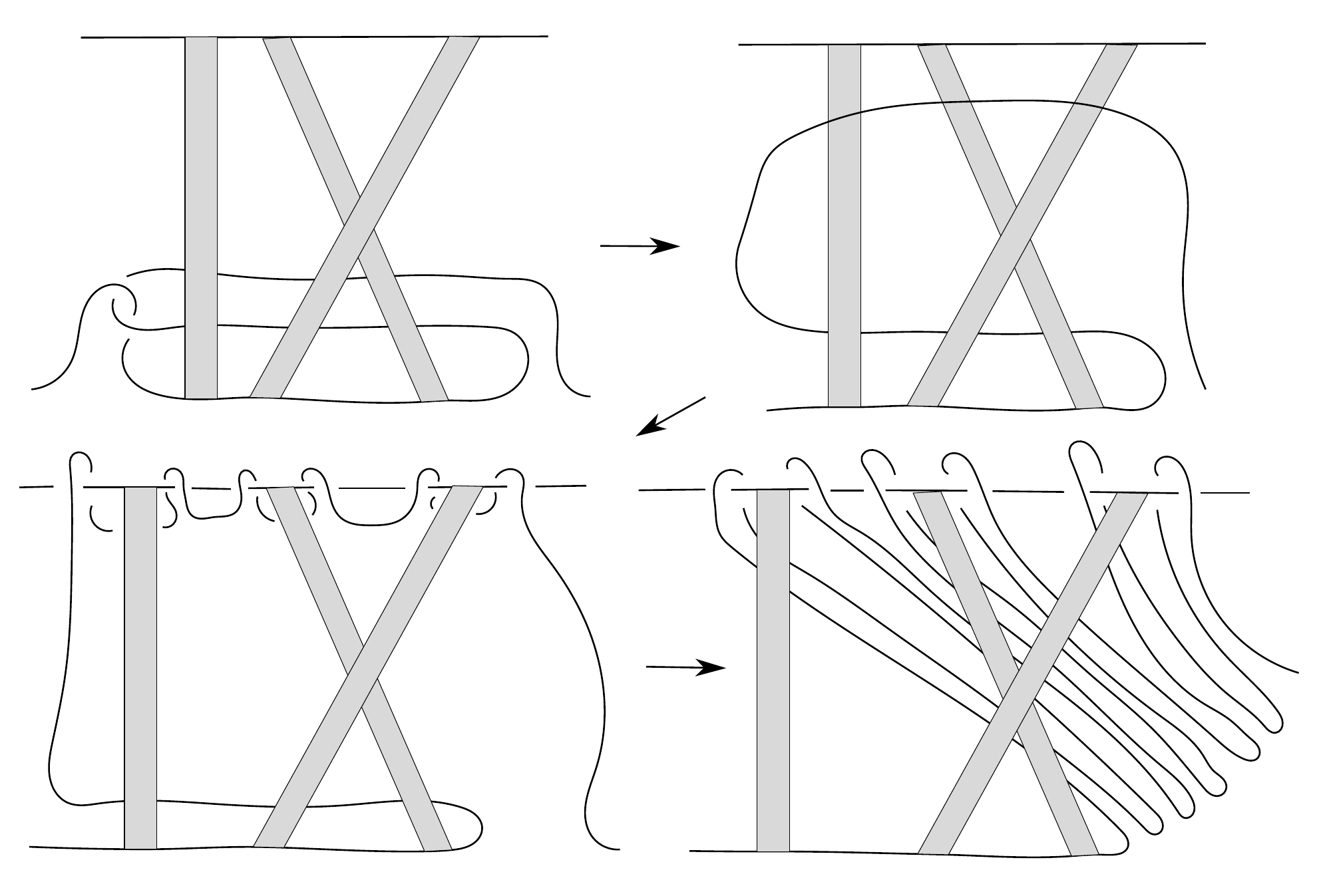}
\end{figure}

\begin{thm}\label{rel3}
Let $D$ be a signed chord diagram, and let $x$ be a chord in $D$. We mark four points on the boundary of the diagram, one to either side side of each endpoint of $x$, and we label them $p_1,p_2,p_3,p_4$, so that the pairs $(p_1,p_2)$ and $(p_3,p_4)$ are together at the endpoints, and the pairs $(p_1,p_3)$ and $(p_2,p_4)$ are each on some side of $x$. Now, select a point $q$ on the side of $x$ containing the points $p_2$ and $p_4$. We create diagrams $D_1,D_2,D_3,D_4$ in the following way. For $i \in \{1,2,3\}$, we add a positive chord $y$ between $q$ and $p_i$ to produce $D_i$, and $D_4$ is produced by adding a negative chord $y$ between $q$ and $p_4$. For $D_1$, we apply the procedure described in Definition \ref{move} to the chord $y$ with respect to the endpoint $q$ and the side of $y$ containing the points $p_3$ and $p_4$. This gives us a diagram $D_1^y$. We create a diagram $D_2^y$ similarly. Note that $D_2^y$ admits an inclusion into $D_1^y$. 

Let $V: \scr{K}\to \R$ be a knot invariant. The following equation holds true.

\begin{equation}C_{V\Gamma}(D_1) -C_{V\Gamma}(D_2) -C_{V\Gamma}(D_3) -C_{V\Gamma}(D_4) = \sum_{ H \subseteq D_1^y, H\not\subseteq D_2^y, \text{\emph{ord}} (H) \geq 2 + \text{\emph{ord}} (D)} C_{V\Gamma}(H)\label{fourterm}\end{equation}
\end{thm}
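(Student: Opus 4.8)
The plan is to mirror the bookkeeping strategy already used in the proofs of Theorems \ref{rel1} and \ref{rel2}: expand each $C_{V\Gamma}$ as an alternating sum over subdiagrams, then use the isotopy lemmas (here Lemma \ref{movethree}, together with Lemma \ref{pair}) to identify which terms geometrically agree, so that the combinatorial alternating sums collapse onto one another. The left-hand side of Equation \ref{fourterm} combines four quantities built from $D_1,\dots,D_4$, each of which is $D$ with a single extra chord $y$ attached at $q$; the signs of $y$ are $+1$ for $D_1,D_2,D_3$ and $-1$ for $D_4$, and the endpoints $p_1,p_2,p_3,p_4$ are arranged so that the four chords differ only in which endpoint $y$ terminates at and with what sign. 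The key input is that $D_1$ and $D_2$, after applying the Definition \ref{move} procedure to $y$, become $D_1^y$ and $D_2^y$ with $\Gamma(D_1)\simeq\Gamma(D_1^y)$ and $\Gamma(D_2)\simeq\Gamma(D_2^y)$ by Lemma \ref{movethree}, and that $D_2^y$ includes into $D_1^y$.

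First I would write out $(-1)^{\ord(D_i)}C_{V\Gamma}(D_i)=\sum_{H\subseteq D_i}(-1)^{\ord H}V\Gamma(H)$ for each $i$, splitting each such sum according to whether the extra chord $y$ is present in $H$ or not. The ``$y\notin H$'' parts all equal the same sum over $\sum_{H\subseteq D}(-1)^{\ord H}V\Gamma(H)$, so with the signs $+,-,-,-$ on the left-hand side these four copies combine to cancel down to $-2$ copies (or, depending on parity conventions, must be tracked carefully against the $(-1)^{\ord}$ normalizations). The genuinely geometric step is to handle the ``$y\in H$'' parts: here I would invoke Lemma \ref{movethree} to replace, in $D_1$ and $D_2$, the single positive chord $y$ by the alternating family of chords produced by Definition \ref{move}, converting the sums over subdiagrams of $D_1,D_2$ containing $y$ into sums over subdiagrams of $D_1^y,D_2^y$ that meet the new chord family. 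Because $V\Gamma$ is invariant under these isotopies, the terms are preserved; what changes is the index set over which we sum, and the difference between the $D_1^y$ index set and the $D_2^y$ index set is exactly $\{H\subseteq D_1^y: H\not\subseteq D_2^y\}$, which is what appears on the right-hand side of Equation \ref{fourterm}.

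The main obstacle I anticipate is the careful sign and order-parity accounting. Each diagram $D_i^y$ has order strictly larger than $D$ (the Definition \ref{move} procedure replaces one chord by $2n$ chords), so the normalizing factors $(-1)^{\ord(D_i)}$ versus $(-1)^{\ord(H)}$ shift, and one must verify that the constraint $\ord(H)\ge 2+\ord(D)$ on the right-hand side emerges correctly rather than, say, $\ge 1+\ord(D)$; this threshold should come from the fact that any $H\subseteq D_1^y$ with $H\not\subseteq D_2^y$ must contain at least one chord beyond those of $D_2^y$ together with the structure forced by $\tau$-closure. I would also need to confirm that the $D_3$ and $D_4$ contributions pair off against the $\ord(H)<2+\ord(D)$ terms of the $D_1^y$ expansion via Lemma \ref{pair} (the $y_+/y_-$ cancellation), since $D_3$ uses endpoint $p_3$ and $D_4$ uses the opposite sign at $p_4$, exactly the opposite-sign parallel configuration that Lemma \ref{pair} trivializes. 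Assembling these cancellations so that precisely the stated right-hand sum survives is the delicate part; the underlying isotopies are all supplied by the preceding lemmas.
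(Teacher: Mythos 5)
Your proposal follows essentially the same route as the paper's proof: expand the $C_{V\Gamma}$'s as alternating sums over subdiagrams, split on whether $y\in H$, convert the $y$-containing parts into sums over $D_1^y$ and $D_2^y$ via Lemma \ref{movethree} (padding with parallel pairs via Lemma \ref{pair}), identify the resulting index-set difference as $\{H\subseteq D_1^y,\ H\not\subseteq D_2^y\}$, and absorb the order-$(\ord(D)+1)$ terms as $C_{V\Gamma}(D_3)$ and $C_{V\Gamma}(D_4)$ on the left. The only deviations are cosmetic and do close up: the paper expands just $C_{V\Gamma}(D_1)-C_{V\Gamma}(D_2)$, so the leftover copies of the base sum you flag never arise, and the pairing of the lowest-order terms with $D_3$ and $D_4$ is by direct equivalence of signed chord diagrams (the two chords of $D_1^y\setminus D_2^y$ are a positive chord ending at $p_3$ and a negative chord ending at $p_4$), not by an application of Lemma \ref{pair}.
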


\begin{proof}
The sums for $C_{V\Gamma}(D_1)$ and $ C_{V\Gamma}(D_2)$ can each be broken into two parts, one for the subdiagrams which contain $y$ and one for those that don't. If $y$ is removed, the two diagrams become the same, so the terms coming from subdiagrams without $y$ cancel in $C_{V\Gamma}(D_1)- C_{V\Gamma}(D_2)$. We have \begin{align}
(-1)^{\ord{D_1}}(C_{V\Gamma}(D_1)- C_{V\Gamma}(D_2)) = \sum_{H\subseteq D_1}(-1)^{\ord{H}} V\Gamma(H) - \sum_{H\subseteq D_2}(-1)^{\ord{H}} V\Gamma(H) = \nonumber\\
\sum_{y\in H\subseteq D_1}(-1)^{\ord{H}} V\Gamma(H) + \sum_{y\not\in H\subseteq D_1}(-1)^{\ord{H}} V\Gamma(H) \nonumber\\ -\sum_{y\in H\subseteq D_2}(-1)^{\ord{H}} V\Gamma(H) - \sum_{y\not\in H\subseteq D_2}(-1)^{\ord{H}} V\Gamma(H)\nonumber\\
 = \sum_{y\in H\subseteq D_1}(-1)^{\ord{H}} V\Gamma(H) - \sum_{y\in H\subseteq D_2}(-1)^{\ord{H}} V\Gamma(H) \nonumber
\end{align} The remaining terms all have $y$, so we can apply the procedure from Definition \ref{move} and insert parallel chords as needed, which gives us an alternating sum over subdiagrams of $D_1^y$ and $D_2^y$ which contain all of the chords that were added in the procedure from Definition \ref{move}. 

\begin{align}
C_{V\Gamma}(D_1)- C_{V\Gamma}(D_2) = (-1)^{\ord{D_1}}\left(\sum_{y\in H\subseteq D_1}(-1)^{\ord{H}} V\Gamma(H) - \sum_{y\in H\subseteq D_2}(-1)^{\ord{H}} V\Gamma(H)\right) \nonumber\\
 =  \sum_{(D_1^y\setminus D)\subseteq H\subseteq D_1^y}(-1)^{\ord{D_1^y} -\ord{H}} V\Gamma(H) - \sum_{(D_2^y\setminus D)\subseteq  H\subseteq D_2^y}(-1)^{\ord{D_2^y} -\ord{H}} V\Gamma(H) \nonumber\\
 = \sum_{D \subseteq H \subseteq D_1^y} C_{V\Gamma}(H) - \sum_{D \subseteq H \subseteq D_2^y} C_{V\Gamma}(H) \nonumber
\end{align}

 The last equality holds because in each sum, the coefficient of $V\Gamma(H)$ for any subdiagram of $H\subseteq D_1^y$ is given by an alternating sum over a cube of dimension equal to the number of chords missing from $H$ that are also missing from $D$. This will cancel out to zero unless no chords are missing. We can now rearrange the equations by cancelling out what we can and combining all the terms of lowest order on the left side. This gives us the equation we wanted to prove $$ C_{V\Gamma}(D_1) -C_{V\Gamma}(D_2) -C_{V\Gamma}(D_3) -C_{V\Gamma}(D_4) = \sum_{ H \subseteq D_1^y, H\not\subseteq D_2^y, \ord (H) \geq 2 + \ord (D)} C_{V\Gamma}(H)$$
\end{proof}

 \newpage

\begin{thm}\label{gen}
Let $f\in \scr{F}$. Then $f\in \scr{V}$ if and only if the following conditions hold.
\begin{itemize}
\item[1)] If $D$ is a signed chord diagram with an isolated chord, then $$C_f(D) = 0$$
\item[2)] If $D_+,D_-$, and $D_\pm$ are as in Theorem \ref{rel2}, then $$ C_f(D_+) + C_f(D_-) + C_f(D_\pm) = 0 $$
\item[3)] If $D_1,D_2,D_3,D_4, D_1^y, D_2^y$ are as in Theorem \ref{rel3}, then $$ C_{f}(D_1) -C_{f}(D_2) -C_{f}(D_3) -C_{f}(D_4) = \sum_{ H \subseteq D_1^y, H\not\subseteq D_2^y, \text{\emph{ord}} (H) \geq 2 + \text{\emph{ord}} (D)} C_{f}(H) $$
\end{itemize}
\end{thm}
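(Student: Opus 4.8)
The plan is to treat the two implications separately. The forward implication is immediate: if $f \in \scr{V}$ then $f = V\Gamma$ for some knot invariant $V: \scr{K} \to \R$, and since $C_f = C_{V\Gamma}$ and $C$ is linear, conditions (1), (2), (3) are literally the conclusions of Theorems \ref{rel1}, \ref{rel2}, and \ref{rel3} applied to $V$. Note that those three theorems use only that $V$ is a knot invariant, not that it is of finite type, so nothing more is needed here. The substance is the reverse implication, which I would prove by induction on the least $n$ with $f \in \scr{F}_n$, peeling off one order at a time using the classical fundamental theorem of finite type invariants.

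For the base case, when $f$ is of order $0$, the hypothesis gives $C_f \in \scr{C}_0$, so $C_f$ is supported on the empty diagram and Lemma \ref{inversion} makes $f$ constant, hence trivially in $\scr{V}$. For the inductive step I would extract the top-order data of $C_f$ as a function $W$ on unsigned chord diagrams of order $n$, setting $W(\bar D) = (-1)^k C_f(D)$ for any signing $D$ of $\bar D$ with $k$ negative chords, and then verify that $W$ is a weight system. Independence of the chosen signing reduces to showing that reversing the sign of a single chord $c$ of an order-$n$ diagram negates $C_f$; this is exactly condition (2) with base diagram $D \setminus \{c\}$ and the chord reinserted as a parallel adjacent $\pm$ pair, the remaining term $C_f(D_\pm)$ vanishing because it has order $n+1$ while $C_f \in \scr{C}_n$. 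The one-term (isolated-chord) relation is condition (1), since an isolated chord of $\bar D$ stays isolated in every signing $D$. The four-term relation is the top-order specialization of condition (3): taking the base diagram there to have order $n-1$ forces every $H$ on the right-hand side to have order at least $n+1$, so that side vanishes and one is left with $C_f(D_1) - C_f(D_2) - C_f(D_3) - C_f(D_4) = 0$.

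Granting that $W$ is a weight system, the classical fundamental theorem supplies a finite type invariant $V_0$ of order $\le n$ whose symbol is $W$. Then $V_0\Gamma \in \scr{V}_n$ satisfies (1), (2), (3) by the forward implication already proved, so $g := f - V_0\Gamma$ satisfies them by linearity of $C$. By construction $C_g$ vanishes in order $n$, and since $g \in \scr{F}_n$ already forces $C_g \in \scr{C}_n$, we conclude $C_g \in \scr{C}_{n-1}$; a short computation with the inversion formula, dual to the proof that $f \in \scr{F}_n$ implies $C_f \in \scr{C}_n$, then shows that $C_g \in \scr{C}_{n-1}$ implies $g \in \scr{F}_{n-1}$. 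The inductive hypothesis gives $g \in \scr{V}$, whence $f = g + V_0\Gamma \in \scr{V}$, completing the induction.

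The hard part will be the claim that $W$ is a weight system, and within it the sign bookkeeping identifying the top-order form of condition (3) with the standard four-term relation: one must verify that the pattern $D_1 - D_2 - D_3 - D_4$, the convention making $y$ positive in $D_1, D_2, D_3$ but negative in $D_4$, and the $(-1)^k$ normalization all conspire to reproduce 4T in its chord-diagram form, a finite but delicate diagram chase governed by the configuration of $p_1, \dots, p_4$ and $q$ in Theorem \ref{rel3}. The other essential input is the classical realization of weight systems, which I would invoke as a black box; since the present theorem refines rather than supersedes the classical one, using it is legitimate and not circular.
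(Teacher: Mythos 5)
Your proposal is correct and takes essentially the same route as the paper: necessity is Theorems \ref{rel1}, \ref{rel2}, and \ref{rel3}, and for sufficiency both arguments induct on the order, extract from the top-order values of $C_f$ a weight system via the $(-1)^k$ normalization (condition 2 giving sign well-definedness, condition 1 the one-term relation, and the top-order collapse of condition 3 the four-term relation), and then invoke the classical fundamental theorem. The only difference is packaging: the paper runs a dimension count through the injections $\scr{V}_n/\scr{V}_{n-1}\to \scr{X}_n/\scr{X}_{n-1}\to\{\text{weight systems}\}$, whereas you unpack that same step constructively by realizing the weight system as the symbol of an invariant $V_0$ and subtracting $V_0\Gamma$ to drop the order, with your lemma that $C_g\in\scr{C}_{n-1}$ forces $g\in\scr{F}_{n-1}$ being exactly what the paper's injectivity claim implicitly requires.
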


\begin{proof}
By Theorems \ref{rel1}, \ref{rel2}, and \ref{rel3}, $f$ will satisfy the three conditions if it is in $\scr{V}$. To prove the converse, let $\scr{X}_n$ be the subspace of $\scr{F}_n$ consisting of functions which satisfy conditions 1 through 3. We have an injective linear map $\scr{V}_n\to \scr{X}_n$ and this induces an injective map $\scr{V}_n/\scr{V}_{n-1} \to \scr{X}_n/\scr{X}_{n-1}$. Furthermore, we can prove that $\scr{X}_n/\scr{X}_{n-1}$ injectively maps into the space of weight functions on chord diagrams which satisfy the classical one and four term relations. To see why this is true, observe that the second equation allows us to represent vectors only by diagrams with positive chords, and the other two equations reduce to the classical one and four term relations. Finally, the fundamental theorem of finite type invariants \cite{IntroToVass} states that this space of weight functions is isomorphic to $\scr{V}_n/\scr{V}_{n-1}$. Therefore, our injective maps must be isomorphisms. By induction, this implies that our original linear map $\scr{V}_n\to \scr{X}_n$ was an isomorphism. 
\end{proof}

This theorem generalizes the fundamental theorem of finite type invariants by providing a kind of weight system which classifies all finite type invariants, rather than just the associated graded object. 

As a side note, one interesting application for signed chord diagrams is the construction of a relatively sparse family of knots such that any finite type invariant can be determined by its values on that family.

\begin{dfn}
We say a knot is totally positive (resp. totally negative) if it can be represented by a signed chord diagram with only positive (resp. negative) chords. 
\end{dfn}

These knots have some interesting properties.

\begin{thm}
Let $V$ and $V'$ be two finite type invariants. If $V$ and $V'$ agree on all totally positive knots (or all totally negative knots) then they agree on all knots. 
\end{thm}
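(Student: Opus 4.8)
The plan is to pass to the difference $W = V - V'$, which is again a finite type invariant (the $\scr{V}_n$ are nested subspaces, so $W \in \scr{V}$) of some finite order $\le n$, and to show that $W$ vanishes identically once it vanishes on all totally positive knots; the totally negative hypothesis will be handled symmetrically at the end. I would argue by induction on the order $n$. For the base case $n = 0$, a finite type invariant of order $\le 0$ is unchanged under crossing changes and is therefore constant on $\scr{K}$; since the unknot is totally positive (it is $\Gamma$ of the empty diagram, which vacuously has only positive chords), this constant equals $W(\text{unknot}) = 0$, so $W = 0$.

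For the inductive step I would examine the symbol of $W$, the top-order weight system it determines on unsigned chord diagrams of order $n$. Given an unsigned order-$n$ diagram $\bar D$, let $D^+$ be the signed chord diagram with underlying diagram $\bar D$ and all chords positive. Applying Lemma \ref{symbol} to $f = W\Gamma$ with $k = 0$, the symbol of $W$ at $\bar D$ equals $C_{W\Gamma}(D^+) = \sum_{H \subseteq D^+}(-1)^{\ord(D^+)-\ord(H)} W\Gamma(H)$. The crucial observation is that every subdiagram $H \subseteq D^+$ is itself totally positive, so each $\Gamma(H)$ is a totally positive knot and $W\Gamma(H) = 0$ by hypothesis; hence every term of the alternating sum vanishes and the symbol of $W$ is identically zero on order-$n$ diagrams. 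Since the kernel of the symbol map on $\scr{V}_n$ is exactly $\scr{V}_{n-1}$ (the standard property of the symbol underlying the fundamental theorem), this forces $W$ to have order $\le n-1$, and the inductive hypothesis yields $W = 0$, i.e. $V = V'$ on all knots.

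For the totally negative hypothesis I would run the identical argument with $D^-$ (all chords negative) in place of $D^+$: Lemma \ref{symbol} now contributes a global sign $(-1)^n$, but since every subdiagram of $D^-$ is totally negative, each $W\Gamma(H)$ again vanishes and the symbol is zero, so the same induction applies. The main conceptual point—and the only step needing genuine care—is the inductive reduction: one must recognize that the full alternating sum computing the symbol of $W$ at $\bar D$ ranges over \emph{only} totally positive (resp. negative) subdiagrams, so that the vanishing hypothesis on knots feeds directly into the vanishing of every symbol. Everything else is the standard fact that killing all symbols drops the order by one, which lets the induction close.
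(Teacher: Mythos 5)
Your proof is correct, but it takes a genuinely different route from the paper's. The paper's own proof is a one-sentence appeal to property 2 of Theorem \ref{gen} (i.e.\ the relation of Theorem \ref{rel2}): $C_f(D_+)+C_f(D_-)+C_f(D_\pm)=0$ lets one rewrite $C_f$ of any diagram containing a negative chord in terms of diagrams with one fewer negative chord, at the cost of one extra chord; since $C_f\in\scr{C}_n$ kills diagrams of order greater than $n$, this rewriting terminates, and the inversion formula $f(D)=\sum_{H\subseteq D}C_f(H)$ of Lemma \ref{inversion} then expresses the value of a finite type invariant on an arbitrary knot as an explicit finite linear combination of its values on totally positive (or totally negative) knots; applied to $V-V'$ this finishes the argument with no induction. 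You instead induct on the order of $W=V-V'$: the observation that every subdiagram of an all-positive diagram is again totally positive makes every term of $C_{W\Gamma}(D^+)$ vanish, so by Lemma \ref{symbol} the symbol of $W$ is identically zero, and the standard fact that the kernel of the symbol map on $\scr{V}_n$ is $\scr{V}_{n-1}$ drops the order and closes the induction. Both arguments pivot on the same structural point---alternating sums over all-positive diagrams involve only totally positive knots---but they buy different things: the paper's rewriting is constructive, yielding the stronger statement that totally positive knots form a determining family with explicit coefficients (which is how the paper uses it), whereas your version is easier to make fully rigorous, since the paper leaves the termination of its rewriting implicit, while your two ingredients are standard and cleanly quotable. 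One pedantic note: Lemma \ref{symbol} is stated for invariants of order exactly $n$, but its proof applies verbatim to invariants of order at most $n$, which is how you use it.
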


\begin{proof}
By property 2 of Theorem \ref{gen}, we see that the value of a finite type invariant on a given diagram can be expressed as a linear combination of its values on totally positive, or totally negative diagrams. 
\end{proof}

\begin{thm}
For any knot $K$, there are only finitely many totally positive (or negative) diagrams $D$ with no isolated chords such that $\Gamma(D)$ is isotopic to $K$. 
\end{thm}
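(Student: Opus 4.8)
The plan is to reduce the statement to a bound on the order of the diagrams. For each fixed order $m$ there are only finitely many signed chord diagrams (such a diagram is a perfect matching on a cyclically ordered set of $2m$ points together with a sign for each of the $m$ chords, and there are finitely many of these up to equivalence). Since ``totally positive'' fixes all the signs and ``no isolated chords'' only restricts the allowable diagrams further, it suffices to produce a constant $N(K)$ such that every totally positive diagram $D$ with no isolated chords and $\Gamma(D)$ isotopic to $K$ has $\ord(D)\le N(K)$. Everything therefore comes down to showing that the order of such a diagram is controlled by a classical invariant of $K$.

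To do this I would attach to each totally positive diagram $D$ an explicit Seifert surface $F_D$ for $\Gamma(D)$: start from the disk bounded by the unknot $\gamma$ underlying $B(D)$ and, for each chord, glue in the band together with the positive clasp produced by the $+1$ clasp surgery of Definition \ref{surgery}. The orientability clause in the definition of a banded knot guarantees that $F_D$ is an oriented spanning surface, and its first Betti number is linear in $\ord(D)$, one generator being contributed by each chord. The hypothesis that $D$ has no isolated chords is exactly what prevents a band from being attached trivially: an isolated chord would contribute a boundary-parallel or split summand that could be discarded (this is the surface-level shadow of Lemma \ref{moveone}), so after this normalization every handle of $F_D$ genuinely raises $b_1$.

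The heart of the argument is to show that $F_D$ is \emph{genus minimizing}, so that the linear lower bound on $b_1(F_D)$ transfers to a linear lower bound on the Seifert genus of $\Gamma(D)$. Here is where positivity does the work: because every clasp is a $+1$ clasp, $F_D$ should be realizable as the disk with positive Hopf bands plumbed on according to the crossing pattern of the chords, so that the symmetrized Seifert form is (negative) definite and $F_D$ is a fibre surface in the sense of Stallings; equivalently $F_D$ is a quasipositive surface and Rudolph's slice--Bennequin theorem applies. Either route yields $g(\Gamma(D)) \ge c\,\ord(D)$ for a fixed $c>0$, and in fact $|\sigma(\Gamma(D))|\ge c\,\ord(D)$, which already suffices. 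Since the genus and the signature are knot invariants, the inequality $c\,\ord(D)\le g(K)$ (equivalently $c\,\ord(D)\le \tfrac12|\sigma(K)|$) produces the desired bound $N(K)$, and finiteness follows from the counting in the first paragraph.

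I expect the genus-minimality step to be the main obstacle. The clasps are not presented in a standard braided position, since the drawing procedure forces strands of later clasps to pass under earlier ones, and the paper already warns (via the cinquefoil versus $9_{48}'$ example) that reversing all signs need not mirror the knot; so one cannot naively assert that a totally positive diagram yields a quasipositive knot in the textbook sense. The delicate point is therefore to verify that, after isotoping $F_D$ into a plumbing (or quasipositive) normal form using the fact that all clasps share the same sign, the off-diagonal linking numbers dictated by the chord crossings cannot destroy definiteness. Once definiteness, or more weakly a linear lower bound on $|\sigma|$, is established, the finiteness of totally positive (or totally negative) diagrams with no isolated chords representing $K$ is immediate.
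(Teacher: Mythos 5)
Your overall reduction---bound $\ord(D)$ by an invariant of $K$, then use the fact that there are finitely many diagrams of each order---is sound, and it is essentially how the paper's argument also concludes; but the step that does all the work in your proposal is exactly the step you do not prove. You need a knot invariant of $\Gamma(D)$ that is bounded \emph{below} by a linear function of $\ord(D)$, and you propose to get it from genus-minimality (equivalently, quasipositivity) of a surface $F_D$, or from definiteness of its Seifert form. Without that step, an embedded Seifert surface only gives the inequality in the useless direction, $g(\Gamma(D))\le g(F_D)$. Moreover, both mechanisms you suggest are false at the level of generality you invoke them: (i) positive braid closures need not have definite symmetrized Seifert forms---the torus knot $T(3,7)$ has $b_1=12$ but $\sigma=-8$---so definiteness cannot follow merely from ``all clasps are positive''; (ii) strong quasipositivity does not force linear signature growth: the positively-clasped untwisted Whitehead double of the right-handed trefoil is strongly quasipositive (by Rudolph's work, since the trefoil has nonnegative maximal Thurston--Bennequin invariant), has genus $1$ and $\sigma=0$, so connected sums of copies of it are strongly quasipositive with arbitrarily large genus and vanishing signature. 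Thus the signature route is dead, and only the genus route (quasipositive surfaces realize the slice genus, by Rudolph via Kronheimer--Mrowka) could work---but quasipositivity of $F_D$ is precisely what you flag as ``the main obstacle'' and leave unverified. There are also problems upstream of this: when two chords cross, the clasps pass through the spanning disk of the unknot, so ``disk plus one band per chord'' is not an embedded surface (one would have to run Seifert's algorithm on the drawn diagram instead), and ``$b_1(F_D)=\ord(D)$, one generator per chord'' is impossible on parity grounds---the cinquefoil, realized by three mutually crossing positive chords, has $b_1=4$, not $3$.

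The paper's own proof needs none of this geometry, and the contrast is instructive. By Theorem~\ref{gen}, the signed count of chord crossings (crossings of like-signed chords count $+1$, of oppositely-signed chords $-1$) is a finite type invariant of order $2$, hence a knot invariant. On a totally positive diagram it equals the number of edges of the intersection graph, so that number is determined by $K$ alone; since ``no isolated chords'' means the intersection graph has no isolated vertices, the number of chords is at most twice the number of edges, and finiteness follows because there are only finitely many graphs with a given number of edges and no isolated vertices, and only finitely many chord diagrams with a given intersection graph. If you want to rescue your approach, the single lemma you must supply is: for every totally positive diagram $D$ with no isolated chords, $\Gamma(D)$ bounds a quasipositive Seifert surface of genus at least $c\cdot\ord(D)$. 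That is a substantive assertion about the drawn diagrams---the crossings between distinct clasps are not obviously positive, as you yourself note---and nothing in your write-up addresses it.
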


\begin{proof}
By Theorem \ref{gen}, the signed count of crossings in a signed chord diagram, where crossings between chords of the same sign are positive and crossings between chords of opposite sign are negative, is a finite type invariant of order 2. This means that the number of edges in the intersection graph obtained from a totally positive (or negative) diagram $D$ is a knot invariant. Furthermore, there are only finitely many graphs with a fixed number of edges and no isolated points, and there are only finitely many chord diagrams with the same intersection graph. This means there are only finitely many totally positive knot diagrams which have the same rank 2 finite type invariants as $K$. Of course, this also means there are only finitely many diagrams which represent $K$. 
\end{proof}

 \newpage
 
\section{The Proof of Theorem \ref{rep}}\label{main}

\begin{proof}
We describe a procedure to produce a signed chord diagram from a banded knot. We start with an orientable banded knot $X = (\gamma, \{\beta_i\}_{i\in F})$. In each step of the procedure, we will modify our representation for $X$. \\

\begin{figure}[h]
\caption{A Banded Knot.}
\centering
\includegraphics[scale = 0.56]{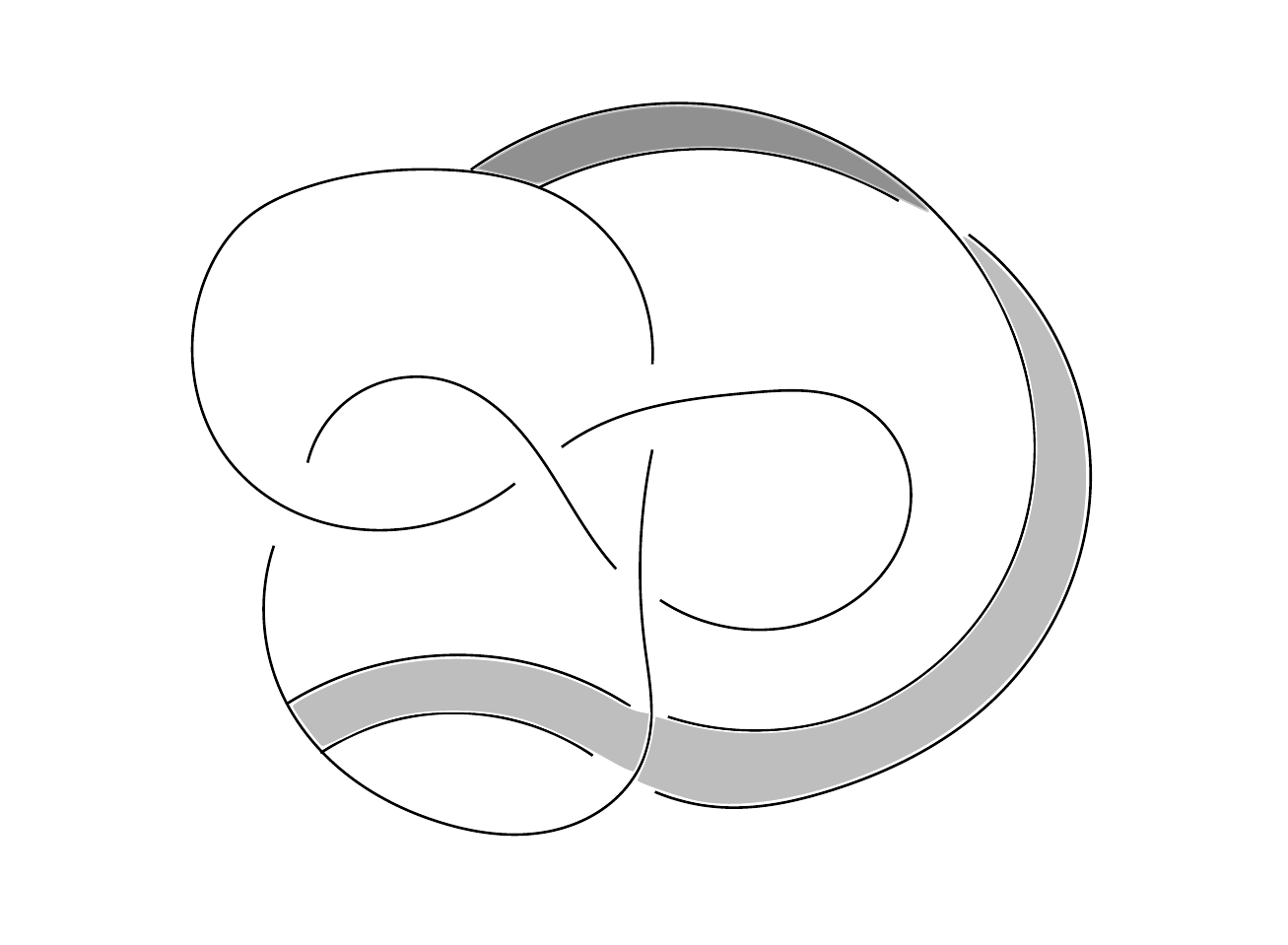}
\end{figure}

Step 1.

\item
 
Select a projection of $X$ onto $\R^2$, and isotope the bands so that they project into $\R^2$ in such a way that no edges cross over or under the bands, and the bands themselves are mapped via an orientation reversing embedding into $\R^2$.\\

\begin{figure}[h]
\caption{The above banded knot would look like this at this stage.}
\centering
\includegraphics[scale = 0.56]{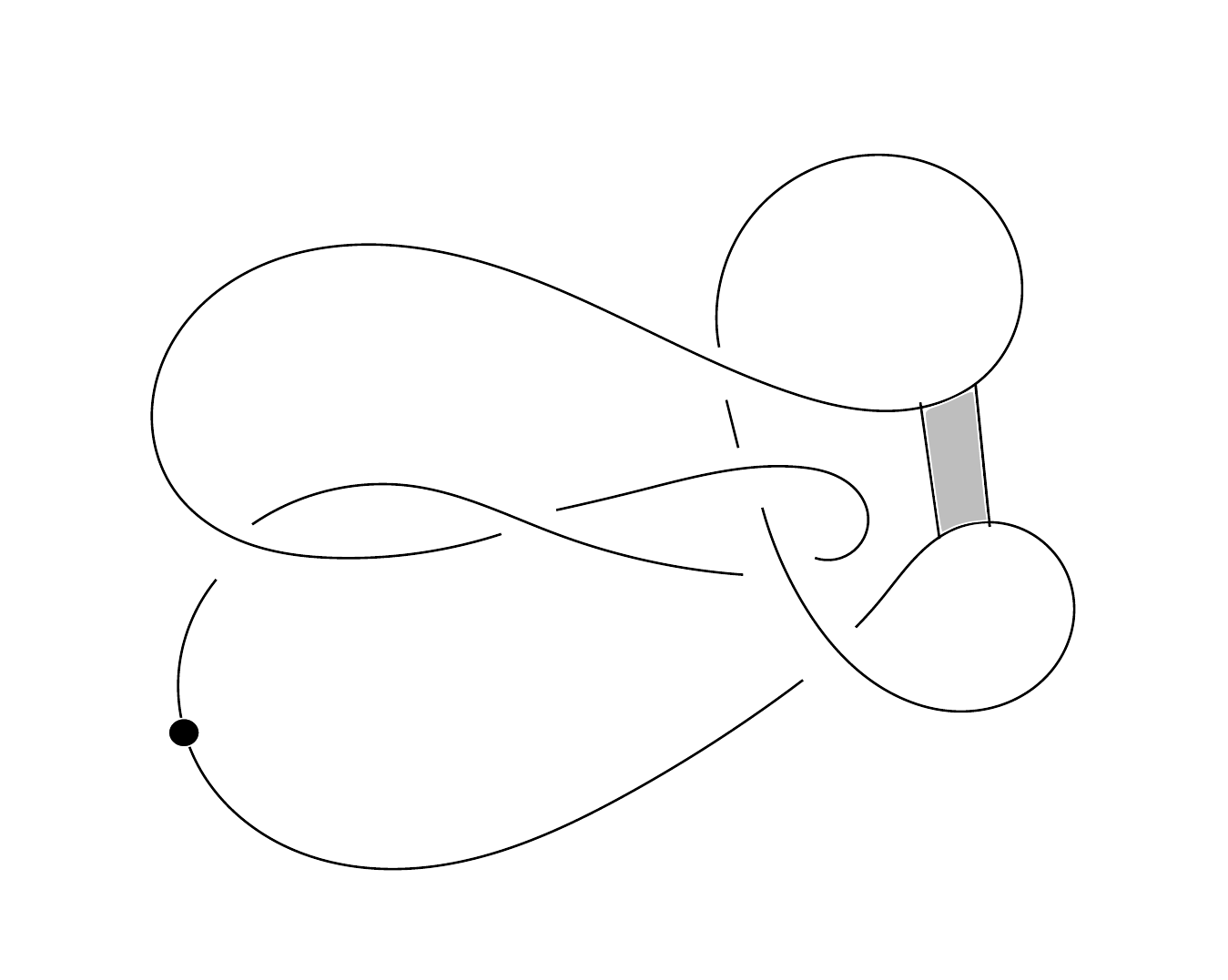}
\end{figure}

 We can think of the resulting projection as a knot diagram with some small squares connecting nearby edges of the diagram, representing bands.  \\

Step 2.

\item 

Select a base point for $X$ which does not lie on any crossing or band in the projected picture. Then, starting at the base point and moving in the positive direction along the knot, mark $S^1$ every time we reach an under-crossing for a crossing we have not yet reached, and every time we reach a given band for the first time. Then, isotope the knot so that an interval around every marked point and the base point projects onto the x-axis in $\R^2$ via an orientation preserving map, and the order in which these points appear along the x-axis is the same order in which the points appeared as we marked them, with the base point first. \\

\begin{figure}[h]
\caption{Our knot now looks like this.}
\centering
\includegraphics[scale = 0.45]{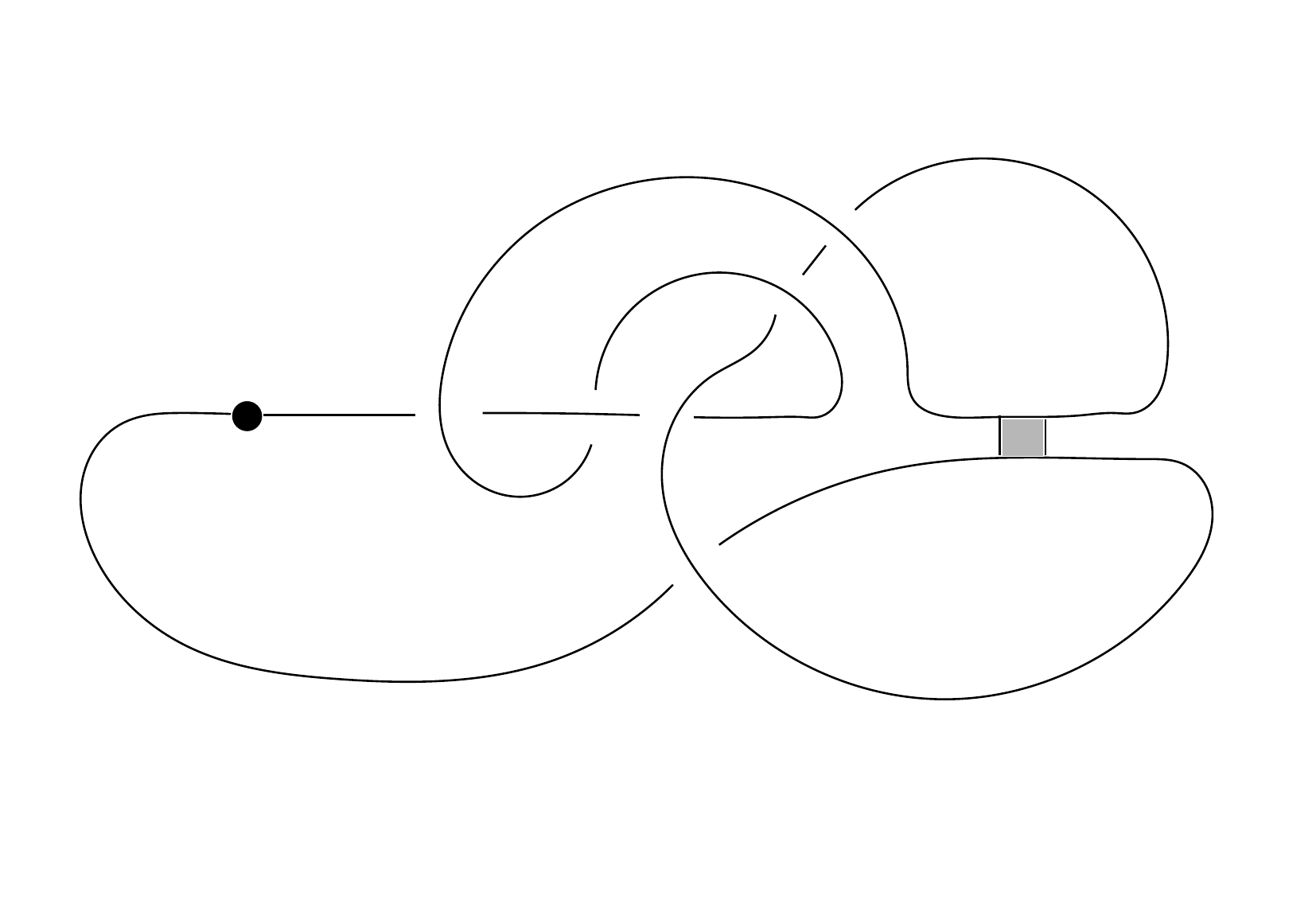}
\end{figure}

Step 3.

\item

We now view the knot from the side with an appropriate choice of height function. Each marked point now corresponds to a downward pointing finger, with either a clasp or a band at the end. Away from the fingers, moving in the positive direction along the knot corresponds to a decreasing y-coordinate. 

\begin{figure}[h]
\caption{What our knot looks like from the side.}
\centering
\includegraphics[scale = 0.5]{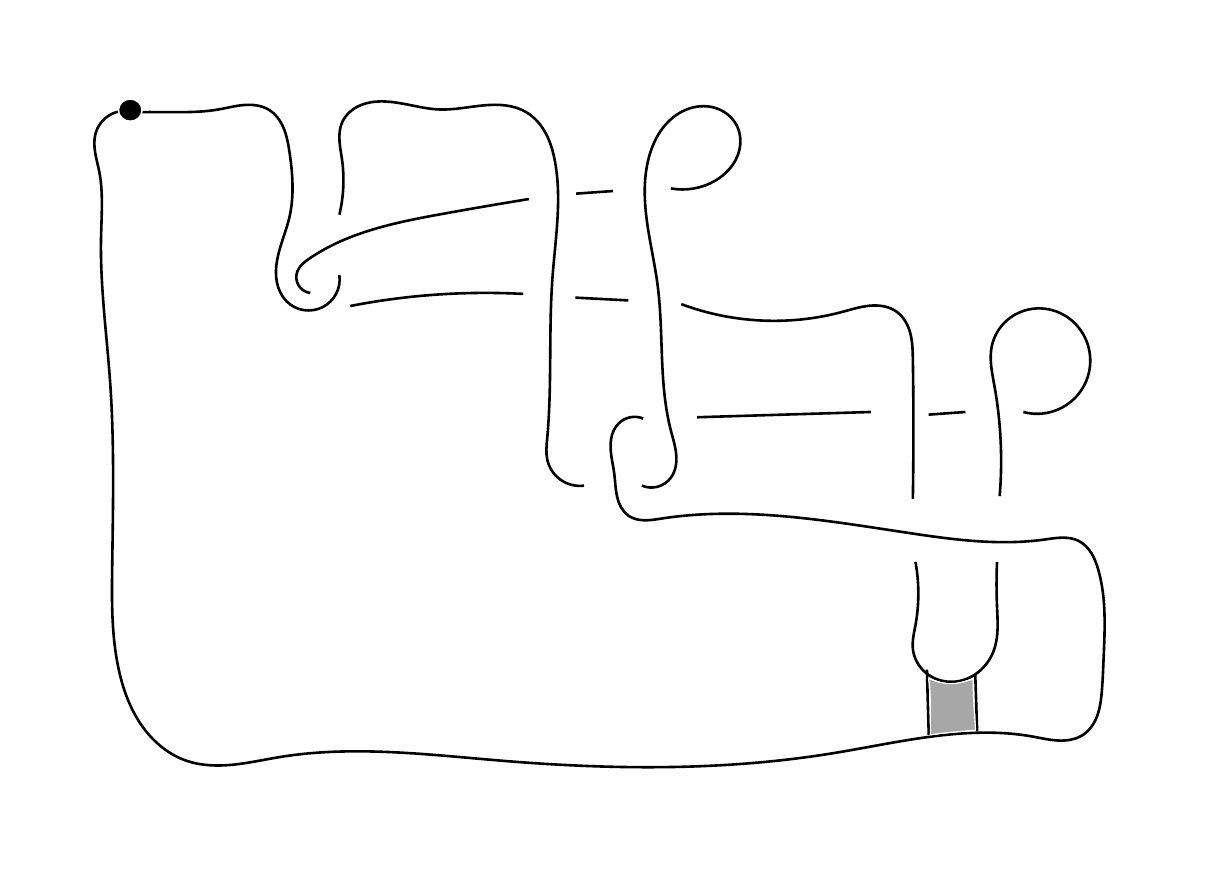}
\end{figure}

It is important to note that given any point $p$ on one of the fingers, if one draws a line down from that point, that line will only intersect the knot diagram at points that occur after $p$ relative to our base point. \\

Step 4. 

\item

Repeat steps 2 and 3, but this time we can ensure the marked points retain the order of their x-coordinates as we isotope them onto the x-axis. Due to the fact that after step 3, a downwards line from a finger could only intersect points that appeared later on the knot, we now have no lines which pass over any of the fingers, except lines which are involved in a clasp. 

\begin{figure}[h]
\caption{We repeat Step 2, lining up the marked points.}
\centering
\includegraphics[scale = 0.5]{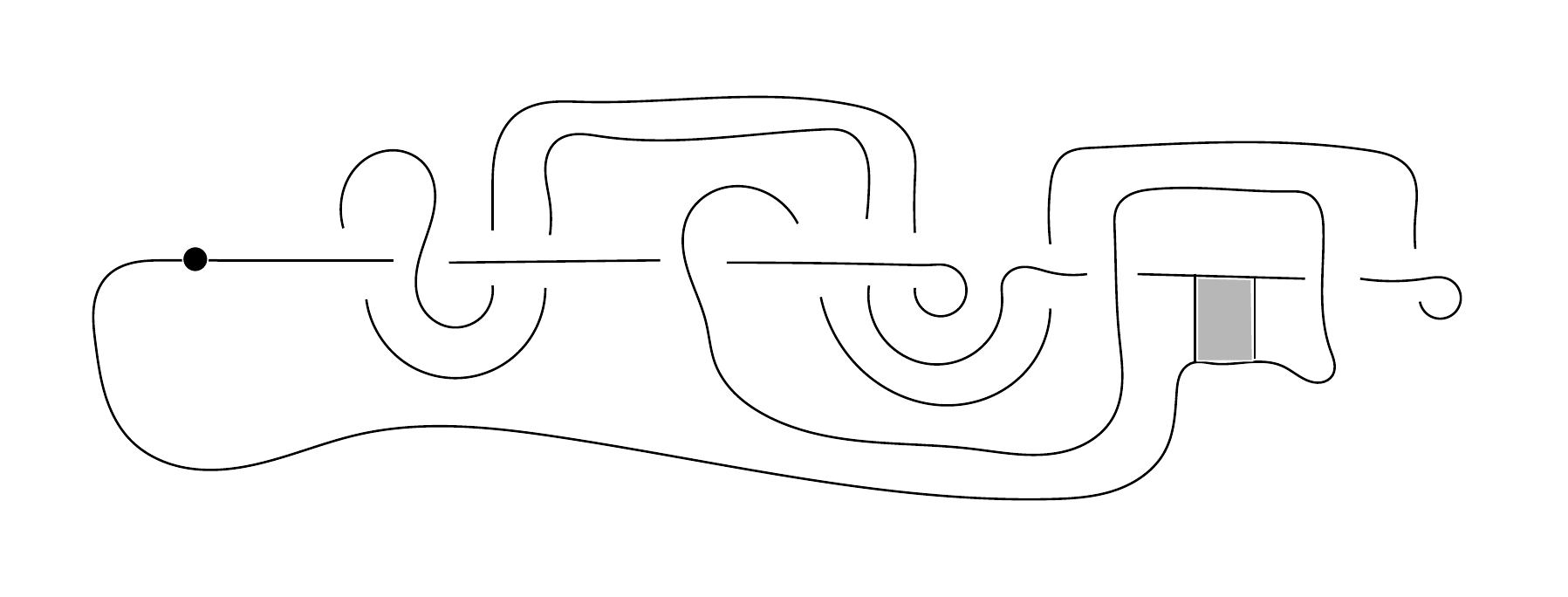}
\end{figure}

\begin{figure}[h]
\caption{We repeat Step 3. The downward fingers now lie on top of everything.}
\centering
\includegraphics[scale = 0.5]{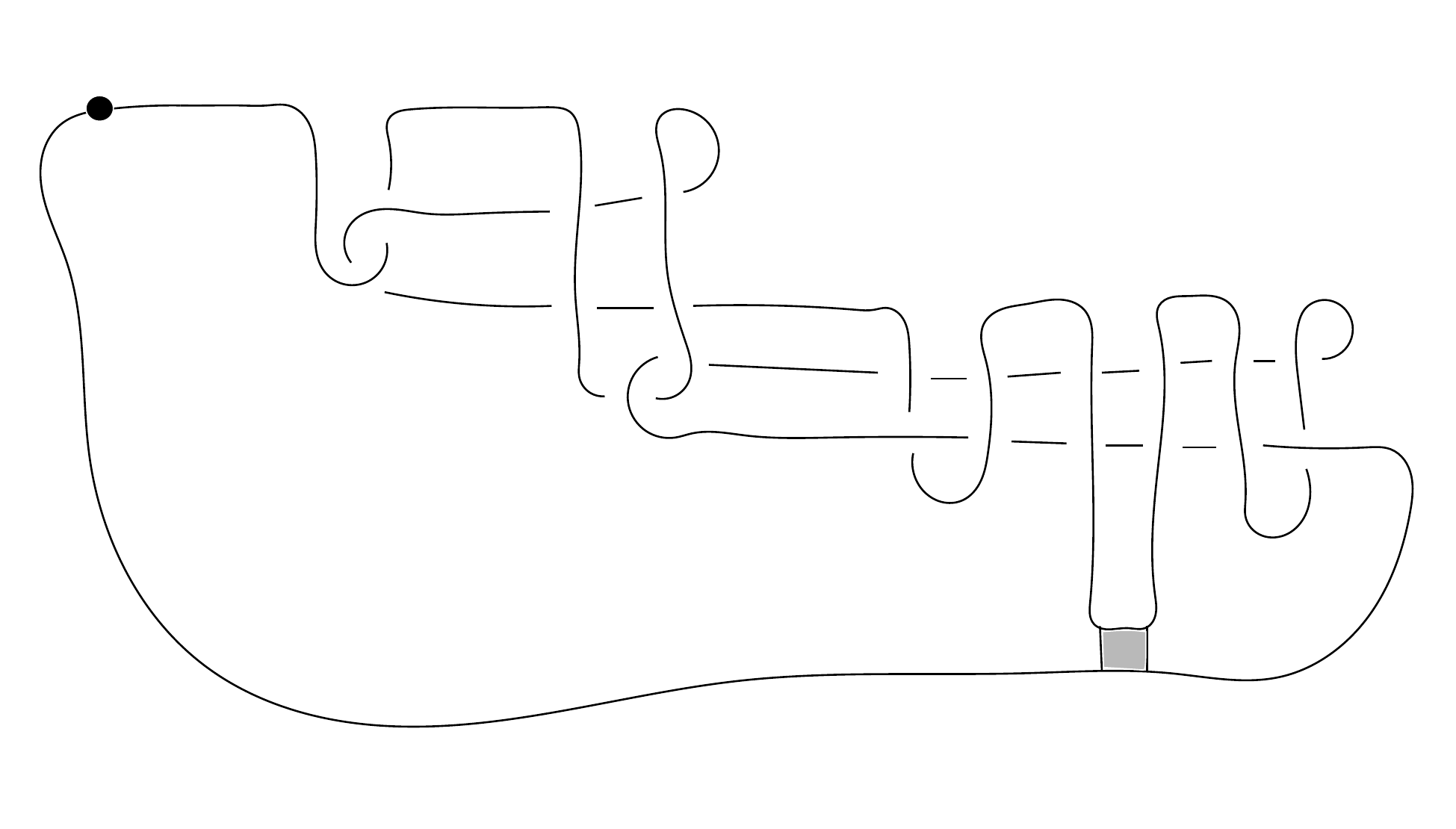}
\end{figure}

Now, any horizontal fingers which have many clasps should be separated into individual clasps. 

\begin{figure}[h]
\centering
\includegraphics[scale = 0.5]{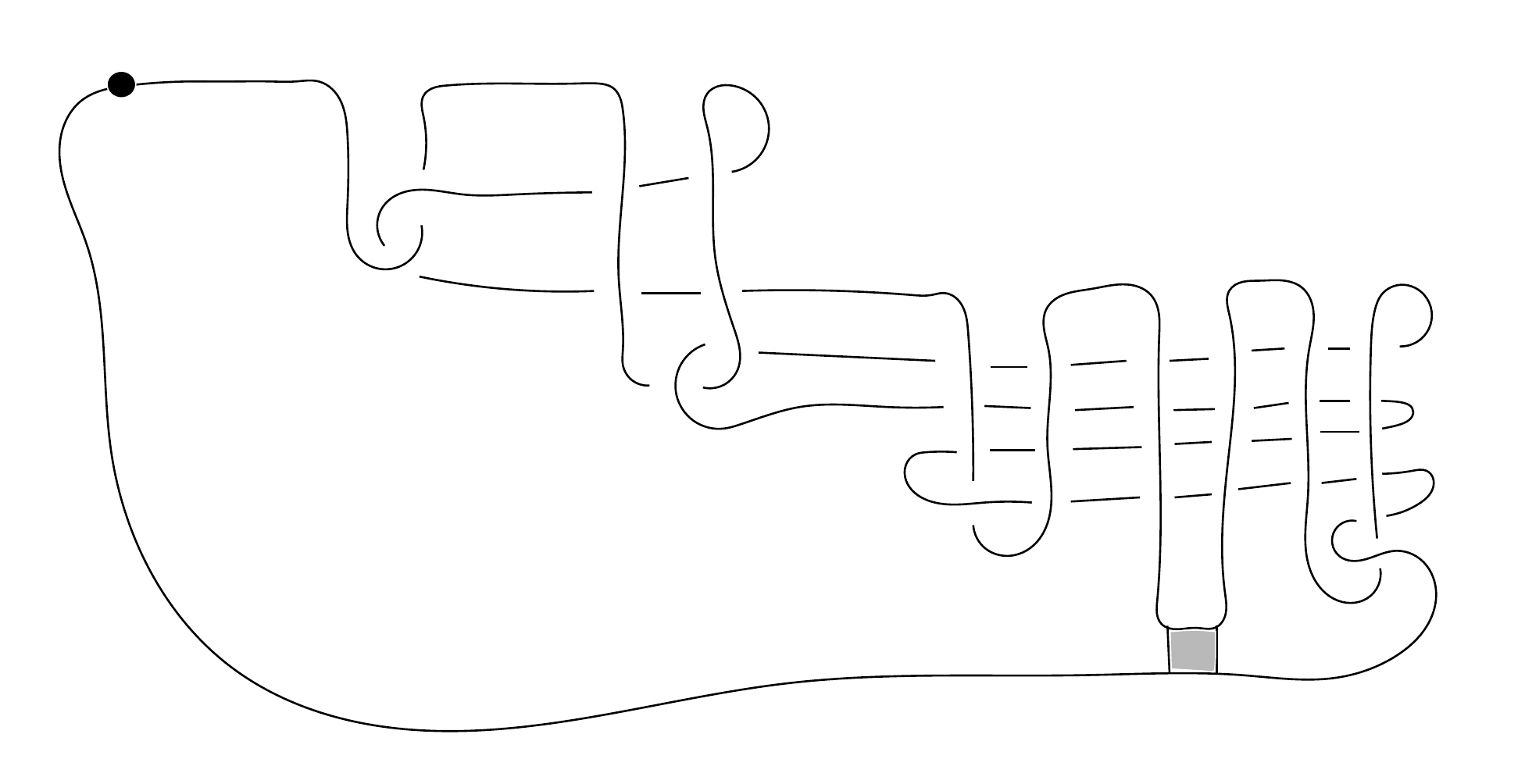}
\end{figure}

At this point, the clasps can be classified into four types: Bands, +1 clasps, -1 clasps, and -1 clasps with one full twist in the negative direction. This last kind of clasp can be transformed into a negative clasp with another small negative clasp at its base. 

\begin{figure}[h]
\caption{Our knot after removing the twisted negative clasp.}
\centering
\includegraphics[scale = 0.5]{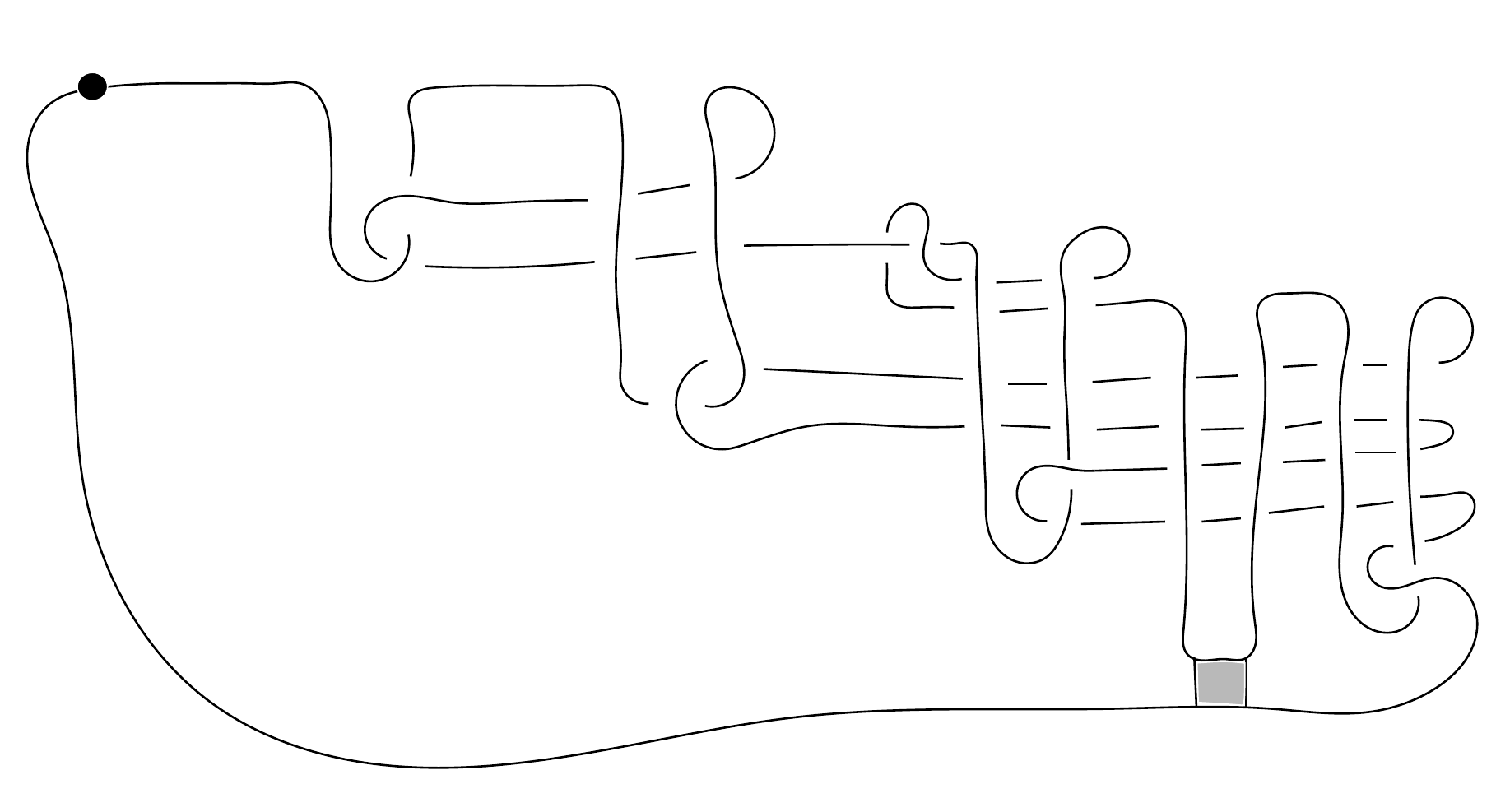}
\end{figure}

\begin{figure}[h]
\caption{Flip the knot from the previous figure upside down to get the final knot diagram.}
\centering
\includegraphics[scale = 0.4]{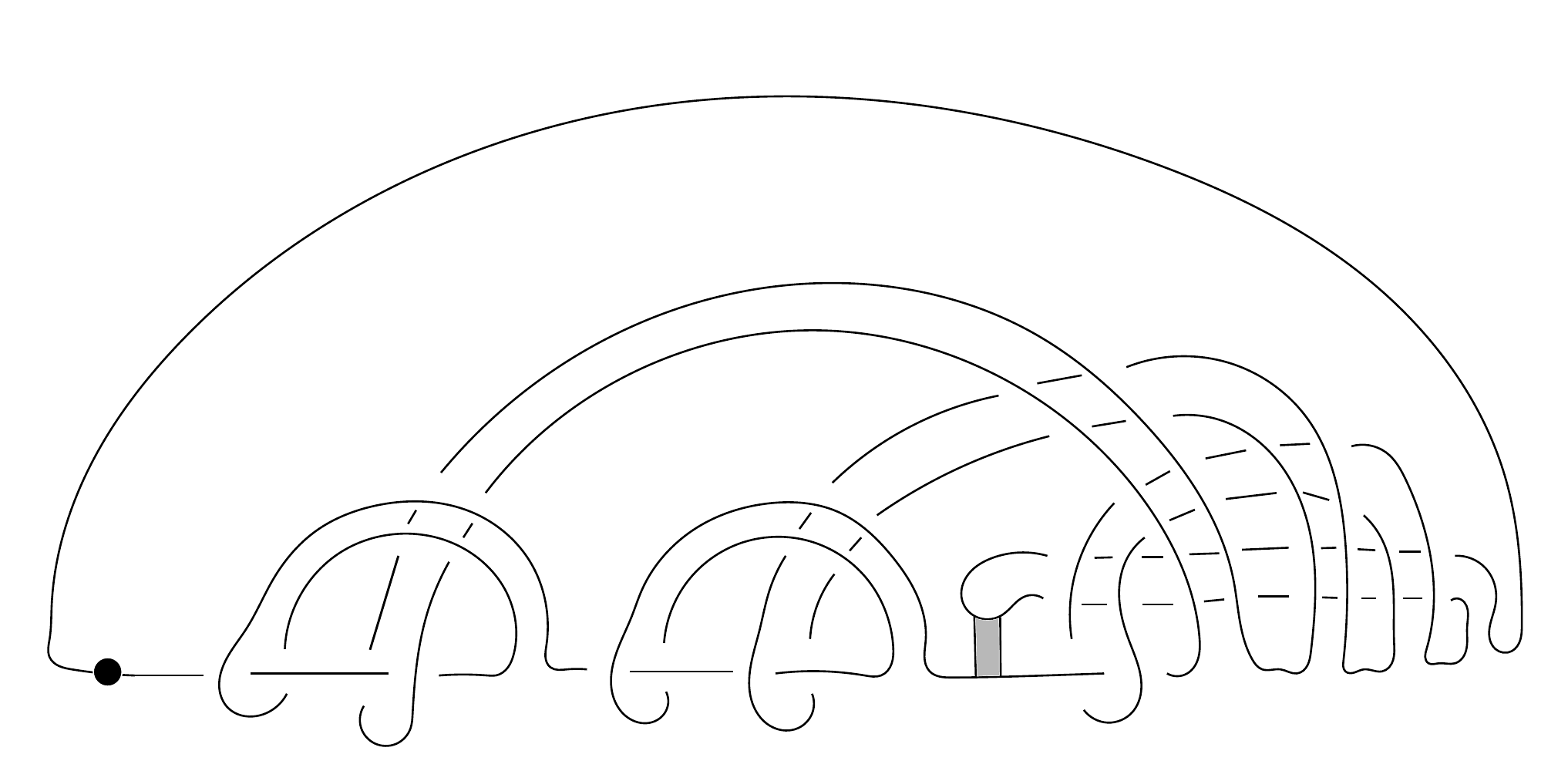}
\end{figure}

Now, we can produce a partially signed chord diagram by making a sign zero chord for each band, a sign +1 chord for each +1 clasp, and a sign -1 chord for each -1 clasp. 
\begin{figure}[h]
\caption{The final partially signed chord diagram for our example banded knot.}
\centering
\includegraphics[scale = 0.4]{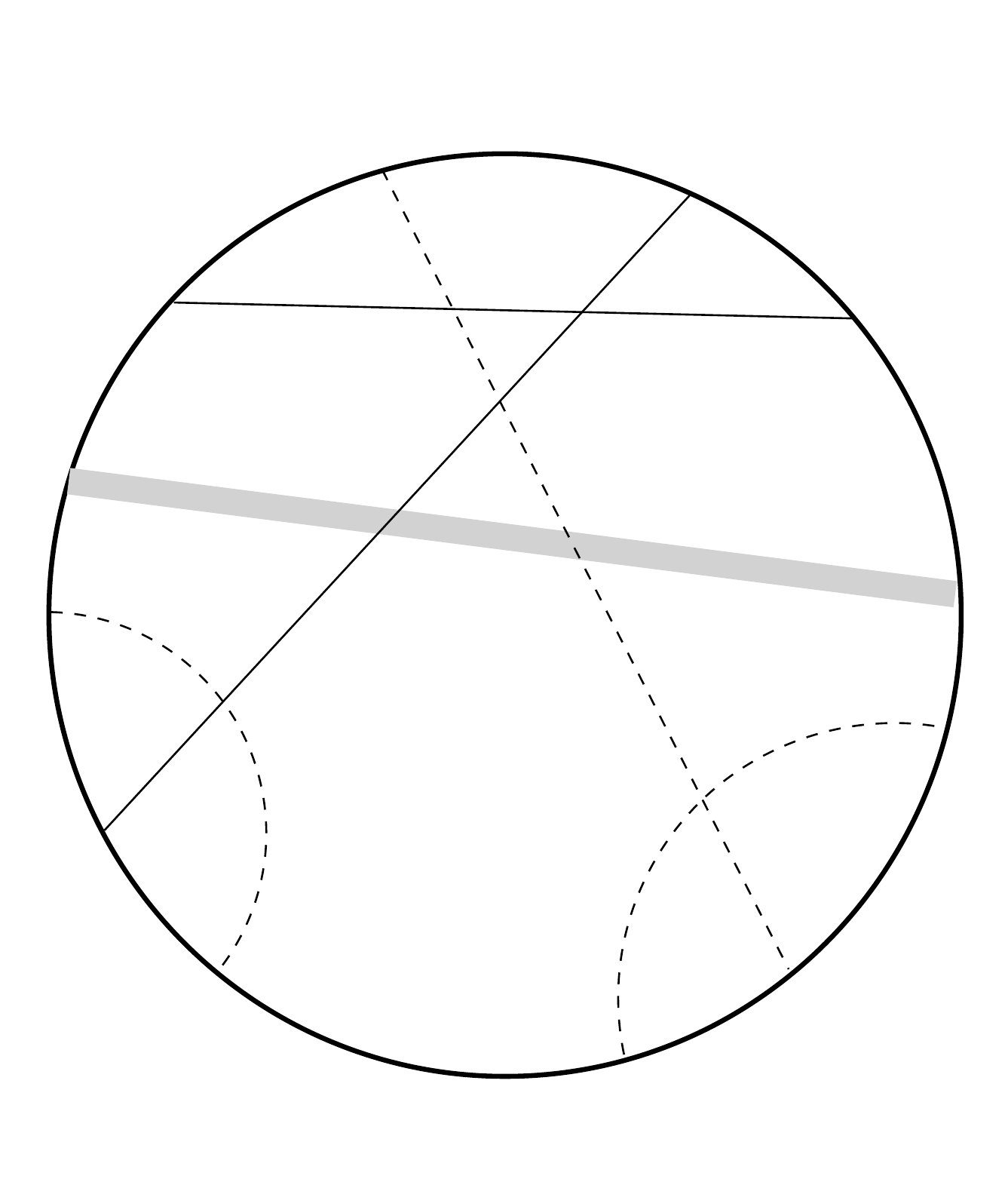}
\end{figure}

We have now produced a partially signed chord diagram $D$, and produced an isotopy from $K$ to $\Gamma(D)$.  (See figures below.)

\end{proof}

\newpage

\section{Equivalence Moves on Signed Chord Diagrams}

In this section, we will give a precise combinatorial description of the procedure described in the previous section, and use it to create a complete set of moves for signed chord diagrams. 

\begin{dfn}
A knot diagram consists of the following data. 
\begin{itemize}
\item[1)] A smooth immersion $p:S^1\to \R^2$ such that all self intersections are transverse double-points.
\item[2)] For each double point $x \in \R^2$, a bijection $h_x: p^{-1}(x) \to \{1,-1\}$ which we call the height function.
\item[3)] A choice of base-point $*\in S^1$ which does not map to a double-point.
\item[4)] A choice of orientation for $S^1$.
\end{itemize} 
\end{dfn}

\begin{dfn}
Suppose we have a knot diagram as above. Then let $X$ be the subset of $S^1$ consisting of points that map to double-points in $\R^2$. We have a function $h : X\to \{-1,1\}$ by combining the functions $h_x$ for each double point $x$. Furthermore, we can linearly order $X$ by starting at the base-point and moving in the positive direction in $S^1$. For $t\in X$, we call $t$ a gap of the diagram if it has $h(t) = -1$ and is ordered before the other point in $X$ which maps to $p(t)$ in $S_1$. Informally, if one draws a knot diagram starting at the base point, the gaps are the times when one needs to anticipate that a strand will later go over the line, so one must draw a gap in the line. 
\end{dfn}

\begin{dfn}
Given a knot diagram, let $t_1,t_2,...,t_n\in S^1$ be the gaps of the diagram in order starting from the base point. Let $t_0 = *$ be the base point. Then the knot diagram is said to be in gap linear position if $p(t_k)$ is on the x-axis for all $k$, with x-coordinate increasing as $k$ increases, and furthermore, there exist disjoint open intervals $I_0, I_1,...,I_n\subseteq S^1$ with $t_k\in I_k$ so that so that the function $p|_{I_k}: I_k\to \R^2$ has its image in the x-axis, and is increasing with respect to the x-coordinate as one moves in the positive direction in $S^1$. A gap linear knot diagram is a knot diagram in gap linear position. Two gap linear knot diagrams are considered equivalent if they are connected by a planar isotopy that preserves gap linearity throughout the isotopy.  
\end{dfn}

\begin{lem}\label{gaphomotopy}
Let $L$ be a gap linear knot diagram, with base point $t_0$ and gaps $t_1,...,t_n$. Then let $B_0,B_1,...,B_n$ be small disjoint open disks centered around $t_0,t_1,...,t_n$ respectively. These disks should be small enough that they each only contain one crossing and two arcs of the diagram so that the diagram crosses $B_i$ at exactly four points, except for $B_0$ which contains only one arc and has two intersections at its boundary. Let $S$ denote the set of all points at which the diagram intersects one of the disks. Let $I_1,I_2,...,I_m$ denote all of the closed intervals that comprise the complement of $p^{-1}\left(\bigcup_{k = 1}^n B_k \right)$ in $S^1$. For $k\in \{0,...,m\}$, let $A_k$ denote the set of all numbers in $\{0,...,n\}$ which are either $0$, or are the index of a disk for which some but not all of the points in $S$ on the boundary of that disk lie between the base point and a point in the interior of $I_k$.  Then the knot type represented by the diagram $L$ depends only on the homotopy type, relative boundary, of the path $p|_{I_k}: I_k\to \R^2\setminus\bigcup_{i\in A_k}B_i$ for all $k$.
\end{lem}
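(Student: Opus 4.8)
The plan is to show that the knot type is unchanged when a single arc $p|_{I_k}$ is replaced by a homotopic arc (rel boundary) inside $\R^2\setminus\bigcup_{i\in A_k}B_i$, with all other arcs and the contents of every disk held fixed; homotoping the arcs one at a time then gives the full statement. So I would fix $k$ and a homotopy $\gamma_s$ of $p|_{I_k}$ rel endpoints in the punctured plane, and aim to realize it by an ambient isotopy of the lifted knot. The first step is to lift the whole diagram to $\R^3$ using the height structure extracted in the proof of Theorem \ref{rep}: there is a height function for which, away from the fingers, the height decreases monotonically as one moves along the knot in the positive direction from the base point, the fingers lie on top of everything, and a vertical line dropped from any point meets only strands that are visited later. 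The consequence I want to record is a \emph{height--chronology correspondence}: the two strands through any projected crossing are separated in height according to their chronological order from the base point.

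The crux is to translate the combinatorial definition of $A_k$ into a statement about this height function. For $j\ge 1$, the disk $B_j$ contains exactly one crossing, so the knot visits it at exactly two times, contributing two of the four boundary points of $B_j$ in $S$ at each visit. The condition that \emph{some but not all} of these four points chronologically precede the interior of $I_k$ is then equivalent to $I_k$ lying chronologically between the two visits to that crossing. By the height--chronology correspondence this says exactly that the lifted arc $\tilde I_k$ sits at a height strictly between the two strands of the crossing of $B_j$; whereas for $j\notin A_k$ all four points of $B_j$ are on the same side of $I_k$ chronologically, so both strands of $B_j$ lie on one side of $\tilde I_k$ (both above, or both below).

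With this dictionary in hand I would decompose the homotopy $\gamma_s$ generically into embedded (planar isotopy) moves together with transverse passages of $\gamma_s$ through (a) another arc of the diagram, or (b) a disk $B_j$ with $j\notin A_k$; a passage through a forbidden disk $B_j$ with $j\in A_k$ is the only move the homotopy is not allowed to perform. Embedded moves lift to planar isotopies of the knot. For a passage of type (a) or (b), the arc or crossing being crossed lies entirely above or entirely below $\tilde I_k$ by the previous paragraph, so there is free room on the appropriate side to slide $\tilde I_k$ over (or under) it; the passage is then realized by a Reidemeister II or III move, or a sliding isotopy, and does not change the knot type. A passage through a forbidden disk is excluded for exactly the right reason: there $\tilde I_k$ is pinned between the two strands of the crossing, and pushing its projection across $B_j$ would force a crossing change with one of those strands. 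Concatenating these elementary moves gives an ambient isotopy, so the knot type depends only on the homotopy classes of the arcs.

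The main obstacle is the height--chronology step together with the continuity of the lift: I must check that the height of $\tilde I_k$ can be chosen continuously throughout the homotopy without being trapped once the disks in $A_k$ are deleted, i.e. that at each instant the finitely many strands crossed by $\gamma_s$ can be simultaneously avoided in the height direction subject to the fixed heights at the endpoints of $I_k$. The base-point disk $B_0$, which lies in every $A_k$ even though all of its boundary points precede each $I_k$, must be handled separately: it anchors the global maximum of the height function (the start of the traversal), there is no strand ``above'' it for an arc to slide over, and forbidding arcs to cross $B_0$ is precisely what keeps the linear order from the base point, and hence the entire height structure, well defined.
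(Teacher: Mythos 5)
Your proposal is correct and follows essentially the same route as the paper's proof: both arguments rest on the observation that away from the disks the knot can be lifted with height decreasing monotonically in the chronological order from the base point, so that everything outside the disks $B_i$, $i\in A_k$, lies entirely above or entirely below the strand $p|_{I_k}$, giving a canonical over/under resolution and hence knot-type invariance under homotopies avoiding those disks. Your explicit dictionary for $A_k$ (disk strands straddling versus flanking $\tilde I_k$ in height), the generic decomposition of the homotopy into Reidemeister-type moves, and the separate treatment of $B_0$ are elaborations of steps the paper's terse proof leaves implicit, not a different method.
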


\begin{proof}
Outside of the disks there are no gaps, so when we are not in a disk we can think of the knot as having a decreasing height with respect to the linear order induced by starting at the base point and moving in the positive direction. From this, we see that outside the disks $B_i$ with $ i\in A_k$, there is a clear choice of whether a given point on the knot diagram should go under or over the strand defined by $p|_{I_k}$. Therefore, we see that if we apply a homotopy to $p|_{I_k}$, which doesn't cross any of the disks $B_i$ with $ i\in A_k$, there is a canonical choice for the height function at the crossings so that we get a gap linear knot diagram which represents the same knot as the original diagram. 
\end{proof}

\begin{dfn}
Fix $0 < \varepsilon < 1/2$. Let $n$ be a nonnegative integer. Let $$S=\{(-\varepsilon,0),(\varepsilon,0)\}\bigcup_{k = 1}^n \{(n-\varepsilon,0),(n+\varepsilon,0),(n,-\varepsilon),(n,\varepsilon)\}$$ For any point $a\in S$, let $a'\in S$ be the reflection of $a$ across the nearest integral lattice point. Let $B_k$ be the open ball of radius $\varepsilon$ around $(k,0)$.  A gap homotopy representation of order $n$ for a knot consists of the following data.
\begin{itemize}
\item[1)] A set pairs $p_k:\{0,1\}\to S $ defined for $k\in \{0,1,...,2n\}$ which form a partition for $S$, such that for all $k\in \{1,...,2n\}$, we have that $p_k(0)' = p_{k-1}(1)$ and the x-coordinate of $p_k(0)$ is greater than or equal to the x-coordinate of $p_{k-1}(1)$. Furthermore, we require $p_0(0) = (\varepsilon,0)$ and $p_{2n}(1) = (-\varepsilon,0)$. Finally, we require that the simple graph on $S$ with edges between all pairs of the form $(a,a')$ and $(p_k(0),p_k(1))$ is cyclic and connected, and the points in $S$ on the x-axis appear in order in this graph. We denote this graph by $G$.
\item[2)] For each $k\in \{0,1,...,2n\}$, a homotopy type of path $\overline{p}_k: [0,1]\to \R^2\setminus \bigcup_{i \in A_k}B_i$ that extends $p_k$, where $A_k$ denotes the set of all $k\in \{0,...,n\}$ for which $S\cap B_k$ fails to lie entirely in one connected component of $G$ after we remove the edges $((\varepsilon,0),(-\varepsilon,0))$ and $(p_k(0),p_k(1))$. 
\end{itemize}
By Lemma \ref{gaphomotopy}, a gap homotopy representation for a knot can be converted into a gap linear knot diagram by drawing crossings inside the disks $B_k$, (with the strands on the x axis going underneath) selecting generic representatives for $\overline{p_k}$, and then giving the crossings of the resulting knot diagram the unique height function for which the only gaps of the diagram are inside of the disks. Any gap linear knot diagram has a unique gap homotopy representation which it corresponds to in this way.
\end{dfn}

Fix the base point $* = (n+1,0)\in \R^2\setminus \bigcup_{k = 0}^n B_k$. Then if we fix a path in $\R^2\setminus \bigcup_{k = 0}^n B_k$ from each element of $S$ to $*$, we can represent homotopy types of paths by elements of a free group on $|A_k|$ generators. We select paths and generators as depicted in Figure \ref{paths}. The generator of the fundamental group which loops around $B_k$ will be denoted $x_k$. Now, $\overline{p}_k$ can be uniquely represented as a reduced word in the alphabet $\{x_i\}_{i\in A_k}$.

\begin{figure}[h]
\caption{\label{paths} Our generators for the fundamental group and our paths to the base point.}
\centering
\includegraphics[scale = 0.4]{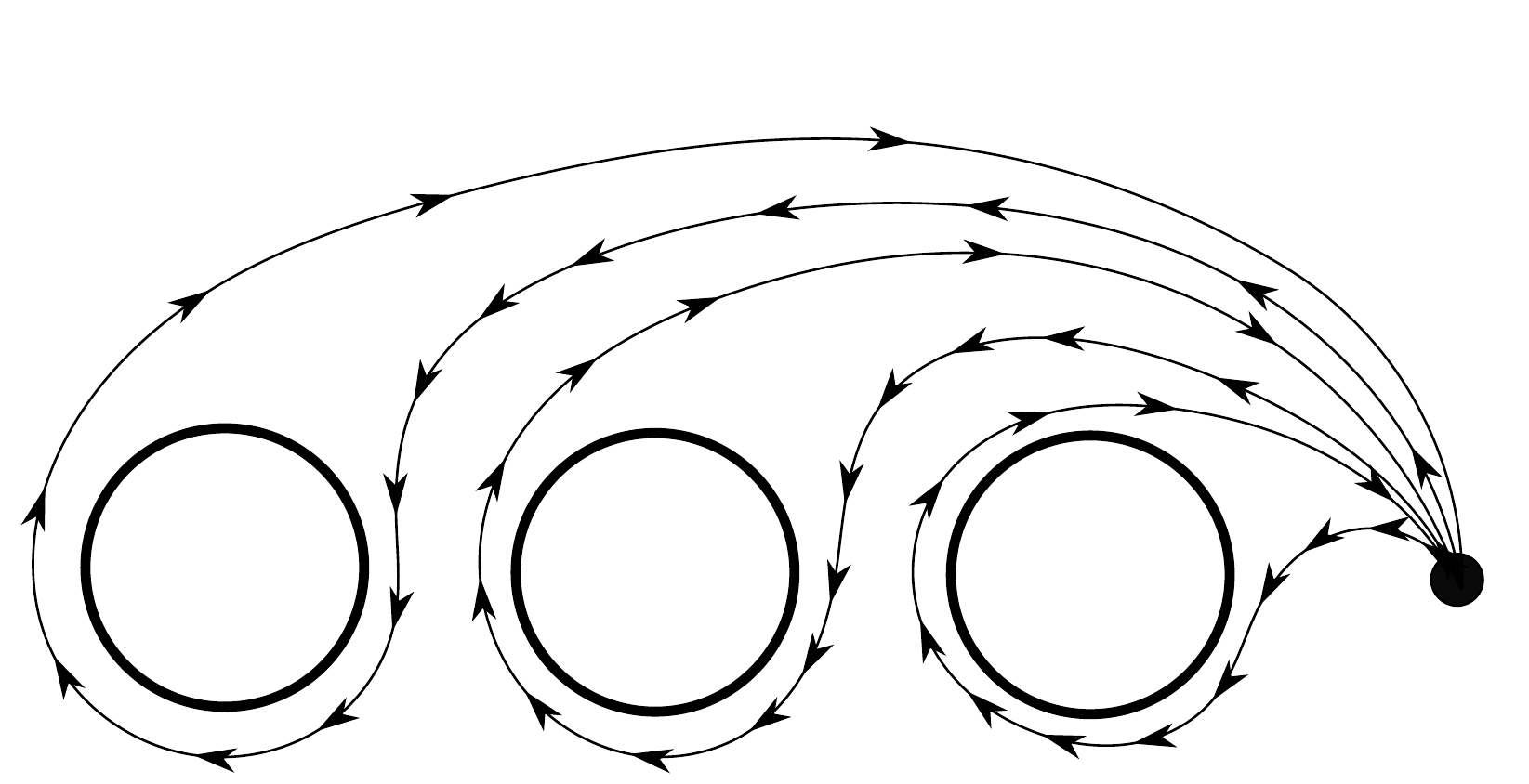}
\includegraphics[scale = 0.4]{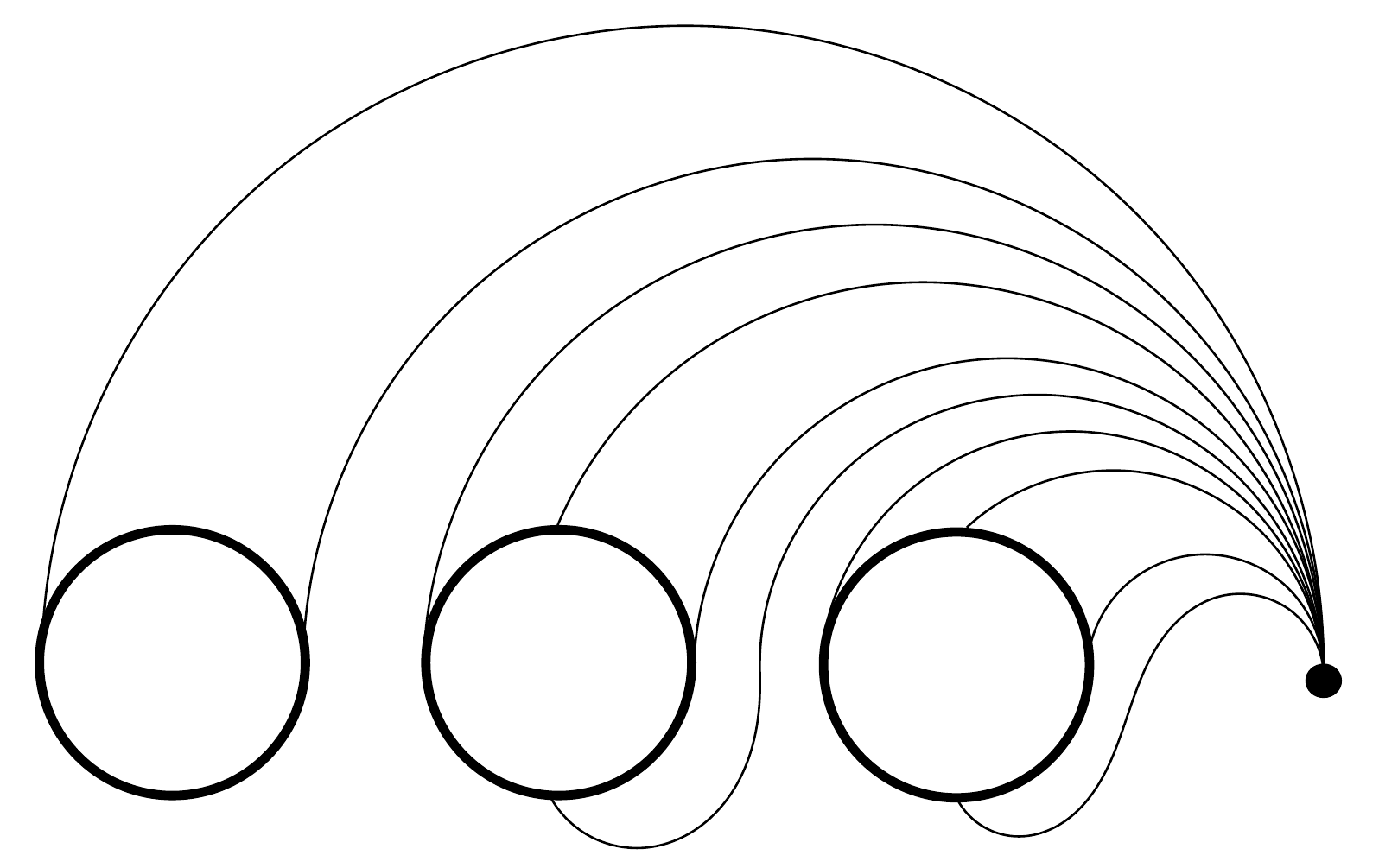}
\end{figure}

\begin{dfn}
A word sequence representation for a knot is a sequence of symbols in the set $\{x_n, x_n^{-1}, [_m, ]_m^{\pm}\}_{n\in \N, m\in \N\setminus\{0\}, \pm \in \{+,-\}}$. We require the following.
\begin{itemize}
\item[1)] The symbols $[_n$ and $]_n^{\pm}$ come in pairs, with $[_n$ appearing first. 
\item[2)] For no value of $n$ does $[_n$ appear more than once.
\item[3)] If $1 \leq m \leq n$ and $[_n$, $x_n$ or $x_n^{-1}$ appears, then so does $[_m$.
\item[4)] The symbols $[_1,[_2,...,[_n$ appear in order.
\end{itemize} 
\end{dfn}

A word sequence representation defines a knot because it gives us the necessary data to define a gap homotopy representation. The value of the largest $n$ for which $[_n$ appears is the order of the representation. At the beginning of the sequence, we start at $(\varepsilon,0)$. The words in $\{x_n\}_{n\in \N}$ describe a path in $\R^2\setminus \bigcup_{k = 0}^n B_k$. The symbol $[_n$ means passing from $(n-\varepsilon,0)$ to $(n+\varepsilon,0)$, the symbol $]_n^+$ means passing from $(n,\varepsilon)$ to $(n,-\varepsilon)$, and the symbol $]_n^-$ means passing from $(n,-\varepsilon)$ to $(n,\varepsilon)$. At the end of the sequence we finish at $(-\varepsilon,0)$. 

For example, $[_1x_1]_1^+$ defines a right handed trefoil, $[_1x_1^{-1}]_1^-$ defines a left handed trefoil, and $[_1x_1]_1^-$ defines a figure eight knot. It should be noted that many different word sequence representations may correspond to the same gap homotopy representation, since we do not require words to be reduced, nor that they be composed exclusively of generators with indices in $A_k$. 

At this point it should be noted that word sequence representations with no $x_i$ or $x_i^{-1}$ symbols correspond to signed chord diagrams. For instance, the signed chord diagram for the figure eight knot with a + and - chord that cross each other can be represented by $[_1[_2]_1^+]_2^-$. It is easy to see that the geometric realization of a signed chord diagram yields the same knot as the knot that is represented by the corresponding $x$-symbol free word sequence.

\begin{dfn}
Let $L_1$ and $L_2$ be two gap linear knot diagrams. Suppose there exists a knot diagram with the following properties. 
\begin{itemize}
\item[1)] We have two specified base points $*_1$ and $*_2$ so that as one moves in $S_1$ from $*_1$ to $*_2$ in the positive direction, only one crossing of the knot diagram is encountered.
\item[2)] Our diagram is gap linear with respect to both $*_1$ and $*_2$. With respect to $*_1$, our diagram is equivalent to $L_1$, and with respect to $*_2$, our diagram is equivalent to $L_2$. 
\item[3)] As one moves in $S^1$ from $*_1$ to $*_2$ in the positive direction, at no point in this path is the y-coordinate negative while the x-coordinate is the same as that of a base point or one of the gaps with respect to $*_1$ or $*_2$. That is to say, the path never crosses a line drawn straight down from a gap or base point. This path may, however, cross over a gap itself if the situation demands it. 
\end{itemize}
Then $L_1$ and $L_2$ are said to be connected by an elementary base point move.
\end{dfn}

\begin{dfn}
The pure framed braid group of $n+1$ strands acts on gap linear knot diagrams with $n$ gaps, where the action is the application of a planar isotopy that permutes the gaps and base point along the strands of a pure framed braid. If two gap linear knot diagrams are related by the action of an element of the pure framed braid group, then they are said to be related by a braid move.
\end{dfn}

\begin{lem}\label{lindiagmoves}
If $L_1$ and $L_2$ are two gap linear knot diagrams which represent the same knot, then they are related by a sequence of moves of the following types.
\begin{itemize}
\item[1)] Reidemeister moves that don't change the gap homotopy representation.
\item[2)] Elementary base point moves.
\item[3)] Braid moves.
\end{itemize}
\end{lem}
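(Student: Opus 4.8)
The plan is to lift a classical Reidemeister sequence into the world of gap linear diagrams by a movie argument. Since $L_1$ and $L_2$ represent the same knot, the underlying immersed curves are connected by an ambient isotopy of $\R^3$; projecting to $\R^2$ and perturbing to general position yields a one-parameter family $L_t$ of knot diagrams, carrying a moving base point and orientation, such that for all but finitely many $t$ the diagram is gap linear after applying the canonical straightening procedure from the proof of Theorem \ref{rep} (which drags each gap onto the $x$-axis). It then suffices to identify, at each of the finitely many exceptional times, the elementary change in the gap linear data and to match it to one of the three permitted moves.

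The generic exceptional events come in three kinds. First, an honest Reidemeister move ($R1$, $R2$, or $R3$) can occur among the crossings. Whenever such a move takes place in the interior, away from the $x$-axis, and is performed so that no undercrossing becomes the first of its two passages, it creates or destroys only non-gap crossings; by definition the gap homotopy representation — which records only the gaps and the homotopy classes of the connecting arcs relative to the disks $B_i$ — is unchanged, so this is a move of type 1. More generally, any deformation of the connecting arcs that does not alter their homotopy class relative to the relevant disks is realized by type 1 moves: this is exactly the content of Lemma \ref{gaphomotopy}, which promotes such a homotopy to an isotopy of gap linear diagrams carrying the canonical height function. In particular, any two gap linear diagrams sharing the same gap homotopy representation are connected by type 1 moves, so the problem reduces to connecting the gap homotopy representations of $L_1$ and $L_2$. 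Second, the moving base point may pass through a crossing; on the two sides of this event the same crossing is counted as a gap or not, and the diagrams before and after differ precisely by an elementary base point move (type 2). Third, as the gaps and the base point travel through the plane they sweep out a framed braid on $n+1$ strands, and the elementary moments of this sweep — a strand winding around a neighbor, or a local full twist recording framing — are realized by braid moves (type 3).

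The main obstacle is completeness: showing that these three families genuinely suffice, so that no exceptional event forces a change of the gap homotopy representation outside their span, and in particular that changes in the \emph{order} $n$ are accounted for. The key point is that the order can change only when a gap is created or destroyed, which in gap linear position can happen only on the $x$-axis; a new would-be gap must first be transported onto the axis, and this transport is a framed braid motion (type 3), while its conversion between gap and non-gap status is governed by whether it lies before or after the base point, hence by a base point move (type 2). Thus every order-changing event should factor as a braid move carrying the relevant crossing to the axis, followed by a base point move converting it, modulo type 1 adjustments of the arcs. The remaining work is bookkeeping: verifying that the general position argument really does confine the family to codimension-one events of exactly the three listed types, ruling out simultaneous coincidences by a standard transversality perturbation, and checking that condition 3 in the definition of the elementary base point move, together with the framing in the braid move, is exactly what keeps each intermediate diagram gap linear. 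I expect this verification — coordinating the base point, the $x$-ordering of the gaps, and the height function simultaneously through each event — to be the most delicate part of the argument.
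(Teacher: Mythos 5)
You have the paper's key local observation --- a Reidemeister move whose over-strand is traversed first from the base point creates or destroys only non-gap crossings, and so leaves the gap homotopy representation unchanged --- but the step you defer as ``bookkeeping'' is exactly where the proof lives, and your sketch of that step does not work. In a generic movie you do not get to choose how each Reidemeister move sits relative to the base point: the isotopy may, for instance, perform an R2 poke whose under-strand is traversed first, creating two new gaps at once. Such an event changes the gap homotopy representation (it even changes the order $n$), so it is not a move of type 1; and your proposed factorization of it as ``a braid move carrying the relevant crossing to the axis, followed by a base point move converting it'' cannot be repaired. Braid moves are planar isotopies dragging the \emph{existing} gaps and base point along a pure framed braid, so they never change which crossings are gaps --- being a gap is the combinatorial condition that the undercrossing point is reached before its partner, not the geometric condition of lying on the x-axis, and your sketch conflates the two. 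An elementary base point move relates two base points on the \emph{same} underlying diagram, so it can toggle the gap status of crossings that already exist but can never create the crossings in the first place. Hence no post-processing applied after the bad event stays inside the three move types.

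The missing idea is a conjugation trick, and with it the movie becomes unnecessary. The paper's proof invokes the classical Reidemeister theorem to get a discrete sequence of moves between the underlying diagrams, and then, \emph{before} each move in the sequence, transports the base point to just before that move's top strand --- possible because braid moves give full control over how the gaps sit on the x-axis, which is what lets an arbitrary base point change be decomposed into elementary base point moves --- so that the Reidemeister move itself becomes a type 1 move; afterwards the base point is transported back, and it is these type 2 moves, not the Reidemeister move, that create or destroy gaps. If you insert this detour at every exceptional time of your movie your argument goes through, but at that point the movie is doing nothing except re-proving Reidemeister's theorem, and the proof collapses to the paper's shorter one.
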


\begin{proof}
First, observe that given any Reidemeister move, if the base point is placed just before the top strand in the move, then applying the move does not change the gap homotopy representation. Furthermore, gap linear knot diagrams modulo braid moves are the same as knot diagrams, and any base point move for a knot diagram can be placed in elementary form if we can control the way the gaps are placed on the x axis through the braid group. Therefore, for any Reidemeister move we wish to apply, we can use braid moves and elementary base point moves to place the base point in a location where the Reidemeister move does not change the gap homotopy representation. This allows us to get between any two equivalent knot diagrams with just these three kinds of moves. 
\end{proof}

\begin{dfn}
Two gap homotopy representations $G_1$ and $G_2$ are said to be connected by a projected base point move if there exist gap linear knot diagrams $L_1$ and $L_2$ which are related by an elementary base point move, such that $G_1$ is the gap homotopy representation of $L_1$, and $G_2$ is the gap homotopy representation of $L_2$.
\end{dfn}

\begin{dfn}
The pure framed braid group acts on gap homotopy representations in the same way it acts on gap linear knot diagrams. We say two gap homotopy representations are related by a braid move if one is obtained from the other by an application of an element of the pure framed braid group. 
\end{dfn}

\begin{lem}\label{gaphommov}
If two gap homotopy representations represent the same knot, then one is obtained from the other by a sequence of projected base point moves and braid moves. 
\end{lem}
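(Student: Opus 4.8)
The plan is to lift the statement from gap homotopy representations up to gap linear knot diagrams, apply Lemma \ref{lindiagmoves} there, and then project the resulting sequence of moves back down to gap homotopy representations. Given gap homotopy representations $G_1$ and $G_2$ representing the same knot, I would first invoke the conversion procedure described after the definition of a gap homotopy representation (which rests on Lemma \ref{gaphomotopy}) to produce gap linear knot diagrams $L_1$ and $L_2$ whose gap homotopy representations are exactly $G_1$ and $G_2$. Since each $L_i$ represents the same knot as $G_i$, and $G_1, G_2$ represent the same knot, the diagrams $L_1$ and $L_2$ represent the same knot.

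Next I would apply Lemma \ref{lindiagmoves}, obtaining a finite sequence of gap linear knot diagrams from $L_1$ to $L_2$ in which consecutive terms differ by a Reidemeister move that preserves the gap homotopy representation, an elementary base point move, or a braid move. The heart of the argument is to push this sequence through the map that assigns to each gap linear knot diagram its unique gap homotopy representation, and to verify the effect of each move type. A Reidemeister move of the first kind leaves the gap homotopy representation unchanged by definition, so such a step projects to the identity and can be deleted. An elementary base point move relates two gap linear knot diagrams by precisely the relation appearing in the definition of a projected base point move, so its image is a projected base point move. A braid move projects to a braid move, because the pure framed braid group action on gap homotopy representations was defined to mirror its action on gap linear knot diagrams. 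After deleting the trivial Reidemeister steps, the projected sequence is a sequence of projected base point moves and braid moves joining $G_1$ to $G_2$, as desired.

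The step I expect to be the main obstacle is confirming that braid moves are genuinely compatible with the projection: that applying a pure framed braid element to a gap linear knot diagram produces a diagram whose gap homotopy representation is exactly the braid-move image of the original, with no unintended change in either the combinatorial pairing data $p_k$ or the homotopy classes $\overline{p}_k$. Although this is built into the parallel definitions of the two braid actions, it requires checking that sliding and framing the gaps and base point along the strands commutes with extracting the fundamental-group words $\overline{p}_k$ relative to the reference paths of Figure \ref{paths}. A smaller point worth stating explicitly is the surjectivity of the conversion procedure onto gap homotopy representations, which guarantees the existence of the lifts $L_1$ and $L_2$ in the first place.
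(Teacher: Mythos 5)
Your proposal is correct and follows exactly the route the paper intends: the paper's own proof simply states that the lemma is ``an easy corollary of Lemma \ref{lindiagmoves},'' and your argument spells out that corollary---lifting to gap linear knot diagrams, applying Lemma \ref{lindiagmoves}, and projecting each of the three move types back down (trivial Reidemeister steps vanish, elementary base point moves become projected base point moves by definition, and braid moves project to braid moves by the parallel definitions of the two actions). Your additional care about the compatibility of the braid action with the projection is a reasonable elaboration of what the paper leaves implicit.
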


\begin{proof}
This is an easy corollary of Lemma \ref{lindiagmoves}. 
\end{proof}

\begin{lem}\label{basept}
If two gap homotopy representations $G_1$ and $G_2$ are connected by a projected base point move, then there are word sequences $w_1$ and $w_2$ representing $G_1$ and $G_2$ such that one of the following holds. 
\begin{itemize}
\item[1)] $w_2$ is obtained from $w_1$ by specifying a symbol $x_0^{\pm1}$ in $w_1$ and applying the following procedure. First, delete the specified $x_0^{\pm1}$ symbol. Then, increase every positive index by 1. Next, replace every $x_0$ with $x_1x_0$, and every $x_0^{-1}$ with $x_0^{-1}x_1^{-1}$. Finally, insert the symbol $]_1^{\mp}$ where the specified $x_0^{\pm1}$ symbol was, and put a $[_1$ at the very beginning. 

\item[2)] $w_2$ is obtained from $w_1$ by specifying a point in the sequence and applying the following procedure. First, let $k$ be the smallest positive integer for which $[_k$ does not appear before the specified point. For all $n \geq k$, increase every index $n$ by one. Then, insert $x_0[_k$ (resp. $[_kx_0^{-1}$) at the specified point, and append $]_k^+x_k$ (resp. $x_k^{-1}]_k^-$) at the end. Finally, for any symbol $s_i$ with index $i < k$, we replace it with the conjugation $x_ks_ix_k^{-1}$.
\end{itemize}
For example, $$[_1x_0x_1x_0]_1^+ \to [_1[_2]_1^- x_2x_1x_0]_2^+$$ or $$[_1x_1[_2x_2]_1^+]_2^+\to x_2[_1x_2^{-1}x_2x_1x_2^{-1}[_2x_2x_0^{-1}x_2^{-1}[_3x_3x_2]_1^+x_2^{-1}]_3^+x_2^{-1}]_2^-$$
\end{lem}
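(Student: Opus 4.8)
The plan is to work directly from the definition of a projected base point move: by the definition immediately preceding the statement, $G_1$ and $G_2$ are the gap homotopy representations of two gap linear knot diagrams $L_1$ and $L_2$ that differ by an elementary base point move. So I would fix a single knot diagram that is gap linear with respect to two base points $*_1$ and $*_2$, with exactly one crossing $c$ encountered as one travels from $*_1$ to $*_2$ in the positive direction, and recover $L_1,L_2$ by reading off each base point. Everything then reduces to tracking how the combinatorial data — the set of gaps, their linear order, the base-point arc, and the homotopy class of each inter-gap arc — transforms when the preferred base point is slid across $c$. Throughout, Lemma \ref{gaphomotopy} is the tool that lets me work purely combinatorially, since it guarantees the knot type depends only on the homotopy classes of the inter-gap arcs in the appropriate punctured planes.

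The first substantive step is to classify the crossing $c$. Because both readings are gap linear and condition (3) of the elementary base point move forbids the connecting arc from crossing a vertical line below any gap or base point, the local model at $c$ collapses to just two essentially distinct forms, distinguished by the height value at $c$ together with how the connecting arc sits relative to the base-point disk $B_0$ and the existing gaps. I claim these two possibilities are exactly cases (1) and (2): in the first, sliding the base point \emph{consumes} an existing loop of the path around $B_0$ — recorded by a symbol $x_0^{\pm1}$ — and promotes it to the new lowest-index gap, forcing every positive index to increase by one; in the second, a \emph{fresh} gap $B_k$ is born in the interior of the sequence, with a new strand emanating from the base point, where $k$ is dictated by how many $[_j$ already precede the chosen point. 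Establishing that these are the only two local types, with the stated directions and signs, is the key structural step.

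Having isolated the case, I would recompute the gap homotopy representation from scratch with respect to $*_2$, expressing each inter-gap arc as a reduced word in the standard generators and connecting paths fixed in Figure \ref{paths}. The substitutions in the two procedures are then precisely the change-of-basis formulas for $\pi_1$ of the punctured plane under (a) inserting the new puncture $B_k$ and (b) moving the base point across $c$. In case (1) the loop $x_0$ must be rewritten as $x_1x_0$ (and $x_0^{-1}$ as $x_0^{-1}x_1^{-1}$) because the path now also encircles the newly created $B_1$; in case (2) the arcs lying on the far side of the new puncture acquire a conjugation by $x_k$, and a terminal $x_k^{\pm1}$ is appended to record the return of the new strand to the base point. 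The sign on the new $]_1^{\mp}$ or $]_k^{\pm}$ is read off the orientation of $c$. I would confirm all of this against the worked example $[_1x_0x_1x_0]_1^+ \to [_1[_2]_1^- x_2x_1x_0]_2^+$ (which matches case (1) when the first $x_0$ is specified) and the longer case-(2) example.

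The main obstacle I anticipate is exactly this $\pi_1$ bookkeeping: verifying that the conjugations by $x_k$ and the $x_0 \to x_1x_0$ substitution are correct demands scrupulous attention to the chosen free-group generators, the fixed paths to the base point in Figure \ref{paths}, and the orientation conventions behind the $\pm$ signs on the $]$-symbols. By contrast, the reindexing of the $[$ and $]$ symbols and the verification that the output still satisfies the four defining conditions of a word sequence representation are routine once the local model at $c$ and the homotopy change-of-basis are pinned down. I would include the short example in full as a consistency check before stating the general computation.
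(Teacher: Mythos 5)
Your proposal follows the paper's own proof essentially verbatim: the paper likewise reduces to a single diagram that is gap linear with respect to both base points, splits into the same two cases according to whether the connecting arc passes under the unique crossing (producing a gap for the first base point, which yields move (1) by sliding a loop off the base point) or over it (producing a gap for the second base point, which yields move (2) by passing a strand under the base point), and justifies the substitution $x_0 \mapsto x_1 x_0$ and the conjugations by $x_k$ with exactly the loop-tracking bookkeeping you describe. If anything, your plan is more detailed than the paper's proof, which states the geometric picture informally and omits both the explicit $\pi_1$ change-of-basis verification and the consistency check against the worked examples.
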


\begin{proof}
The first move comes from the case where the path between the two base points has a gap with respect to the first base point. We can think of this move as taking a loop around the base point, the specified $x_0^{\pm 1}$ symbol, and sliding it off of the base point to the right, producing a gap. Any other loop around the base point will now wrap around the base point and the first gap, so we replace $x_0$ with $x_1x_0$. 

The second move comes from the case where the path between the two base points intersects a gap for the second base point. We can think of this move as passing a strand from the upper half plane under the base point so that a new gap is formed. The gap must then be moved through the upper half plane and placed in the correct location on the x axis. The process of doing this conjugates other loops by the loop around the new gap. As we need to rotate the new gap into the correct orientation, we need to insert a $x_k^{\pm 1}$ symbol next to the closing bracket for the new gap. Also the process of passing a strand under the base point creates a loop around the base point which is the reason for adding $x_0^{\pm 1}$ next to the opening bracket for the new gap. 
\end{proof}

\begin{lem}\label{braidmv}
The pure framed braid group is generated by two types of generators: twisting a strand by 360 degrees, and moving one strand around the back of the braid, in front of another strand, and back to its original position. \cite{braids}  These generators act on word sequences as follows. For the first kind of generator, we can take every symbol that does not have $i$ as its index, and conjugate by $x_i^{-1}$. For the second kind of generator, given $i < j$, we can take every symbol that does not have $i$ or $j$ as its index, and conjugate by $x_i^{-1}x_j^{-1}$, unless we are between $[_i$ and $[_j$, in which case we conjugate by $x_j^{-1}x_i^{-1}$
\end{lem}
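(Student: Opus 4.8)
The first assertion---that single-strand full twists together with the $A_{ij}$-type loops generate the pure framed braid group on $n+1$ strands---is a standard structural fact, so the plan is to cite \cite{braids} and to concentrate the work on the formulas for the action on word sequences. The conceptual framework is that, by Lemma \ref{gaphomotopy} and the discussion preceding it, the pure framed braid group acts on gap homotopy representations by dragging the connecting paths $\overline{p}_k$ along the moving strands through the mapping cylinder of the braid. Since each path is recorded, relative to the generators $x_0,\dots,x_n$ and the based paths fixed in Figure \ref{paths}, as a word in the free group $\pi_1(\R^2\setminus\bigcup_k B_k,*)$, the task reduces to fixing an explicit isotopy realizing each generator and reading off the induced transformation of the words.

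For the type-2 generator $A_{ij}$ (strand $j$ swung behind, across the front of strand $i$, and back, with $i<j$) the relevant motion is an honest mapping class of the punctured plane, so I would compute its effect directly via the Artin representation. Strands $k\neq i,j$ are untouched, so the symbols of index $i$ and of index $j$, which record the path's genuine passages through $B_i$ and $B_j$, are left fixed, while every other symbol is conjugated by one of $x_i^{-1}x_j^{-1}=(x_jx_i)^{-1}$ or $x_j^{-1}x_i^{-1}=(x_ix_j)^{-1}$. The whole content is then to decide which conjugator applies to a given symbol: a segment traversed after gap $i$ has been opened but before gap $j$ has been opened sees $B_i$ and $B_j$ in the reversed order and so takes the conjugator $x_j^{-1}x_i^{-1}$, and this is exactly the condition of lying between $[_i$ and $[_j$ in the word sequence; every other segment takes $x_i^{-1}x_j^{-1}$.

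For the type-1 generator (a $360^\circ$ twist of strand $i$) the motion is not a mapping class of the plane punctured at the centers but a rotation of the boundary circle $\partial B_i$, which drags the path endpoints lying on $\partial B_i$ once around $B_i$. I would track the $x_i^{\pm1}$ inserted at each junction where the journey meets $\partial B_i$, fixing the signs from the conventions of Figure \ref{paths}. The clean way to package the answer is the telescoping observation: conjugating every non-index-$i$ symbol by $x_i^{-1}$ means that across any maximal run of non-index-$i$ symbols the inserted $x_i$ and $x_i^{-1}$ cancel, so the net effect is merely to wrap each index-$i$ anchor ($x_i^{\pm1}$, $[_i$, $]_i^{\pm}$) as $x_i(\cdot)x_i^{-1}$, together with a global $x_i^{-1}(\cdot)x_i$ arising from the endpoints on the base disk $B_0$. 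This exhibits the stated operation as the purely local winding one expects, merely written in conjugation form, and I would separately confirm that the index-$i$ bracket symbols are fixed because the corresponding legs are attached to $\partial B_i$ and rotate rigidly with it.

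The step I expect to be the main obstacle is the order bookkeeping in the type-2 move: matching the geometric condition---on which side of the braided strands a given segment lies, equivalently which of $B_i,B_j$ the segment has already looped past---to the purely combinatorial condition ``occurs between $[_i$ and $[_j$,'' and pinning down the conjugator order ($x_i^{-1}x_j^{-1}$ versus $x_j^{-1}x_i^{-1}$) with the correct signs. Getting the twist sign right in type 1 relative to Figure \ref{paths} is a smaller instance of the same orientation bookkeeping; once these conventions are fixed, everything else reduces to the standard Artin computation together with the telescoping identity above.
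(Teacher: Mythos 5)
Your plan takes a genuinely different route from the paper. The paper disposes of both generators with a single construction: take a disk whose boundary passes over every $B_k$ of non-specified index and under the disks of specified index, and apply a full twist along a thin neighborhood of that boundary. This one twist both realizes the braid generator and produces the conjugations, because every arc piercing the spanning disk picks up the boundary word, and the order reversal between $[_i$ and $[_j$ is determined by where (at which height) a given arc crosses the disk. You instead propose a generator-by-generator computation via the Artin action plus endpoint bookkeeping; your type-1 analysis (rotation of $\partial B_i$ plus the telescoping cancellation) is essentially right, up to the sign conventions you explicitly defer.

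The type-2 analysis, however, has a genuine gap. The inference ``strands $k\neq i,j$ are untouched, so the symbols of index $i$ and of index $j$ are left fixed'' is backwards: the Artin/point-push computation you propose fixes the loops around punctures that the moving strand's trajectory does not encircle, and conjugates $x_i$ and $x_j$ (and the generators caught in between, by a commutator). That classical form and the lemma's form (fix the $i,j$ symbols, conjugate everything else by $x_i^{-1}x_j^{-1}$, resp.\ $x_j^{-1}x_i^{-1}$ between $[_i$ and $[_j$) differ exactly by a global conjugation, and identifying them is only legitimate because of the word-sequence equivalences of Lemma \ref{reidmv}: telescope the conjugators and delete the residual prefix and suffix, which lie outside their brackets. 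This reconciliation step is absent from your plan, and it is not a formality---the paper itself must spell out precisely this point later, in the proof of Theorem \ref{moves}, when comparing conjugation of a set of symbols with conjugation of its complement. Relatedly, your mechanism for the dichotomy ``between $[_i$ and $[_j$ or not'' cannot work as stated: a single free-group automorphism applied uniformly to every symbol can never yield position-dependent conjugators, since $x_i^{-1}x_j^{-1}$ and $x_j^{-1}x_i^{-1}$ differ by the non-central element $x_jx_ix_j^{-1}x_i^{-1}$. The position dependence must instead be extracted from the correction terms at the attachment points, i.e.\ from comparing each fixed reference path of Figure \ref{paths} with its image under the dragging isotopy---which is exactly what the paper's disk-twist picture yields for free, and what your sketch gestures at (``dragging the connecting paths'') but never pins down.
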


\begin{proof}
Take a disk closing a path that goes over every disk with a non specified index, and under every disk with a specified index. Applying a full twist along a thin neighborhood of the boundary of this disk gives the stated braid generators as well as the stated conjugations.
\end{proof}

\begin{lem}\label{reidmv}
If two word sequences represent the same gap homotopy representation, then one can be transformed to the other by applying the relations for words in a free group to the $x_n$ symbols, inserting or removing any amount of $x_n$ symbols outside of the brackets indexed by $n$, and inserting or removing any amount of $x_0$ symbols at the beginning of the word sequence.  
\end{lem}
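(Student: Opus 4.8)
The plan is to read a word sequence as a redundant presentation of the two ingredients of a gap homotopy representation. The bracket symbols $[_n$ and $]_n^{\pm}$, in their order and with their signs, encode exactly the combinatorial pairing data $p_k$, while the maximal runs of $x$-symbols between consecutive brackets encode, as words in the free group $F$ on the generators $\{x_i\}$, chosen representatives of the homotopy classes $\overline{p}_k$. I would first check that reading off the brackets inverts, at the combinatorial level, the recipe that turns a gap homotopy representation into a word sequence; since the three moves touch only $x$-symbols, this shows that two word sequences for the same representation have identical bracket sequences. They are therefore cut by their common brackets into corresponding segments $u_0,\dots,u_{2n}$ and $u_0',\dots,u_{2n}'$, and it suffices to connect $u_k$ to $u_k'$ by moves for each $k$.

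The crux, which I expect to be the main obstacle, is to determine for each segment which generators are ``active.'' I would prove that for $n\geq 1$ one has $n\in A_k$ if and only if the segment $u_k$ lies between the symbols $[_n$ and $]_n^{\pm}$, and then handle the base-point disk $B_0$ and its generator $x_0$ separately: because $B_0$ is where the knot closes up and the sequence both starts and ends on $\partial B_0$, an $x_0$ at the very beginning contributes no winding that the representation records, which is the source of the special third move. The difficulty is that $A_k$ is defined by a connectivity condition — whether $S\cap B_n$ is split after deleting the edges $((\varepsilon,0),(-\varepsilon,0))$ and $(p_k(0),p_k(1))$ from the cyclic graph $G$ — and this must be matched against the purely linear nesting of brackets, with the base-point edge $((\varepsilon,0),(-\varepsilon,0))$ forcing the anomalous behaviour of $x_0$.

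With this identification in hand, both directions follow from a standard fact about free groups. For the moves to preserve the representation: free reduction fixes each class tautologically; inserting or removing $x_n$ in a segment lying outside the brackets indexed by $n$ is legitimate because there $n\notin A_k$, so the loop about $B_n$ bounds the undeleted disk $B_n$ inside $\R^2\setminus\bigcup_{i\in A_k}B_i$ and is null-homotopic; and an $x_0$ at the very start is removable by the base-point argument above. For the converse, the class $\overline{p}_k$ is the image of $u_k$ under the retraction $F\to F(\{x_i\}_{i\in A_k})$ that kills the inactive generators, so $u_k$ and $u_k'$ have equal image. Deleting every inactive generator by the second and third moves, then freely reducing by the first, carries each of $u_k$ and $u_k'$ to the unique reduced word representing $\overline{p}_k$ in $F(\{x_i\}_{i\in A_k})$; composing one transformation with the inverse of the other connects $u_k$ to $u_k'$, and doing this for every $k$ connects the two word sequences.
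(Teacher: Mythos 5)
Your proposal is, in essence, the paper's own proof written out in full: the paper's entire argument is the single sentence that the lemma ``follows directly from the definition of a gap homotopy representation and the characterization of elements of a free group as reduced words,'' and your bracket decomposition, your identification of the active sets (for $n\geq 1$, $n\in A_k$ exactly when the run $u_k$ sits between $[_n$ and $]_n^{\pm}$), and your canonical-form argument via uniqueness of reduced words are precisely the details that sentence suppresses. That core is correct, and the crux you isolated is the right one.

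One claim, however, is false as stated, and it is worth flagging because the paper itself is loose on the same point: ``an $x_0$ at the very beginning contributes no winding that the representation records.'' Under the paper's definitions the base disk is always an obstacle: Lemma \ref{gaphomotopy} puts $0$ into every $A_k$ by fiat, and in the graph-theoretic definition the base edge $((\varepsilon,0),(-\varepsilon,0))$ is one of the two edges always removed, so $(\varepsilon,0)$ and $(-\varepsilon,0)$ always lie in different components of the cut cycle; hence $0\in A_k$ for all $k$, and in particular $A_0=\{0\}$. The class $\overline{p}_0$ is a rel-endpoints homotopy class in $\R^2\setminus B_0$, so prepending $x_0$ multiplies it by a generator of $\pi_1(\R^2\setminus B_0)\cong \Z$; starting on $\partial B_0$ does not kill this winding. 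So move 3 genuinely changes the gap homotopy representation; what is true is only that it preserves the knot type, because the new loop belongs to the highest strand of the gap-linear diagram, crosses over everything it meets, and can be contracted (contrast this with $x_0$ loops elsewhere in the word, which become clasps as in Lemma \ref{basept}). This error does not sink your proof of the stated implication, since the lemma asserts only one direction: two sequences with the same representation have equal projections $\overline{p}_k$ in each free group $F(\{x_i\}_{i\in A_k})$, so your moves of types 1 and 2 alone already carry both to the identical canonical word sequence, and stripping initial $x_0$'s from both (as your procedure does) strips the same power from each, so the sequences are still connected. But the soundness assertion for move 3 should be deleted or replaced by the statement that it preserves the knot type rather than the representation.
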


\begin{proof}
This follows directly from the definition of a gap homotopy representation and the characterization of elements of a free group as reduced words. 
\end{proof}

\begin{lem}
If two word sequences represent the same knot, then one can be transformed into the other by a combination of the moves described in Lemmas \ref{basept}, \ref{braidmv}, and \ref{reidmv}.
\end{lem}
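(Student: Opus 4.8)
The plan is to assemble this statement from the three translation lemmas together with the classification of gap homotopy moves in Lemma \ref{gaphommov}. Given word sequences $w$ and $w'$ representing the same knot, each determines a gap homotopy representation $G$ and $G'$, and since the underlying knots agree, $G$ and $G'$ represent the same knot. Lemma \ref{gaphommov} then supplies a finite chain of gap homotopy representations $G = H_0, H_1, \ldots, H_N = G'$ in which each consecutive pair is related either by a projected base point move or by a braid move. The goal is to realize this entire chain at the level of word sequences.

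First I would handle each individual move in the chain. For a projected base point move $H_{j-1} \to H_j$, Lemma \ref{basept} produces word sequences representing $H_{j-1}$ and $H_j$ that are related by one of the two explicit procedures listed there. For a braid move, Lemma \ref{braidmv} first decomposes the relevant pure framed braid into the two stated generators, and each generator acts on a word sequence representative by the prescribed conjugations, carrying a representative of $H_{j-1}$ to one of $H_j$. In either case one obtains word sequences $u_{j-1}$ and $u_j$ representing the endpoints of the $j$-th move and related by moves from Lemmas \ref{basept} and \ref{braidmv}.

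The key gluing step is that the representatives produced for consecutive moves need not agree: the word sequence output of the $j$-th move and the word sequence required as input to the $(j+1)$-th move both represent $H_j$ but may be distinct words. Here I would invoke Lemma \ref{reidmv}, which states precisely that any two word sequences for the same gap homotopy representation are connected by the free-group relations on the $x_n$ symbols, insertions and deletions of $x_n$ symbols outside the brackets indexed by $n$, and insertions and deletions of $x_0$ symbols at the start. Bridging each junction with such a sequence, and likewise bridging the arbitrary starting word $w$ to the first required representative and the final representative to $w'$, concatenates the whole chain into a single sequence of moves of the three allowed types.

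The main obstacle is exactly this bookkeeping of representatives: Lemmas \ref{basept} and \ref{braidmv} only guarantee the existence of particular word sequences realizing each move, not that the move applies to an arbitrary representative, so the argument hinges on Lemma \ref{reidmv} absorbing the full ambiguity in how a gap homotopy representation is presented as a word sequence. I would therefore take care to confirm that the moves of Lemma \ref{reidmv} generate all of this ambiguity, which holds because a word sequence records a gap homotopy representation only through the reduced words in the fundamental-group generators subject to the bracket structure, and Lemma \ref{reidmv} accounts for every way two such words can present the same homotopy classes.
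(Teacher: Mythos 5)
Your proposal is correct and follows the same route as the paper: the paper's own proof is a one-line reduction to Lemma \ref{gaphommov} together with the observation that Lemmas \ref{basept}, \ref{braidmv}, and \ref{reidmv} realize the corresponding moves on word sequences. Your version simply makes explicit the gluing step the paper leaves implicit, namely that Lemma \ref{reidmv} absorbs the ambiguity between the word-sequence representatives produced at consecutive stages of the chain.
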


\begin{proof}
Follows from Lemma \ref{gaphommov}, and the fact that the moves in the lemmas describe the necessary moves on gap homotopy representaions. 
\end{proof}

\begin{lem}\label{sideview}
Suppose we have a word sequence. Then the following procedure does not change the knot type. First, select some value of $k$, and find the first time a symbol of the form $x_k$ or $x_k^{-1}$ appears inside of the brackets $[_k, ]_k^{\pm}$. Delete the symbol and record the place where it was. Then do the following. 
\begin{itemize}
\item[1)] If the symbol was $x_k$, then increment all indices of $k$ by one, and all indices greater than $k$ by two. Then, replace $[_{k+1}$ with $[_k[_{k+1}[_{k+2}$, and insert $]_{k+2}^-]_k^+$ where the $x_k$ symbol used to be.
\item[2)] If the symbol was $x_k^{-1}$, then increment all indices $k$ by two, and increment all indices greater than $k$ by three. Replace $[_{k+2}$ with $[_k[_{k+1}]_k^-[_{k+2}[_{k+3}$ and insert $]_{k+1}^-]_{k+3}^+$ where the $x_k^{-1}$ symbol used to be. 
\end{itemize}
\end{lem}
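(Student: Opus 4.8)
The plan is to prove Lemma \ref{sideview} geometrically, by reading off the knot that each word sequence encodes in the side view and exhibiting an explicit isotopy between the two. Recall that via the gap homotopy representation a word sequence describes a knot whose diagram, viewed from the side, consists of downward fingers ending in clasps: the bracket pair $[_k,]_k^\pm$ records the clasp at the disk $B_k$ together with its sign, while an $x_k^{\pm 1}$ symbol records how a strand winds around $B_k$. The key observation is that an $x_k^{\pm 1}$ occurring \emph{between} its own brackets $[_k$ and $]_k^\pm$ is precisely the combinatorial shadow of the $k$-th clasp carrying a full twist (the ``$-1$ clasp with one full twist'' appearing in the classification of clasps in the proof of Theorem \ref{rep}); the sign of the symbol records the handedness of the twist. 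The whole content of the lemma is therefore that a full twist in a clasp may be traded for a small cluster of untwisted clasps at its base, which is exactly the move used in that proof.

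First I would localize the problem. Since we delete the \emph{first} $x_k^{\pm 1}$ appearing inside $[_k,]_k^\pm$, the portion of the finger between opening the gap at $B_k$ and this winding carries no prior winding around $B_k$, so the twist sits at the very base of the clasp and can be manipulated in a small neighborhood of $B_k$ without disturbing the rest of the diagram. Using the ordering of fingers established in Steps 3--4 of the proof of Theorem \ref{rep}, I would check that everything of index larger than $k$ lies on top of this finger, so the local isotopy neither creates nor destroys crossings with the rest of the knot.

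Next I would draw the two local pictures and read off the effect of untwisting. In the $x_k$ (positively wound) case, pulling the twist down to the base splits the single clasp into three nested gaps --- this is the replacement of $[_{k+1}$ by $[_k[_{k+1}[_{k+2}$ after reindexing --- and produces two new clasp closures whose signs I would verify to be $]_{k+2}^-$ and $]_k^+$, inserted where the symbol was. The $x_k^{-1}$ case is the mirror situation, and because the twist has the opposite handedness, unwinding it needs one extra intermediate clasp; this accounts for the heavier reindexing (indices equal to $k$ shifted by two, larger indices by three) and for the extra bracket $]_k^-$ inside $[_k[_{k+1}]_k^-[_{k+2}[_{k+3}$ together with the closures $]_{k+1}^-$ and $]_{k+3}^+$. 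In each case I would confirm that the inserted opening brackets fall in increasing order along the $x$-axis and that the resulting string still satisfies the four defining conditions of a word sequence (paired brackets, each $[_n$ unique and in order, and the index-nesting condition), so that it genuinely represents a gap homotopy representation; Lemma \ref{gaphomotopy} then guarantees that the represented knot type is unchanged.

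The main obstacle I expect is the sign and index bookkeeping rather than the topology: the isotopy itself is the familiar ``slide the twist into a clasp'' move, but matching it precisely to the prescribed reindexing --- in particular justifying the asymmetry between the two cases (two new clasps versus three, and a shift by two versus three) and pinning down each clasp sign and each bracket's placement in the sequence --- is the delicate part. I would handle the two cases separately and check each against its local picture.
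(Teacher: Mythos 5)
Your proposal rests on the identification of an $x_k^{\pm 1}$ occurring inside the brackets $[_k,\,]_k^{\pm}$ with ``the $k$-th clasp carrying a full twist,'' and that identification is wrong in general. The symbol $x_k^{\pm 1}$ can occur in \emph{any} path segment $\overline{p}_j$ that happens to fall between the opening and the closing of gap $k$; it records some strand of the knot---possibly a parametrically remote one---winding around the disk $B_k$, i.e.\ a loop of another piece of the knot around the finger of clasp $k$, not a twist of the clasp's own two strands. For instance, in $[_1[_2x_1]_2^+]_1^+$ the $x_1$ lies in the segment between $[_2$ and $]_2^+$: the strand winding around $B_1$ belongs to clasp $2$'s structure, and no isotopy supported near $B_1$ alone can treat it as a twist at the base of clasp $1$. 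The twisted negative clasp from Step 4 of the proof of Theorem \ref{rep} is at best the special case where the winding symbol sits adjacent to its own closure, and even there the conversion produces a single small negative clasp at the base---not the two new clasps (case 1) or three (case 2) that the lemma prescribes---so your local picture cannot reproduce the stated combinatorics.

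The paper's proof uses a different isotopy: take the loop around the gap and \emph{flip it up over the end of the finger}. Since the finger terminates in a clasp, the flipped loop must pass over the clasping strand, and this is what creates the two new gap--clasp pairs in case 1; in case 2 the flip leaves the clasps in the wrong vertical order relative to the decreasing-height convention, which is why an extra negative clasp (hence the heavier reindexing) is needed. This also explains a feature your localization argument cannot account for: the move is nonlocal in the word sequence---the new opening brackets are inserted next to $[_k$ while the new closing brackets $]^{\pm}$ are inserted where the $x_k^{\pm1}$ symbol was, because the new clasps join the (possibly distant) winding strand to the base of the gap. Your plan of confining the entire verification to a small neighborhood of $B_k$, with the claim that ``the twist sits at the very base of the clasp,'' would therefore fail at the first example in which the winding strand is not part of clasp $k$ itself, which is exactly the generality the lemma (and its iterated use in defining $\Sigma$) requires.
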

\begin{proof}
For the first case, we can take a loop around a gap, and flip it up over the gap to form two gaps. For the second case we can do a similar isotopy, but it adds an extra negative clasp to make everything lie in the correct order. (See Figure)
\end{proof}

\begin{figure}[h]
\centering
\includegraphics[scale = 0.7]{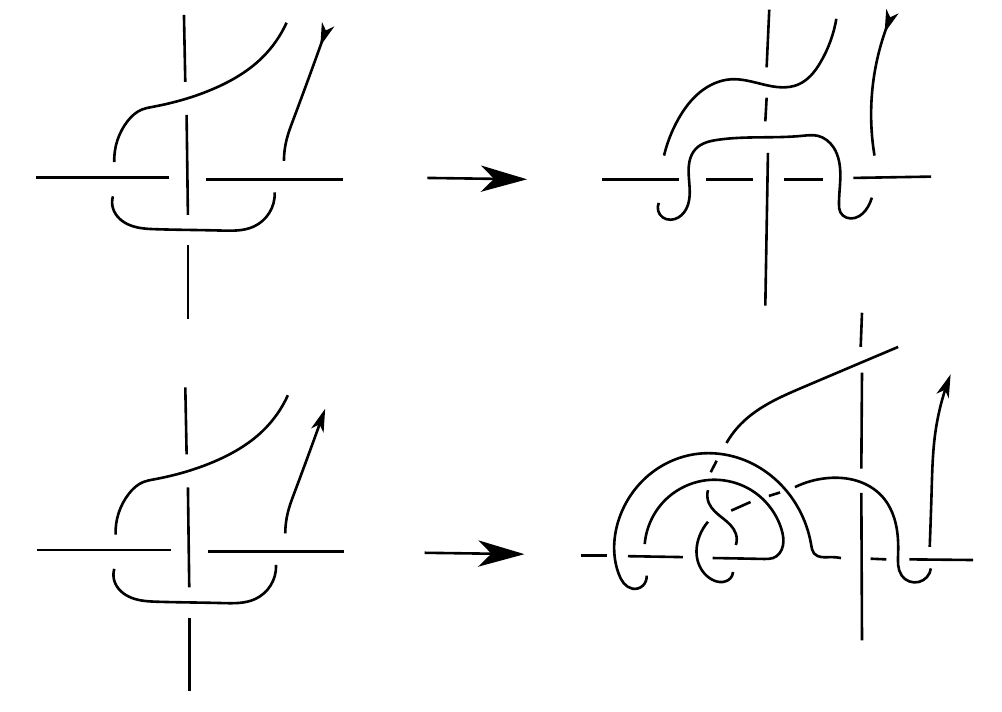}
\end{figure}

\begin{lem}
If we have a word sequence representation with $n$ as the largest index, then replacing an $x_0$ symbol with $x_1^{-1}...x_n^{-1}$ or replacing an $x_0^{-1}$ symbol with $x_n...x_1$ does not change the resulting knot type. 
\end{lem}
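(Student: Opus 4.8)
The plan is to argue geometrically, in the spirit of Lemmas \ref{sideview} and \ref{reidmv}, rather than by formal word manipulation. The symbol $x_0$ records a piece of the strand that winds once around the base-point disk $B_0$. Since $B_0$ is centered at the knot's base point and (by the setup of Lemma \ref{gaphomotopy}) contains only the single closure arc of the knot, this winding is a loop that encircles the base-point region rather than any clasp. The heart of the argument is the relation, valid on the projection sphere $S^2 = \R^2\cup\{\infty\}$, that a loop around $B_0$ equals, with reversed orientation, a loop around the complementary collection $B_1,\dots,B_n$: the two regions are complementary on $S^2$, and the base point $(n+1,0)$ together with the point $\infty$ lie on the $B_0$ side without contributing any disk. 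What makes this usable for a single strand of the realized knot is that the diagram lives on $S^2$, so the strand may be pushed across the point at infinity; this is precisely the extra freedom that distinguishes the present move from a mere planar homotopy, under which $x_0$ and $x_1^{-1}\cdots x_n^{-1}$ are genuinely distinct elements of the free group.

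Concretely, I would isolate the sub-arc of the strand carrying the chosen $x_0$ symbol and describe the isotopy that carries its encircling loop across the side of $S^2$ containing $(n+1,0)$ and $\infty$ but none of the disks $B_1,\dots,B_n$, until it instead encircles $B_1,\dots,B_n$. As the loop sweeps around that side, its orientation reverses, and the accumulated winding is recorded by the word $x_1^{-1}x_2^{-1}\cdots x_n^{-1}$; applying the same sweep to an $x_0^{-1}$ symbol yields the inverse word $x_n x_{n-1}\cdots x_1$, consistent with the stated replacements. As a sanity check I would note that this is compatible with Lemma \ref{reidmv}: at the very start of the word the loop around $B_0$ is trivial, and the sphere relation correspondingly makes $x_1^{-1}\cdots x_n^{-1}$ trivial there, so the two moves agree in that degenerate position.

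To confirm that the sweep is a genuine ambient isotopy and not merely a homotopy of the planar picture, I would appeal to Lemma \ref{gaphomotopy}: away from the disks the diagram carries a well-defined descending height, so once the swept strand is routed under (respectively over) the strands it passes, the height function at each newly created crossing is forced, and the resulting diagram is again gap linear and represents the same knot. The new crossings created along $B_1,\dots,B_n$ are exactly those recorded by the generators $x_1^{-1},\dots,x_n^{-1}$, and no crossing outside the swept region is disturbed, so the gap homotopy representation is altered only by the claimed substitution.

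The main obstacle I anticipate is the orientation-and-order bookkeeping: verifying that sweeping over the $\infty$ side of $S^2$ produces exactly the reversed word $x_1^{-1}\cdots x_n^{-1}$ rather than the opposite order or a conjugated variant, and checking that the forced height convention from Lemma \ref{gaphomotopy} assigns each new crossing the sign implicit in those generators. This is a finite diagrammatic verification, best carried out by tracking a single representative loop past each disk in turn, and it is the step where a figure analogous to the one accompanying Lemma \ref{sideview} would do most of the work.
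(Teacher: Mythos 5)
Your proposal is essentially the paper's own argument: the paper proves this lemma precisely by passing the loop around the base point over the point at infinity of the projection sphere, so that it becomes an oppositely oriented loop around all the other disks $B_1,\dots,B_n$. Your version simply fills in the details the paper leaves implicit (the orientation reversal, the generator ordering, and the forced heights via Lemma \ref{gaphomotopy}), and those details are handled correctly.
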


\begin{proof}
We can pass a loop around the base point over the point at infinity and we get a loop in the opposite direction which loops around everything else. 
\end{proof}

\begin{dfn}
Let $\scr{W}$ denote the set of all word sequence representations. Then we define a function $\Sigma: \scr{W}\to \scr{W}$ as follows. First, replace every $x_0$ with $x_1^{-1}...x_n^{-1}$ and every $x_0^{-1}$ with $x_n...x_1$. Then, delete any $x_n$ or $x_n^{-1}$ symbols that fail to lie within the brackets with their index. Finally, apply the procedure described in Lemma \ref{sideview} to every remaining $x$ symbol in order. 
\end{dfn}

\begin{lem}
The function $\Sigma$ preserves knot type, takes any word sequence to a word sequence with no $x$ symbols, and acts as the identity on word sequences with no $x$ symbols. 
\end{lem}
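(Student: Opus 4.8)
The statement bundles three claims, and I would dispatch them in increasing order of difficulty. The claim that $\Sigma$ acts as the identity on $x$-free word sequences is immediate: if a word sequence contains no $x$ symbols then in particular it contains no $x_0^{\pm 1}$, so the first step of $\Sigma$ does nothing; it contains no out-of-bracket $x$ symbols, so the second step does nothing; and it contains no $x$ symbols at all, so the third step does nothing. Hence $\Sigma$ fixes it.

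For the claim that $\Sigma$ preserves knot type I would simply observe that $\Sigma$ is a composite of three operations, each already known to preserve knot type. The first step is exactly the substitution $x_0 \mapsto x_1^{-1}\cdots x_n^{-1}$, $x_0^{-1}\mapsto x_n\cdots x_1$ whose invariance is the content of the (unlabelled) lemma immediately preceding the definition of $\Sigma$. The second step deletes $x$ symbols lying outside their own brackets, which by Lemma \ref{reidmv} leaves the underlying gap homotopy representation, and therefore the knot, unchanged. The third step is a sequence of applications of Lemma \ref{sideview}, each of which preserves the knot type by that lemma. A composite of knot-type-preserving moves preserves knot type.

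The substance is the claim that the output contains no $x$ symbols, and here I would first establish a structural normal form and then run a termination argument. After the first step there are no $x_0^{\pm1}$ symbols, so every $x$ symbol has index $\ge 1$. After the second step every surviving $x$ symbol lies inside its own matching pair $[_k,]_k^{\pm}$, since precisely the out-of-bracket symbols have been deleted; this is exactly the hypothesis needed to apply Lemma \ref{sideview}. The engine of the third step is then the observation, read off from both cases of that lemma, that a single application deletes exactly one $x$ symbol and inserts only bracket symbols $[$ and $]$, never a new $x$. Consequently the total number of $x$ symbols strictly decreases with each application, so the procedure terminates, and it can only terminate once no $x$ symbols remain.

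The one point that requires genuine care, and which I expect to be the main obstacle, is verifying that the ``every $x$ symbol lies inside its own brackets'' property is preserved by each application of Lemma \ref{sideview}, so that the procedure stays applicable until it exhausts all $x$ symbols. I would check this by a direct case analysis of the two reindexing rules: the bulk of the reindexing merely renames brackets uniformly (shifting the indices equal to $k$ and those greater than $k$), which does not change the position of any other $x_j$ symbol relative to its own pair $[_j,]_j$; the only genuinely local modification happens at the site of the processed symbol, where the replacement $[_{k+1}\mapsto [_k[_{k+1}[_{k+2}$ (resp. $[_{k+2}\mapsto [_k[_{k+1}]_k^-[_{k+2}[_{k+3}$) together with the inserted closing symbols is arranged so that any $x$ symbols formerly inside the processed pair remain inside their renamed pairs. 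Granting this invariant, the counting argument closes the proof; alternatively, if one prefers not to track the invariant through the restructuring, one can reinsert a Lemma \ref{reidmv} cleanup of out-of-bracket symbols between successive applications, which restores the normal form, still preserves the knot type, and again drives the $x$-count to zero.
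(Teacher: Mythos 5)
Your proof is correct and follows essentially the same route as the paper's, which simply asserts in two sentences that each operation preserves knot type, leaves $x$-free sequences untouched, and ultimately removes all $x$ symbols. Your version fills in the details the paper elides—in particular the termination count and the check that the ``inside its own brackets'' invariant survives each application of Lemma \ref{sideview}—but the underlying argument is the same.
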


\begin{proof}
Every operation used preserves the knot type and does not affect $x$ symbol free sequences. Furthermore, after applying all of these operations, all of the $x$ symbols must be removed. 
\end{proof}

\begin{lem}
A word sequence with no $x$-symbols represents the same knot as the corresponding signed chord diagram. 
\end{lem}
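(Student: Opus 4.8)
The plan is to bridge the two constructions through the explicit ``drawing on a line'' recipe of Section 3, showing that both the word sequence and the signed chord diagram name literally the same planar knot diagram. First I would pin down the combinatorial correspondence. An $x$-symbol-free word sequence of order $n$ consists only of the opening symbols $[_1,\dots,[_n$ (in this order) together with one closing symbol $]_k^{\pm}$ for each $k$. Reading the sequence from left to right and accounting for the implicit closing arc from $(-\varepsilon,0)$ back to $(\varepsilon,0)$ past the base point, these $2n$ endpoints acquire a cyclic order; pairing $[_k$ with $]_k^{\pm}$ defines a fixed-point-free involution $\tau$, and recording the superscript of each $]_k^{\pm}$ defines a sign $s(k)\in\{-1,+1\}$. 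This is exactly the data of a signed chord diagram $D$, and the assignment is a bijection onto signed chord diagrams with a chosen base point; the base-point independence lemma for $B(D)$ makes this independent of the choice, so ``the corresponding signed chord diagram'' is precisely this $D$.

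Next I would unwind the gap linear knot diagram that the word sequence names. Because there are no $x$ symbols, every connecting path $\overline{p}_k$ is represented by the empty word, so no strand winds around any disk $B_i$. Consequently the diagram built from this gap homotopy representation is exactly the one produced by the recipe of Section 3: one traverses the line from the base point with decreasing height, drawing a gap at each symbol $[_k$ (the first endpoint of chord $k$) and, at each $]_k^{\pm}$ (the second endpoint), a strand that dives under the lines already drawn and then comes up and over to clasp the matching gap with sign $s(k)$. In short, the word-sequence knot and the hand-drawn diagram of $D$ are the same planar diagram.

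It then remains to identify this diagram with a projection of $\Gamma(D)$, which I would do by appealing to the constructions of $B(D)$ and of clasp surgery directly. In $B(D)$ the bands are chords embedded in the fibers $D^2\times\{f(i)\}$ of the complementary solid torus of the standard unknot, stacked along the $S^1$ direction in the cyclic order with which $D$ is equipped. Viewing this picture ``from the side'' with the height function of the proof of Theorem \ref{rep} turns each band into a finger appearing in the left-to-right order of the $f(i)$, and the $s(k)$ clasp surgery of Definition \ref{surgery} converts the $k$-th band into a $\pm1$ clasp. Comparing with the previous paragraph, the gaps and fingers occur in the same order, each matched pair forms a clasp of the same sign, interleaved chords produce linked clasps while nested chords produce unlinked ones, and the over/under data agree; hence the two diagrams coincide up to planar isotopy, and the word-sequence knot is isotopic to $\Gamma(D)$.

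The main obstacle is the bookkeeping in this last step: one must check that the stacking order of clasps forced by the $S^1$-fibering of $B(D)$ genuinely matches the ``under everything already drawn, then up and over'' rule of the recipe, and that the clasp-sign conventions ($]_k^{+}$ against $s(k)=+1$) are consistent rather than reversed, which is where the choice of $\phi|_{\mathrm{int}(D^2\times S^1)}$ as orientation reversing enters. Both are routine once these orientations and the sign in Definition \ref{surgery} are tracked carefully, but they are precisely where an error would hide, so I would verify them against the small cases first --- $[_1]_1^{+}$, whose single isolated clasp is the unknot by Lemma \ref{moveone}, and $[_1[_2]_1^{+}]_2^{-}$, whose crossing $+$/$-$ chords give the figure eight --- before asserting the general identification.
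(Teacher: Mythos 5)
Your proposal is correct and follows essentially the same route as the paper: the crux in both is that, away from the gaps, the height along the word-sequence knot is monotonically decreasing, so the clasps stack in the same order as the fibers of the unknot-complement fibration in $B(D)$, which identifies the diagram with $\Gamma(D)$ by construction. Your version spells out the combinatorial bijection and the Section~3 drawing recipe as intermediate steps, and flags the orientation/sign bookkeeping that the paper's two-sentence proof leaves implicit, but the underlying argument is the same.
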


\begin{proof}
Due to the fact that, away from the gaps, the height of our knot is always decreasing, we see that the clasps produced from the gaps lie in decreasing order with respect to the fibration of the unknot compliment. This means we have the geometric realization of the corresponding signed chord diagram. 
\end{proof}

\begin{dfn}
If $D$ is a signed chord diagram, an index block of $D$ is defined to be a subdiagram $H$ equivalent to $\Sigma([_1x_1^{s_1}x_1^{s_2}...x_1^{s_n}]_1^{\pm})$ where $s_i$ is either $+1$ or $-1$. The subdiagram $H$ must have the property that the interval around $[_1$ containing all of the left endpoints of the chords from the $x_i$ symbols contains no chords in $D\setminus H$. Furthermore, we require that the right endpoints of each $x$ symbol are adjacent in $D$. The inside of an index block refers to the interval between $[_1$ and $]_1^{\pm}$. The outside refers to the complement of the inside. 
\end{dfn}

\begin{lem}\label{indexrep}
Let $D$ have disjoint index blocks $H_1,H_2,...,H_n$ for which some point on the boundary of $D$ lies outside of all of the index blocks. Then there is a word sequence $w$ so that $\Sigma(w)$ is $D$, and the chords in $H_k$ are precicely the chords produced from the symbols in $w$ of some index $i_k$. 
\end{lem}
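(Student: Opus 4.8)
The plan is to reverse-engineer $w$ from $D$ by collapsing each index block back into a single-index string of $x$-symbols, and then to check that $\Sigma$ re-expands these strings into exactly the blocks we began with. Since $D$ is a signed chord diagram, it already corresponds to some $x$-symbol-free word sequence $w_0$, and I will modify $w_0$ one block at a time.

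I would first use the definition of an index block directly: each $H_k$ is equivalent to $\Sigma(g_k)$, where $g_k = [_1 x_1^{s^{(k)}_1}\cdots x_1^{s^{(k)}_{m_k}}]_1^{\pm}$ is the single-index word determined by the signs of the crossing chords and the closing-bracket sign of $H_k$. The index-block conditions — that the interval at the opening bracket is free of all chords of $D\setminus H_k$, and that the right endpoints of the $x$-symbols are mutually adjacent — guarantee that $H_k$ occupies a localized, contiguous piece of $w_0$, namely the subword $\Sigma(g_k)$. I would then traverse $\partial D$ starting at the chosen point lying outside every block, which turns the cyclic order into an honest linear order and so gives a well-defined order in which opening brackets are encountered. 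Assigning the gadgets and the remaining bare chords distinct indices $1,2,\dots$ in this order, I form $w$ by substituting $g_k$ (carried at its assigned index $i_k$) for the subword of $w_0$ corresponding to $H_k$, leaving every other bracket pair intact. Legality of $w$ is then routine: bracket pairing and the no-repeat condition are inherited from the substitution, and indexing by first appearance of opening brackets is exactly what enforces conditions (3) and (4).

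Finally I would compute $\Sigma(w)$. By construction $w$ has no $x_0$ symbols and every $x_{i_k}^{\pm1}$ lies strictly between its matching brackets, so the first two stages of $\Sigma$ act trivially and only the Lemma \ref{sideview} expansion survives. Applied to the index-$i_k$ symbols, this expansion is, by the definition of an index block, precisely the one taking $g_k$ to $\Sigma(g_k)=H_k$; hence $\Sigma$ restores each collapsed block and $\Sigma(w)=D$. Tracking the lineage of each resulting chord back through the (index-incrementing) $\Sigma$ procedure then gives the asserted correspondence: the chords of $H_k$ are exactly those descended from the index-$i_k$ symbols of $w$.

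The main obstacle is \emph{locality}: I must ensure that expanding one gadget neither moves nor entangles the chords of another block, so that the separate expansions reassemble into $D$ and not a perturbation of it. This is exactly where the defining conditions on an index block do their work — clearance of the interval at the opening bracket and adjacency of the right endpoints confine each Lemma \ref{sideview} expansion to the region occupied by its own block. Combined with the disjointness of the blocks and the existence of a boundary point outside all of them, which together let me process the gadgets one at a time in a fixed order, this confinement is what upgrades the identification from $\Sigma(w)$ being equivalent to $D$ up to some rearrangement to the literal equality $\Sigma(w)=D$ with the desired index-to-block correspondence.
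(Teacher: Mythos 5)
Your overall strategy is the same as the paper's: take the point lying outside all the blocks as the base point, keep a bracket pair for every chord not in a block, collapse each block $H_k$ to single-index symbols $[_{i_k}$, $x_{i_k}^{\pm 1}$, $]_{i_k}^{\pm}$, and then check that $\Sigma$ re-expands these symbols to recover $D$. The paper compresses all of this into ``make chords and $x$ symbols in the obvious way,'' so your explicit verification that $\Sigma(w)=D$ and your attention to the locality of the Lemma \ref{sideview} expansions are filling in what the paper leaves implicit.

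However, one step as written is wrong: you claim the index-block conditions ``guarantee that $H_k$ occupies a localized, contiguous piece of $w_0$, namely the subword $\Sigma(g_k)$,'' and you then form $w$ by substituting the contiguous string $g_k$ for that subword. Index blocks are not contiguous in $w_0$ in general. The defining conditions only clean up the interval of left endpoints around $[_1$ and the small clusters of right endpoints at each $x$ symbol; between those clusters the chords of $H_k$ may cross chords of $D\setminus H_k$, so brackets of outside chords are interleaved with the brackets of $H_k$'s chords. This interleaving is the whole point of the lemma: it is invoked in the proof of Theorem \ref{moves} to realize braid moves, where the blocks' chords conjugate endpoints of chords \emph{outside} the blocks; even a single chord crossing some other chord is an index block whose two brackets are separated in $w_0$. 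If you insert $g_k$ as a consecutive string, then the index-$i_k$ symbols of $w$ are adjacent, and $\Sigma(w)$ produces a copy of $H_k$ whose chords cross nothing outside the block --- a different diagram from $D$ whenever $H_k$ crosses $D\setminus H_k$. The repair is the construction you gesture at in your opening and closing paragraphs, carried out position by position: place $[_{i_k}$ in the clean left-endpoint interval, place each $x_{i_k}^{s_j}$ at the position of the corresponding right-endpoint cluster, and place $]_{i_k}^{\pm}$ at the right endpoint of the main chord, so that the index-$i_k$ symbols are interleaved with the brackets of the other chords exactly as $H_k$ is interleaved with $D\setminus H_k$; then each Lemma \ref{sideview} expansion reinserts the chords of $H_k$ at their original positions, and $\Sigma(w)=D$ with the desired index-to-block correspondence.
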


\begin{proof}
We construct $w$ as follows. First, make the base point of the diagram the point that lies outside all of the index blocks. Then, we can make bracket symbols for all chords in the diagram except those in the index blocks. For the index blocks, we make chords and $x$ symbols in the obvious way. We then see that the resulting word sequence corresponds to our original diagram. 
\end{proof}

\begin{dfn}
Let $D$ be a signed chord diagram and $H$ an index block of $D$. Let $p$ be an endpoint of a chord of $D$, which could be in $H$ or not. The operation of conjugating $p$ by $H$ is the following. We modify $D$ by adding chords to $H$ to make a larger index block $H'$ with an $x_1$ to the left of $p$ and an $x_1^{-1}$ to the right of $p$, unless either $x$ symbol would be outside of $H$, in which case that $x$ symbol is not included.  \end{dfn}

\begin{dfn}
Two signed chord diagrams $D_1$ and $D_2$ are said to be related by a braid move if there are two disjoint index blocks $H_1$ and $H_2$ in $D_1$ with nonempty intersection of their outsides, and $H_2$ placed to the right of $H_1$ with respect to a point outside of both index blocks, such that $D_2$ is obtained by conjugating every endpoint of every chord in $H_1\cup H_2$ by $H_2$, and then by $H_1$. 
\end{dfn}

Braid moves of signed chord diagrams are typically extremely complicated. 

\begin{figure}[h]
\caption{Example moves of type 1, 2, and 3 respectively.}
\centering
\includegraphics[scale = 0.8]{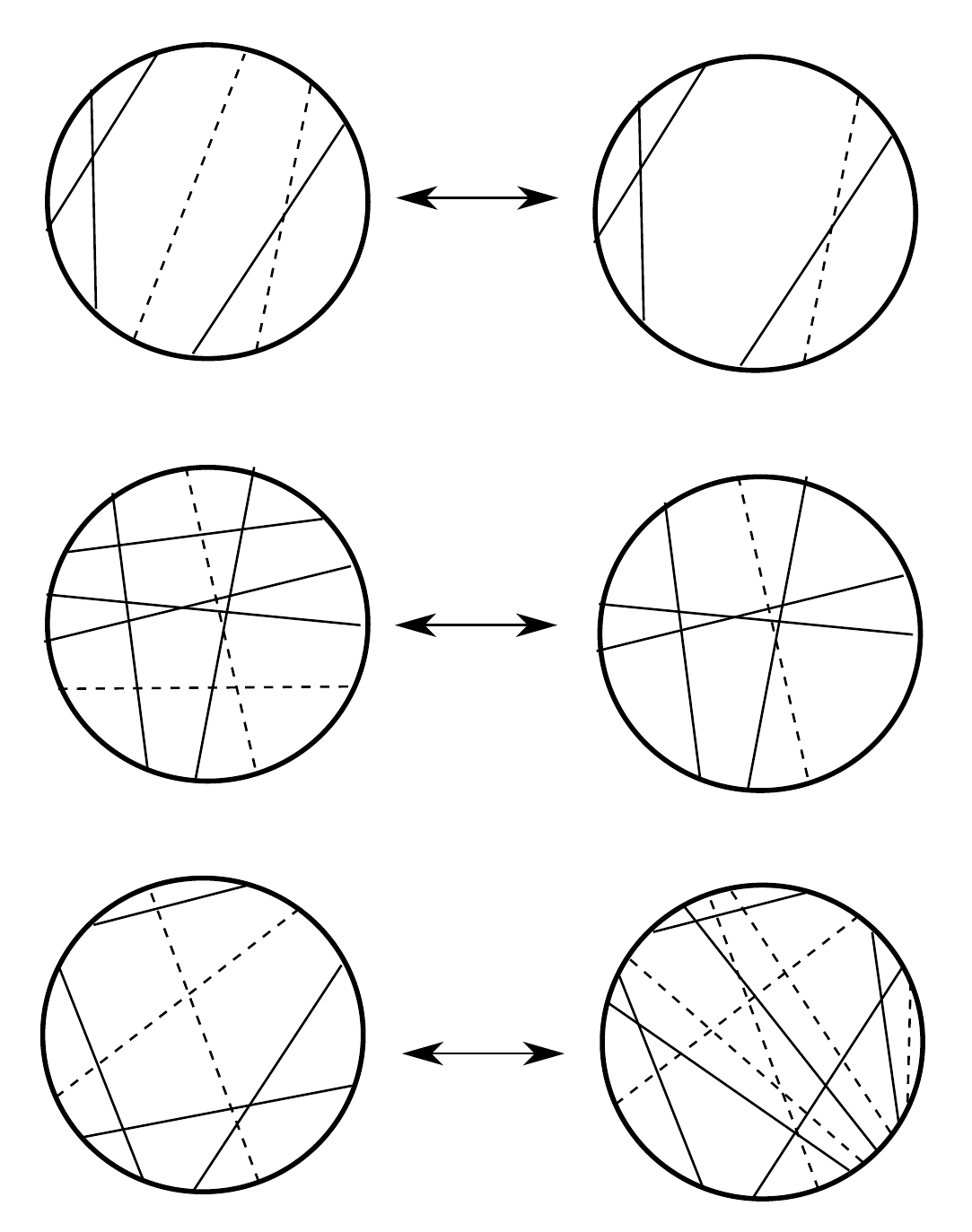}
\end{figure}

\begin{thm}\label{moves}
Two signed chord diagrams represent the same knot if and only if they are related by a sequence of moves of the following types:
\begin{itemize}
\item[1)] Insertion and deletion of isolated chords of either sign.
\item[2)] If two chords cross precisely the same set of chords, they have opposite sign, and they do not cross each other, then one can delete them both simultaneously. 
\item[3)] Remove a positive chord $x$ from the diagram after specifying a preferred side and a preferred endpoint of $x$. Then, mark each  chord which crossed $x$. Move along the preferred side of $x$ in the direction towards the preferred endpoint of $x$. Each time you reach an endpoint of a chord which crossed $x$, make two points, one just before the endpoint, and one just after the endpoint. We will label these points $p_1,...,p_{2n}$ in the order in which we have constructed them, where $n$ is the number of chords which crossed $x$. Then, at the preferred endpoint of $x$, we mark $2n$ adjacent points, $q_1,...,q_{2n}$, where labels increase in the direction toward the preferred side. Add chords between $q_i$ and $p_i$ of sign $(-1)^{i+1}$ for all $i = 1,...,2n$.
\item[4)] The braid moves described above.
\end{itemize}
\end{thm}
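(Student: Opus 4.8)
The plan is to prove both directions by passing through the word sequence formalism developed above, using the map $\Sigma: \scr{W}\to \scr{W}$ as a bridge between word sequences and signed chord diagrams.

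For the ``if'' direction it suffices to check that each of the four moves preserves the knot type, and each has already been verified above: move 1 is Lemma \ref{moveone}, move 3 is Lemma \ref{movethree}, and move 4 preserves knot type because every braid move on signed chord diagrams is realized as $\Sigma$ of a word-sequence braid move (Lemma \ref{braidmv}, via Lemma \ref{indexrep}), and both the word-sequence braid move and $\Sigma$ preserve knot type. For move 2, two chords that cross exactly the same set of other chords and do not cross each other can be slid within the diagram to be parallel and adjacent without crossing any additional chords, so their simultaneous deletion is an instance of Lemma \ref{pair}.

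For the ``only if'' direction, suppose $D_1$ and $D_2$ represent the same knot. Regard them as $x$-symbol-free word sequences $w_1,w_2$; since $\Sigma$ acts as the identity on such sequences, $\Sigma(w_i)=D_i$. Because $w_1$ and $w_2$ represent the same knot, the completeness lemma for word sequences connects $w_1$ to $w_2$ by a finite sequence of the moves of Lemmas \ref{basept}, \ref{braidmv}, and \ref{reidmv}. The theorem then reduces to the following intertwining claim: whenever $w\to w'$ is a single such word-sequence move, $\Sigma(w)$ and $\Sigma(w')$ are related by a sequence of the four signed chord diagram moves. Applying $\Sigma$ termwise along the connecting sequence and chaining the resulting moves yields a sequence of the four moves from $D_1$ to $D_2$.

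It remains to establish the intertwining claim by cases, and this is where the work lies. The braid-move case is essentially definitional: the generators of Lemma \ref{braidmv} conjugate symbols by words $x_i^{-1}$ and $x_i^{-1}x_j^{-1}$, and under $\Sigma$ each gap-together-with-its-$x$-word becomes an index block, so these conjugations become conjugations by index blocks, which is exactly the braid move on signed chord diagrams. For the moves of Lemma \ref{reidmv}: inserting or removing $x_n$ symbols outside their own brackets does not change $\Sigma$ at all, since $\Sigma$ deletes such symbols; a cancelling pair $x_n x_n^{-1}$ inside its brackets becomes, after the side-view procedure of Lemma \ref{sideview}, two chords of opposite sign crossing the same set and not crossing each other, removable by move 2; and the $x_0$ adjustments at the start reduce to insertion or deletion of isolated chords (move 1) after the conversion $x_0\to x_1^{-1}\cdots x_n^{-1}$ built into $\Sigma$. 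The main obstacle is the base-point-move case (Lemma \ref{basept}): here $\Sigma$ must first eliminate the newly introduced $x$-symbols through the side-view procedure of Lemma \ref{sideview}, so the induced change on diagrams is a composite rather than a single move. I expect that unwinding this composite identifies it with the hypermove of Definition \ref{move} (move 3), together with auxiliary isolated-chord insertions (move 1) and cancelling opposite-sign pairs (move 2) accounting for the index shifts and the $x_k^{\pm1}$ symbols attached to the new gaps. Verifying that these pieces assemble correctly, and in particular matching the alternating-sign output of Definition \ref{move} against repeated application of Lemma \ref{sideview}, is the technical heart of the argument.
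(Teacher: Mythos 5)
Your overall strategy coincides with the paper's: it too verifies that the four moves preserve knot type, lifts two diagrams representing the same knot to $x$-symbol-free word sequences fixed by $\Sigma$, invokes the completeness lemma for word-sequence moves, and then checks, move by move, that $\Sigma$ carries each word-sequence move to a composite of diagram moves. However, there are two genuine gaps. First, your justification of move 2 fails. You cannot ``slide'' the two chords into parallel adjacent position: moving a chord endpoint past another chord's endpoint toggles a crossing, so any slide preserving the crossing pattern is combinatorially trivial, and the hypotheses of move 2 do not force the configuration of Lemma \ref{pair}. Concretely, take endpoints in cyclic order $y_+^1, e^1, c^1, e^2, y_+^2, y_-^1, c^2, y_-^2$ with $y_+$ and $y_-$ of opposite sign: both cross precisely $\{c\}$ and do not cross each other, yet $e$ is nested inside $y_+$ but not inside $y_-$, so no reordering that preserves all crossings makes them an adjacent parallel pair. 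This is exactly why the paper does not reduce move 2 to Lemma \ref{pair}, but instead argues directly that a clasp is a full $360$ degree twist on the strands crossing it, so two opposite twists on the same set of strands cancel.

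Second, what you call the ``technical heart'' --- showing that the base point and conjugation cases resolve into moves 1 through 4 --- is precisely the bulk of the paper's proof, and your expectation of how it resolves is not accurate. The paper first manufactures derived moves out of types 1, 2, and 3: a type $2'$ move (insertion or deletion of a crossed $\pm$ pair carrying a small negative chord), a type $3'$ move (the analogue of move 3 for a \emph{negative} chord, whose new negative chords each carry a small negative chord), and then slide and push moves. These are indispensable: $\Sigma$ applied to an $x_k^{-1}$ symbol produces, via the second case of Lemma \ref{sideview}, a three-chord structure that needs a $2'$ move (not move 2) to cancel; and the first kind of move in Lemma \ref{basept} becomes a single type 3 \emph{or} type $3'$ move according to whether the deleted symbol was $x_0$ or $x_0^{-1}$, so move 3 together with moves 1 and 2 is not enough as you propose --- one must first show that $3'$ (and $2'$) are composites of moves 1, 2, 3. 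Also, inserting $x_0$ at the start of a word sequence is invisible to $\Sigma$ (the resulting $x_i^{-1}$ symbols lie outside their brackets and are deleted), rather than being an isolated-chord move. Your treatment of move 4 as definitional is correct and matches the paper, modulo one reconciliation the paper spells out: diagram braid moves conjugate the symbols of index $i$ and $j$ by $x_ix_j$, while Lemma \ref{braidmv} conjugates the complementary set of symbols, and these agree only after cancelling the $x_kx_k^{-1}$ pairs introduced by the conjugation.
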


\begin{proof}
First, we must verify that the moves preserve the knot type. The first three moves are the same as those described in Lemmas \ref{moveone}, \ref{pair}, and \ref{movethree}, except that the second kind of move allows the two chords to be nonadjacent. To see that move 2 does not modify the knot type, we see that a clasp can be thought of as a full 360 degree twist in the strands it crosses. If two chords induce opposite twists in the same set of strands, those twists can be cancelled. 

To see why braid moves preserve the knot type, we apply Lemma \ref{indexrep} to see that whenever there is a braid move from diagram $D_1$ to diagram $D_2$, there is a word sequence representation $w_1$ with $\Sigma(w_1) = D_1$ such that applying a braid move to $w_1$ yields a gap homotopy representation which has a word sequence representative $w_2$ with $\Sigma(w_2) = D_2$.  One should observe that in the definition of braid moves for chord diagrams, we conjugate each $i$ or $j$ indexed symbol by $x_ix_j$, but in the definition for braid moves in Lemma \ref{braidmv}, we conjugate everything that is not an $i$ or $j$ symbol. These different conjugations yield the same gap homotopy representation because when $w_1$ and $w_2$ differ by a move from Lemma \ref{braidmv}, we can cancel all adjacent $x_kx_k^{-1}$ pairs that were added in the conjugation, and this has the effect of reversing the set of symbols which are conjugated. Now, we have that every index $i$ or $j$ symbol is conjugated by $x_ix_j$. 

Now we must show that these moves can get us between any two representations of the same knot. In order to do this, we will first describe some moves on signed chord diagrams that can be built out of moves of types 1, 2, and 3. 

First, insert two parallel strands between a pair of points on the boundary of the diagram. Now, at each endpoint of the negative chord, put small chords with oppsite sign. Now, apply a type 3 move to the small positive chord. This will yield one positive chord which crosses the negative chord, and a negative chord which can be cancelled with our first positive chord. What remains is three chords, a positive and negative chord which cross, and a small negative chord at one end of the negative chord. We now see that this structure can be inserted between any pair of points on the boundary of the diagram. We will call the insertion or deletion of such a structure a type $2'$ move. 

Now, we also describe a version of type 3 moves that apply to negative chords. We will first describe the procedure by which the move is defined, and then we will describe how it can be built out of moves of types 1,2, and 3. 

Remove a negative chord $x$ from the diagram after specifying a preferred side and a preferred endpoint of $x$. Then, mark each chord which crossed $x$. Move along the preferred side of $x$ in the direction towards the preferred endpoint of $x$. Each time you reach an endpoint of a chord which crossed $x$, make two points, one just before the endpoint, and one just after the endpoint. We will label these points $p_1,...,p_{2n}$ in the order in which we have constructed them, where $n$ is the number of chords which crossed $x$. Then, at the preferred endpoint of $x$, we mark $2n$ adjacent points, $q_1,...,q_{2n}$, where labels increase in the direction toward the preferred side. Add chords between $q_i$ and $p_{2n+1-i}$ of sign $(-1)^{i+1}$ for all $i = 1,...,2n$. Finally, at one endpoint of each of the negative chords we just added, insert a small negative chord. We will call this move a type $3'$ move. Note that it does not matter which endpoint of the negative chords have the small negative chord because the small chord can be moved to either endpoint with type 1 and 2 moves.

\begin{figure}[h]
\caption{Example moves of type $2'$ and $3'$ respectively.}
\centering
\includegraphics[scale = 0.8]{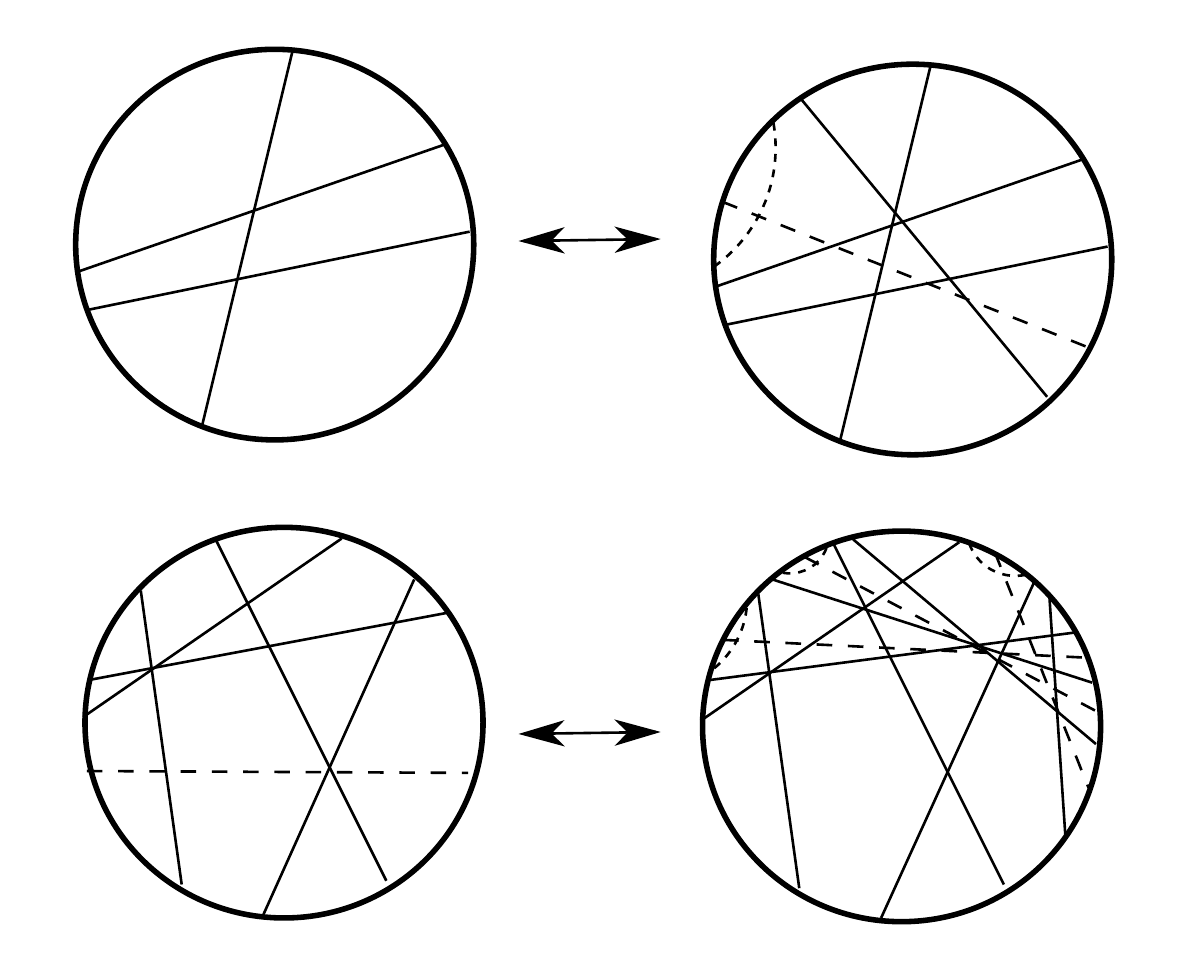}
\end{figure}

To see why type $3'$ moves come from moves of type 1, 2, and 3, First, take a diagram after a type $3'$ move has been applied to a negative chord. Insert a pair of parallel positive and negative chords where the negative chord used to be, and positioned so that the parallel chords do not cross any of the chords that were added in the type $3'$ move, and the positive one is closer to them. We now apply a type 3 move to the positive chord we just added with the same specified side and endpoint as the type $3'$ move. Now, we can apply a sequence of type 2 and type $2'$ moves to delete everything except the negative chord we added, thereby undoing the type $3'$ move. 

Let $w_1$ and $w_2$ be two word sequences which are related by a move from one of the lemmas \ref{basept}, \ref{braidmv}, and \ref{reidmv}. We wish to show that $\Sigma(w_1)$ and $\Sigma(w_2)$ are related by a sequence of moves of types 1,2,3, and 4. We will systematically go through every possible move to show that they can be built out of the moves we have defined.

First, we consider the moves from Lemma \ref{reidmv}. These moves consist of either deleting an $x$ symbol that lies in the front of the sequence or not inside of the brackets for its index, or of applying free group moves to the words. Deleting $x$ symbols as described has no effect on $\Sigma$ because we do this in the definition of $\Sigma$. If we insert or delete a pair of cancelling letters, then this corresponds to doing something more nontrivial. One symbol will have the effect of inserting two parallel chords of opposite sign which straddle the chord coming from the gap of its index, and the other will have a positive and negative chord which cross each other, straddle the chord for their index, and has a small negative chord at the end of the main negative chord. We can then delete all of these chords with one type 2 move and one type $2'$ move.  

\begin{figure}[h]
\caption{A move from Lemma \ref{reidmv}. $\Sigma([_1]_1^+)$ and $\Sigma([_1 x_1x_1^{-1}]_1^+)$}
\centering
\includegraphics[scale = 0.7]{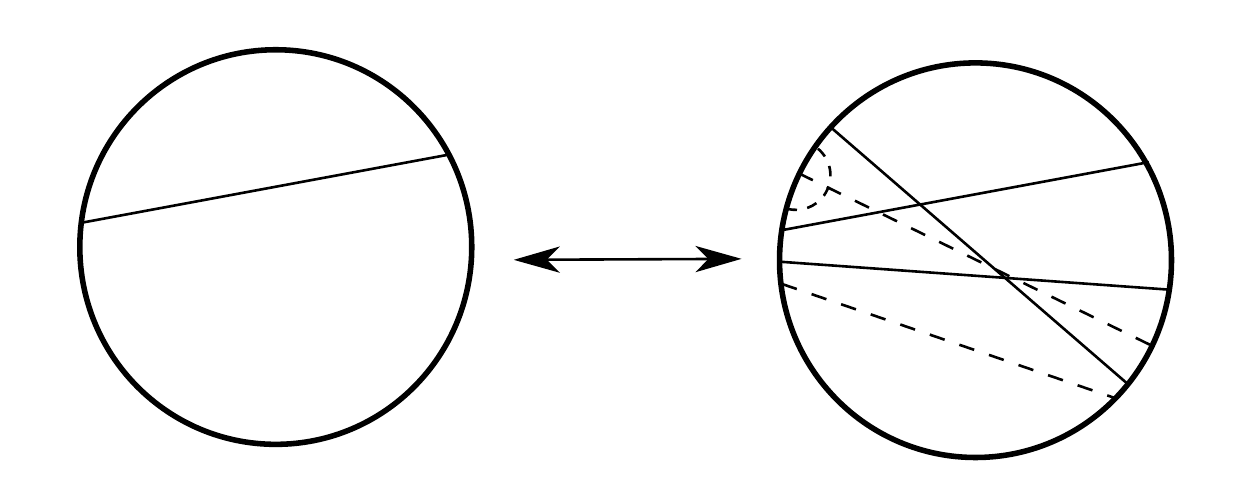}
\end{figure}

Now we consider the moves from Lemma \ref{basept}.  If $w_1$ and $w_2$ differ by the first kind of move, then $\Sigma (w_1)$ and $\Sigma (w_2)$ differ as signed chord diagrams by a single type 3 or type $3'$ move. The chords for the move come from the sequence of symbols produced from the $x_0$ symbol when we apply $\Sigma$. 

\begin{figure}[h]
\caption{A Lemma \ref{basept} move (first kind).  $\Sigma([_1[_2x_0]_1^+]_2^+)$ and $\Sigma([_1[_2[_3]_1^-]_2^+]_3^+)$}
\centering
\includegraphics[scale = 0.56]{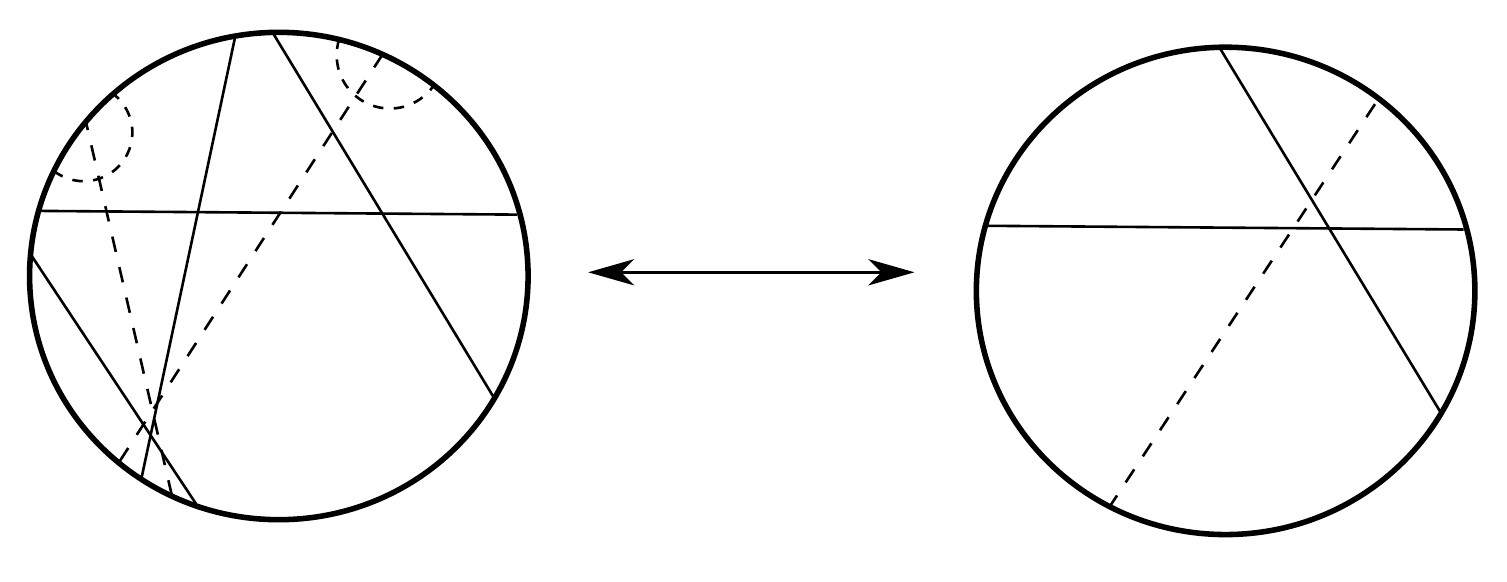}
\end{figure}

The second kind of move from Lemma \ref{basept} and the move from Lemma \ref{braidmv} both involve repeatedly conjugating certain symbols. The effect this has on $\Sigma$ is to add many alternating chords which group together to form potential type 3 and type $3'$ moves. 

\begin{figure}[h]
\caption{The effect of repeated conjugation.}
\centering
\includegraphics[scale = 0.6]{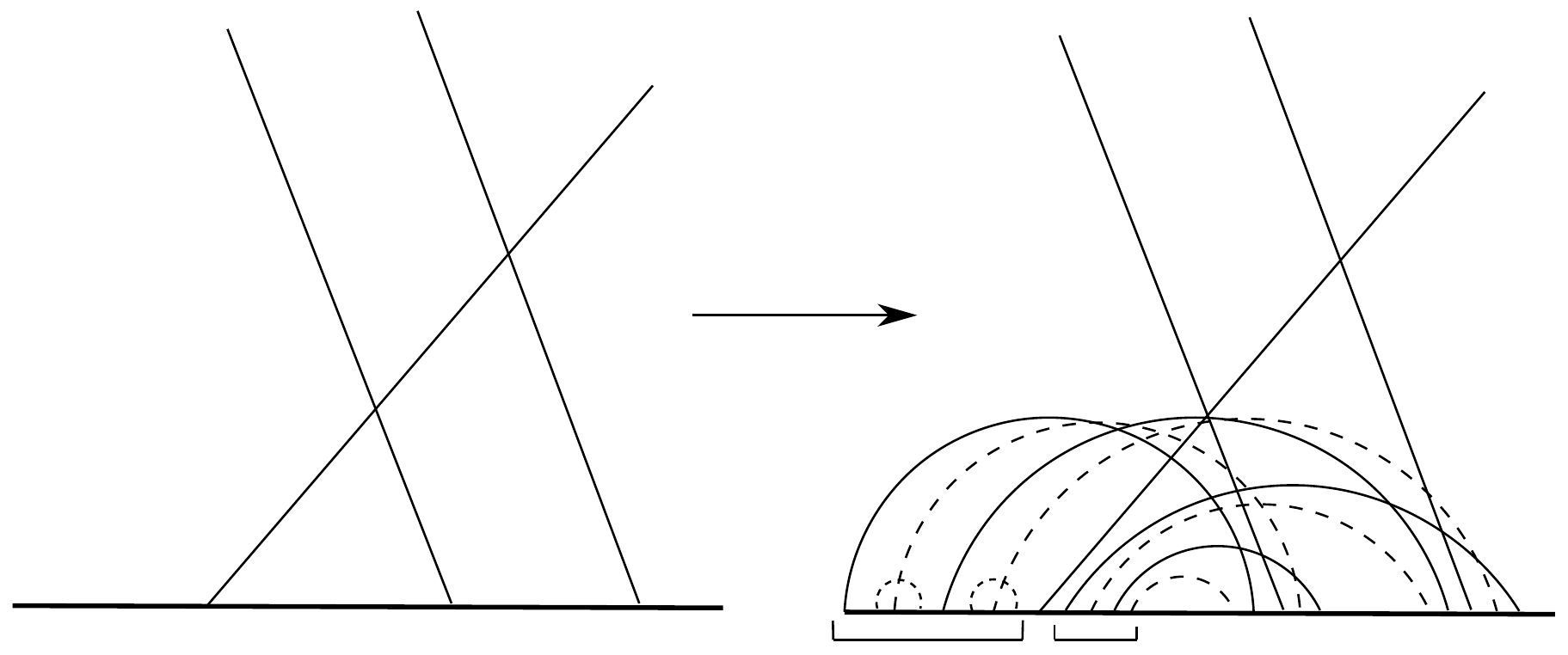}
\end{figure}

If $w_1$ and $w_2$ differ by the second kind of move in Lemma \ref{basept}, then the repeated conjugation gives us a type 3 and a type $3'$ move, and the $x_0$ symbol gives us a third type 3 or $3'$ move. After performing all of these moves, the remaining chords will cancel with type 2 and type $2'$ moves, and this reduces us back to the original diagram. 

In particular, we conjugate every symbol with index less than $k$ by $x_k$. This produces the chords for a type $3'$ move based just before $x_0[_k$ or $[_kx_0^{-1}$ and chords for a type 3 move based just after $x_0[_k$ or $[_kx_0^{-1}$. Also, the $x_0$ or $x_0^{-1}$ symbol produces chords for a type 3 or $3'$ move based where the symbol was located. All three of these moves, when applied, will produce chords which are parallel to the $k$-chord, except in the case when we have an $x_0^{-1}$ symbol, which gives a positive chord which crosses the $k$-chord. The only thing potentially preventing these chords from being adjacent to the $k$-chord is the chords from the $x_k^{-1}$ symbol at the end.  Thus, the chords from the type 3 and $3'$ moves from conjugation will cancel with each other by a type 2 move. If we had a $x_0$ and $x_k$ symbol, then the negative chord from the type $3'$ move generated by the $x_0$ symbol cancels with the $k$-chord, and we are done. If there was a $x_0^{-1}$ and $x_k^{-1}$ symbol, then the chords from the $x_k^{-1}$ symbol look like the chords from a type $2'$ move, except that they straddle the first endpoint of the $k$-chord, and the type 3 move gives us a positive chord which crosses the $k$-chord and the chords from the $x_k^{-1}$ symbol. At this point, a type 2 move cancels the $k$-chord with the positive chord from $x_k^{-1}$, and a type $2'$ move cancels the negative chords from $x_k^{-1}$ with the positive chord from the type 3 move from the $x_0^{-1}$ symbol. This takes us back to the original diagram.

Before we deal with the final case of Lemma \ref{braidmv} moves, we must develop even more moves which can be built out of type 1, 2, and 3 moves. We will call these moves slide moves and push moves respectively. Slide moves occur when we take a pair of chords which would cancel by a type 2 or type $2'$ move, except they straddle a collection of chords which all go to one side of the almost cancelable pair. We then apply a type 3 and $3'$ move to the chords of the almost cancelable pair, and then simplify the resulting diagram with several type 2 and $2'$ moves.  

\begin{figure}[h]
\caption{Example slide and push moves.}
\centering
\includegraphics[scale = 0.5]{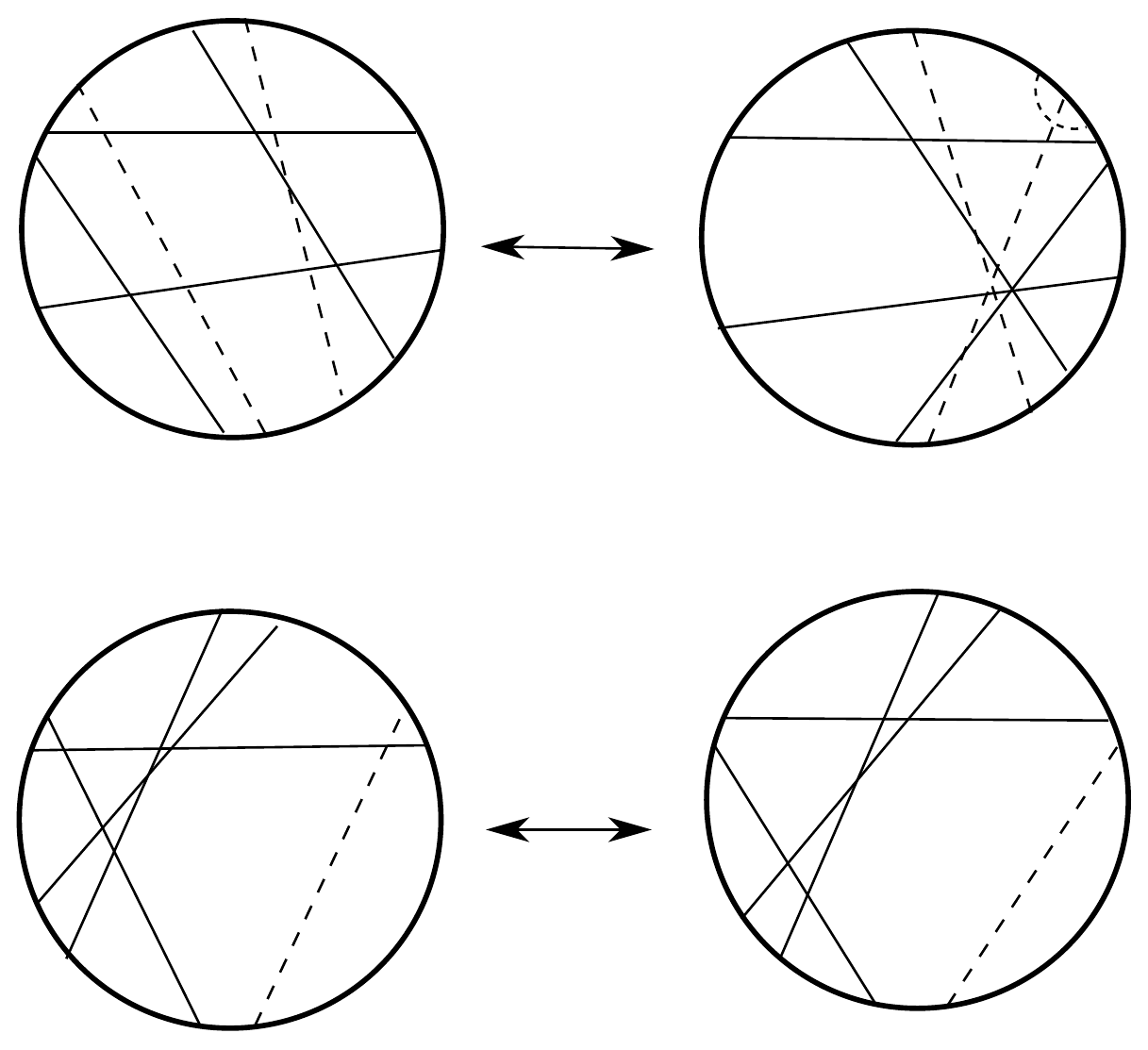}
\end{figure}

A push move is a special combination of two slide moves, which has the net effect of moving a chord inside of an almost cancellable pair. One slide move moves the pair away from the chord, and one moves it back. 

One should be careful when applying slide and push moves because sometimes other chords get in the way of the cancellations. We just use the terms ``slide'' and ``push'' as shorthand to describe the general technique. 

We now consider moves from Lemma \ref{braidmv}.

In the case of the first kind of move which only applies to one index, we have that every $x_i$ symbol is conjugated by $x_i$ which leaves it unchanged. After we delete all $x_i$ symbols that lie outside of their index, the result is that the only change from the original is that $[_i$ has an $x_i^{-1}$ symbol inserted after it and $]_i^{\pm}$ has an $x_i$ symbol inserted before it. This results in a set of chords in the diagram which can be transformed to a negative chord over the $[_i$ endpoint and all initial endpoints of chords from the $x_i$ symbols, and also a small positive chord around the $]_i^{\pm}$ endpoint. Now, we can repeatedly apply push moves to the negative chord to make it no longer cross any of the chords from the $x_i$ symbols. Finally, we can cancel the small positive and small negative chords with a type 2 move. 

We do not yet have a way to construct the second kind of moves from Lemma \ref{braidmv} with type 1, 2, and 3 moves. Therefore, we have invented braid moves to cover this case. Braid moves are precicely the transformations of $\Sigma(D)$ that can occur by transforming $D$ by the second kind of braid group generator in Lemma \ref{braidmv}.

\end{proof}

\begin{cnj}
Braid moves can be built from moves of type 1, 2, and 3.
\end{cnj}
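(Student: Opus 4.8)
The plan is to reduce the second kind of braid generator from Lemma~\ref{braidmv} --- the only move not yet built from types 1, 2, and 3 --- to the \emph{full framed twist of a contiguous block of gaps}, and then to realize every such block twist directly by push moves. Recall that the first kind of generator in Lemma~\ref{braidmv} is exactly the full twist of a single strand, and the proof of Theorem~\ref{moves} already expresses it using types 1, 2, and 3: the conjugation inserts an $x_i^{-1}$ after $[_i$ and an $x_i$ before $]_i^{\pm}$, which after applying $\Sigma$ becomes a negative chord lying over the opening endpoint and all the left endpoints coming from the $x_i$ symbols, together with a small positive chord, and these are cleared by repeated push moves followed by a type 2 move. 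The single gap is the size-one case of a block, so the whole question is whether this push-move cleanup survives when the block contains several gaps.

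The reduction itself is a purely group-theoretic identity in the pure framed braid group. Writing $\Delta_{[a,b]}$ for the full framed twist of the contiguous block of strands $a, a+1, \dots, b$ (which makes sense because gaps carry consecutive indices and sit in order on the $x$-axis, so an index interval is a geometric block), I would use the nested-block expression
$$ A_{ij} = \Delta_{[i,j]}\,\Delta_{[i+1,j]}^{-1}\,\Delta_{[i,j-1]}^{-1}\,\Delta_{[i+1,j-1]}. $$
A quick sanity check in the abelianization confirms the shape: the block twist $\Delta_{[a,b]}$ contributes $+1$ to the linking of every pair inside $[a,b]$ and $+1$ to each framing in $[a,b]$, and in the alternating combination above every framing cancels while the only surviving pair linking is $(i,j)$, with coefficient $+1$ --- exactly the profile of $A_{ij}$. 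Since products of operations realizable by types 1, 2, and 3 are again realizable, establishing this identity (in the genuine nonabelian group, with the in-front/behind convention and the ordering of the overlapping factors $\Delta_{[i+1,j]}$ and $\Delta_{[i,j-1]}$ matched to the generator of Lemma~\ref{braidmv}) reduces the conjecture to the single claim that each $\Delta_{[a,b]}$ is realizable by moves 1, 2, and 3.

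To prove the block-twist claim I would run the first-kind argument verbatim but with the block's boundary loop $x_a x_{a+1}\cdots x_b$ in place of the single loop $x_i$: the twist conjugates everything outside the block by this boundary word and leaves the block internally twisted, so after applying $\Sigma$ one again obtains a single negative chord passing over the opening endpoints and the induced left endpoints of the whole block, plus a small positive chord, and one attempts to slide this negative chord off the block by push moves and cancel the remainder by type 2 and $2'$ moves. Alternatively one might induct on block size, but expressing a block twist through smaller blocks reintroduces the $A_{ij}$ and risks circularity, so the direct push-move route seems safer.

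The main obstacle is exactly the step the author flags as ``extremely complicated'': verifying that the negative chord produced by a block twist can always be pushed free of the block's internal chords. For a single gap there is nothing inside to obstruct the push, but for a block the interior carries its own crossing chords, and a push move can stall when other chords get in the way of the cancellations. Making this rigorous requires either a canonical ordering of slide and push moves that is guaranteed to clear the interior, or an argument that any such obstruction can be routed around with type $2'$ and $3'$ moves. Because of this --- and because it is conceivable the conjecture fails --- I would first verify the smallest nontrivial case (two gaps with a handful of crossing chords) by explicit diagram manipulation, ideally by computer, both to pin down the sign conventions in the identity above and to test whether the block-twist cleanup genuinely terminates, before attempting the general argument.
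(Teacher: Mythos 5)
First, a point of context: the paper does not prove this statement --- it is posed as an open conjecture, and the proof of Theorem \ref{moves} states outright that the author does ``not yet have a way to construct the second kind of moves from Lemma \ref{braidmv} with type 1, 2, and 3 moves.'' Your proposal therefore has to stand on its own, and as written it is a reduction plan, not a proof; the decisive step is left open, as you yourself acknowledge. There are two gaps. The lesser one: the identity $A_{ij} = \Delta_{[i,j]}\,\Delta_{[i+1,j]}^{-1}\,\Delta_{[i,j-1]}^{-1}\,\Delta_{[i+1,j-1]}$ is false as an exact identity in the pure framed braid group; it holds only in the abelianization, which is all your sanity check inspects. Already in $P_3$, using the central element $\Delta_{[1,3]}^2 = A_{12}A_{13}A_{23}$, your product equals $A_{12}A_{13}A_{23}\cdot A_{23}^{-1}\cdot A_{12}^{-1} = A_{12}A_{13}A_{12}^{-1}$, a proper conjugate of $A_{13}$, and no reordering of the four factors repairs this, since $A_{13}$ commutes with neither $A_{12}$ nor $A_{23}$. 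This gap is plausibly repairable: the braid elements whose action on diagrams is realizable by moves 1, 2, 3 form a subgroup (the moves are invertible and composable), adjacent generators $A_{i,i+1}$ are exactly framed block twists times central framing twists, and an induction on $j-i$ can absorb the conjugating elements, which have smaller span. But that argument must actually be supplied; it cannot be dismissed as a choice of convention.

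The serious gap is the block-twist claim itself. The paper's treatment of the first-kind generator works precisely because a single gap has no interior: the conjugation inserts one negative chord and one small positive chord, and nothing obstructs the push moves. For a block $[a,b]$ with $b>a$, the negative chord produced by the boundary word $x_a\cdots x_b$ passes over the block's internal chords, which cross one another and the chords entering the block from outside; this is exactly the situation in which, as the paper warns, slide and push moves ``can stall when other chords get in the way of the cancellations.'' You name this as the main obstacle and defer it to a hoped-for canonical ordering of pushes, or to computer experiment. But this step is the entire content of the conjecture: conversely, realizability of all second-kind generators (together with the first kind) would give realizability of every block twist, so the two problems are essentially equivalent rather than one being a stepping stone to the other. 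What you have is a potentially useful reformulation --- block twists are more symmetric targets than band generators, and the subgroup framing is clean --- but the conjecture remains exactly as open after your argument as before it.
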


\nocite{*}

\bibliography{Refrences}{}
\bibliographystyle{plain}

\end{document}